\definecolor{darkgreen}{HTML}{336633}
\definecolor{darkred}{HTML}{993333}
\newcommand{\arxiv}[1]{{\tt arXiv:#1}}
\def\E{{\mathtt{E}}}
\def\D{{\mathtt{D}}}
\def\LC{{\mathcal L}}
\def\LL{{\mathbb L}}
\def\bd#1{\text{\boldmath${#1}$}}
\newcommand{\jontodo}{\todo[inline,color=green!20]}
\newcommand{\alextodo}{\todo[inline,color=red!20]}
\def\Sym{\operatorname{Sym}}
\def\SYM{\operatorname{SYM}}
\newtheorem{theorem}{Theorem}[section]
\newtheorem{lemma}[theorem]{Lemma}
\newtheorem{proposition}[theorem]{Proposition}
\newtheorem{corollary}[theorem]{Corollary} 
\theoremstyle{definition}  
\newtheorem{definition}[theorem]{Definition}
\newtheorem{example}[theorem]{Example}
\newtheorem{remark}[theorem]{Remark}
\def\bi{\text{\boldmath$i$}}
\def\bj{\text{\boldmath$j$}}
\newcommand{\sqbinom}[2]{\genfrac{[}{]}{0pt}{}{#1}{#2}}
\def\theequation{\arabic{section}.\arabic{equation}}
\def\ii{\text{\i}}
\def\jj{\text{\j}}
\def\SCat{\mathcal{SC}at}
\def\SCAT{\mathfrak{SCat}}
\def\GSCat{\mathcal{GSC}at}
\def\GSCAT{\mathfrak{GSCat}}
\def\piSCat{\Pi\text{-}\mathcal{SC}at}
\def\piSCAT{\Pi\text{-}\mathfrak{SCat}}
\def\qpiGSCat{(Q,\Pi)\text{-}\mathcal{GSC}at}
\def\qpiGSCAT{(Q,\Pi)\text{-}\mathfrak{GSCat}}
\def\qpiCat{(Q,\Pi)\text{-}\mathcal{C}at}
\def\qpiCAT{(Q,\Pi)\text{-}\mathfrak{Cat}}
\def\piTSCat{\Pi\text{-2-}\mathcal{SC}at}
\def\piTSCAT{\Pi\text{-2-}\mathfrak{SCat}}
\def\Cat{\mathcal{C}at}
\def\CAT{\mathfrak{Cat}}
\def\piCat{\Pi\text{-}\mathcal{C}at}
\def\piCAT{\Pi\text{-}\mathfrak{Cat}}
\def\piTCat{\Pi\text{-2-}\mathcal{C}at}
\def\piTCAT{\Pi\text{-2-}\mathfrak{Cat}}
\def\SVec{\underline{\mathcal{SV}ec}}
\def\GSVec{\underline{\mathcal{GSV}ec}}
\newcommand{\GSMod}{\text{-}\mathcal{GS\!M}od}
\newcommand{\GSProj}{\text{-}\mathcal{GS\!P}roj}
\def\deg{\operatorname{deg}}
\def\ob{\operatorname{ob}}
\newcommand{\End}{\operatorname{End}}
\newcommand{\wt}{\operatorname{wt}}
\newcommand{\Aut}{\operatorname{End}} 
\newcommand{\CatHom}{\mathcal{H}om}
\def\H{\mathcal{H}}
\newcommand{\Hom}{\operatorname{Hom}} 
\newcommand{\HOM}{\operatorname{HOM}} 
\newcommand{\Mod}{\operatorname{-mod}}
\renewcommand{\mod}{\operatorname{-mod}_{l\!f}}
\newcommand{\Rep}{\operatorname{Rep}}
\newcommand{\id}{\text{id}}
\newcommand{\gl}{\mathfrak{gl}}
\newcommand{\g}{\mathfrak{g}}
\newcommand{\Z}{\mathbb{Z}}
\newcommand{\N}{\mathbb{N}}
\newcommand{\Q}{\mathbb{Q}}
\renewcommand{\k}{\Bbbk}
\newcommand{\eps}{\varepsilon}
\newcommand{\gr}{\operatorname{gr}}
\newcommand{\unit}{\mathds{1}}
\newcommand{\rev}{^{\operatorname{rev}}}
\newcommand{\rad}{{\operatorname{rad}}}
\newcommand{\SSeq}{\operatorname{Seq}}
\newcommand{\SSeqd}{\operatorname{Seqd}}
\def\la{\lambda}
\def\al{\alpha}
\def\be{\beta}
\def\ga{\gamma}
\def\Ga{\Gamma}
\def\De{\Delta}
\def\d{{\mathrm{d}}}
\def\e{{\mathrm{e}}}
\def\x{{\mathrm{x}}}
\def\y{{\mathrm{y}}}
\def\h{{\mathrm{h}}}
\def\U{{\mathcal U}}
\def\UU{{\mathfrak{U}}}
\def\A{\mathcal{A}}
\def\B{\mathcal{B}}
\def\C{\mathcal{C}}
\def\AA{\mathfrak{A}}
\def\BB{\mathfrak{B}}
\def\CC{\mathfrak{C}}
\def\T{\omega}
\def\boldU{\text{\boldmath$\mathcal U$}}
\def\wt{\operatorname{wt}}
\def\a{{\mathtt{a}}}
\def\b{{\mathtt{b}}}
\def\c{{\mathtt{c}}}
\def\d{{\mathtt{d}}}
\def\clubsuit{\diamondsuit}
\def\spadesuit{\diamondsuit}
\def\0{{\bar{0}}}
\def\1{{\bar{1}}}
\def\Id{1}
\begin{document}

\title[Super Kac-Moody 2-categories]{\boldmath Super Kac-Moody 2-categories}

\author[J. Brundan]{Jonathan Brundan}
\author[A. Ellis]{Alexander P. Ellis}
\address{Department of Mathematics,
University of Oregon, Eugene, OR 97403, USA}
\email{brundan@uoregon.edu, apellis@gmail.com}
\thanks{2010 {\it Mathematics Subject Classification}: 17B10, 18D10.}
\thanks{Research of J.B.
supported in part by NSF grant DMS-1161094.}

\begin{abstract}
We introduce generalizations of Kac-Moody 2-categories 
in which the quiver Hecke algebras 
of Khovanov, Lauda and Rouquier
are replaced by the
quiver Hecke superalgebras of
Kang, Kashiwara and Tsuchioka.
\end{abstract}

\maketitle  
\vspace{-4mm}
\section{Introduction}

\noindent{\bf Overview.} 
Kac-Moody 2-categories were introduced by Khovanov and Lauda \cite{KL3}
and Rouquier \cite{Rou}.
They have rapidly become accepted as fundamental objects
in representation theory, with intimate connections especially to quantum groups,
canonical bases and knot invariants.
Rouquier gave a seemingly different definition to Khovanov and Lauda:
\begin{itemize}
\item
Rouquier's presentation starts from generators and relations
for certain underlying quiver Hecke algebras, adjoins right duals of all the generating
1-morphisms, 
then imposes one more ``inversion relation'' at the level of
2-morphisms. 
\item
The Khovanov-Lauda presentation incorporates various
additional generating 2-morphisms, and extra relations including
biadjointness and cyclicity. These additional generators and
relations are useful for various applications, e.g.
they are needed in order to extract a candidate for a basis in each space of
2-morphisms. 
\end{itemize}
In \cite{B}, the first author has shown that the two
versions are actually equivalent.
The main purpose of this article is to extend the computations made in
\cite{B} to include {\em super Kac-Moody 2-categories}. 
We will define these shortly following Rouquier's approach,
starting from certain 
underlying {\em quiver Hecke superalgebras}
which were introduced already by Kang, Kashiwara and Tsuchioka \cite{KKT}.
For the quiver with one odd vertex, 
the quiver Hecke superalgebra is the {\em odd nilHecke algebra}
defined independently 
in \cite{EKL}; see also \cite[$\S$3.3]{Wang} which introduced the
closely related degenerate spin affine Hecke algebras. In
this case, a super analog of the Kac-Moody 2-category was defined and studied already
in \cite{EL}. 
We will work here in the setting of {\em 2-supercategories} following
\cite{BE}, since it leads to some conceptual simplifications 
compared to the approach of \cite{EL}.

\vspace{2mm}\noindent{\bf 2-Supercategories.}
We proceed to the definitions.
Fix once and for all a supercommutative ground ring $\k = \k_\0 \oplus
\k_1$. We are mainly interested in the situation that
$\k$ is a field concentrated in even parity.

\begin{definition}\label{defsuperspace}
A {\em superspace} is a $\Z/2$-graded $(\k,\k)$-bimodule
in which the left and right actions are related by $cv =
(-1)^{|c||v|}vc$; here and subsequently, $|x|$ denotes
the parity of a homogeneous vector in a superspace.
An {\em even linear map} between superspaces is a
parity-preserving $\k$-module homomorphism.
\end{definition}

Let $\SVec$ be the Abelian category of all (small) superspaces
and even linear maps.
It is a symmetric monoidal category with tensor functor
$$
-\otimes-:\SVec \times \SVec \rightarrow \SVec
$$ 
being
the usual tensor product over $\k$,
and symmetric
braiding 
defined on objects by 
$u \otimes v \mapsto (-1)^{|u||v|} v \otimes u$.
(Our notation here follows \cite{BE}: $\SVec$ is the underlying
category to the monoidal {\em super}category $\mathcal{SV}ec$ whose
morphisms are not necessarily homogeneous linear maps.)

\begin{definition}\label{defsupercat}
A {\em supercategory} means a $\SVec$-enriched category,
i.e. each morphism space is a superspace
and composition induces an even linear map.
A {\em superfunctor} 
between
supercategories is a $\SVec$-enriched functor, i.e. a functor 
$F:\A \rightarrow \B$ such that the
function $\Hom_{\A}(\lambda,\mu) \rightarrow
\Hom_{\B}(F\lambda, F\mu), f \mapsto Ff$
is an even linear map
for all $\lambda, \mu \in \ob\A$.
\end{definition}
 
Let $\SCat$ be the category of all (small) supercategories,
with morphisms being superfunctors.
 Given two supercategories $\A$ and $\B$, we
define $\A \boxtimes \B$ to be the supercategory
whose objects are ordered pairs $(\lambda,\mu)$ of objects of $\A$
and $\B$, respectively, and 
$$
\Hom_{\A \boxtimes \B}((\lambda,\mu), (\sigma,\tau)) :=
\Hom_{\A}(\lambda,\sigma) \otimes \Hom_{\B}(\mu,\tau).
$$
Composition in $\A \boxtimes \B$
is defined using the symmetric braiding in $\SVec$, so that
$(f \otimes g) \circ (h \otimes k) =
(-1)^{|g||h|}(f \circ h) \otimes (g \circ k)$.
Given superfunctors $F:\A \rightarrow \A'$
and $G:\B \rightarrow \B'$,
there is a superfunctor $F \boxtimes G:\A \boxtimes
\B \rightarrow \A' \boxtimes \B'$ sending $(\lambda,\mu) \mapsto (F
\lambda, G \mu)$ and $f \otimes g 
\mapsto Ff \otimes Gg$.
We have now defined a functor $$
- \boxtimes -:\SCat \times
\SCat \rightarrow \SCat
$$ which 
makes $\SCat$ into a monoidal category.

\begin{definition}\label{defsuper2cat}
A {\em 2-supercategory} is a category enriched in $\SCat$.
See also \cite[Definition 2.2]{BE}
for the definition of
a {\em 2-superfunctor}
between 2-supercategories.
\end{definition}

\begin{remark}\rm
In \cite[Definition 2.1]{BE}, the $2$-supercategories of
Definition~\ref{defsuper2cat} 
are called {\em strict} 2-supercategories. 
Since we will not encounter 
any 2-supercategories below that are not strict, we have
suppressed the adjective from the outset. On 
the other hand, we will occasionally meet 
2-superfunctors that are not strict.
\end{remark}

According to Definition~\ref{defsuper2cat}, 
for objects $\lambda,\mu$ in a 2-supercategory
$\AA$,
there is given a
supercategory $\mathcal{H}om_{\AA}(\lambda,\mu)$ of
morphisms from $\lambda$ to $\mu$.
Elements of $\Hom_{\AA}(\lambda,\mu) := \ob \mathcal{H}om_{\AA}(\lambda,\mu)$
are  {\em 1-morphisms} in $\AA$.
For 1-morphisms $F, G \in \Hom_{\AA}(\lambda,\mu)$, 
we also use the shorthand $\Hom_{\AA}(F,G)$ for the superspace
$\Hom_{\mathcal{H}om_{\AA}(\lambda,\mu)}(F,G)$.
Its elements are {\em 2-morphisms}.
We often represent $x \in \Hom_{\AA}(F,G)$
by the picture
\begin{equation}\label{Xpic}
\mathord{
\begin{tikzpicture}[baseline = 0]
	\draw[-,thick,darkred] (0.08,-.4) to (0.08,-.13);
	\draw[-,thick,darkred] (0.08,.4) to (0.08,.13);
      \draw[thick,darkred] (0.08,0) circle (4pt);
   \node at (0.08,0) {\color{darkred}$\scriptstyle{x}$};
   \node at (0.08,-.53) {$\scriptstyle{F}$};
   \node at (0.52,0) {$\scriptstyle{\lambda}.$};
   \node at (-0.32,0) {$\scriptstyle{\mu}$};
   \node at (0.08,.53) {$\scriptstyle{G}$};
\end{tikzpicture}
}
\end{equation}
The composition $y \circ x$ of $x$ with another 2-morphism
$y \in \Hom_{\AA}(G,H)$ is obtained by vertically stacking
pictures:
$$
\mathord{
\begin{tikzpicture}[baseline = 0]
	\draw[-,thick,darkred] (0.08,-.4) to (0.08,-.13);
	\draw[-,thick,darkred] (0.08,.4) to (0.08,.13);
	\draw[-,thick,darkred] (0.08,.77) to (0.08,.6);
	\draw[-,thick,darkred] (0.08,1.37) to (0.08,1.03);
      \draw[thick,darkred] (0.08,0) circle (4pt);
      \draw[thick,darkred] (0.08,.9) circle (4pt);
   \node at (0.08,0) {\color{darkred}$\scriptstyle{x}$};
   \node at (0.08,.91) {\color{darkred}$\scriptstyle{y}$};
   \node at (0.08,-.53) {$\scriptstyle{F}$};
   \node at (0.62,.5) {$\scriptstyle{\lambda}.$};
   \node at (-0.42,0.5) {$\scriptstyle{\mu}$};
   \node at (0.08,1.5) {$\scriptstyle{H}$};
   \node at (0.08,.5) {$\scriptstyle{G}$};
\end{tikzpicture}
}
$$
The composition law in $\AA$ gives a coherent family of superfunctors
$$
T_{\nu,\mu,\lambda}:\mathcal{H}om_{\AA}(\mu,\nu) \boxtimes
\mathcal{H}om_{\AA}(\lambda,\mu) \rightarrow \mathcal{H}om_{\AA}(\lambda,\nu)
$$ 
for $\lambda,\mu,\nu \in \ob\AA$. Given 2-morphisms 
$x:F \rightarrow H,
y:G \rightarrow K$ between 1-morphisms $F, H:\lambda \rightarrow\mu,
G,K:\mu \rightarrow\nu$,
we denote 
$T_{\nu,\mu,\lambda}(y\otimes x):T_{\nu,\mu,\lambda}(G, F) \rightarrow T_{\nu,\mu,\lambda}(K, H)$
simply by $yx:GF\rightarrow KH$,
and represent it by horizontally stacking pictures:
$$
\mathord{
\begin{tikzpicture}[baseline = 0]
	\draw[-,thick,darkred] (0.08,-.4) to (0.08,-.13);
	\draw[-,thick,darkred] (0.08,.4) to (0.08,.13);
      \draw[thick,darkred] (0.08,0) circle (4pt);
   \node at (0.08,0) {\color{darkred}$\scriptstyle{x}$};
   \node at (0.08,-.53) {$\scriptstyle{F}$};
   \node at (0.58,0) {$\scriptstyle{\lambda}.$};
   \node at (-0.37,0) {$\scriptstyle{\mu}$};
   \node at (0.08,.53) {$\scriptstyle{H}$};
	\draw[-,thick,darkred] (-.8,-.4) to (-.8,-.13);
	\draw[-,thick,darkred] (-.8,.4) to (-.8,.13);
      \draw[thick,darkred] (-.8,0) circle (4pt);
   \node at (-.8,0) {\color{darkred}$\scriptstyle{y}$};
   \node at (-.8,-.53) {$\scriptstyle{G}$};
   \node at (-1.22,0) {$\scriptstyle{\nu}$};
   \node at (-.8,.53) {$\scriptstyle{K}$};
\end{tikzpicture}
}
$$
When confusion seems unlikely, we will use the same notation for a $1$-morphism $F$
as for its identity $2$-morphism. 
With this convention, 
we have that $yH \circ Gx = yx = (-1)^{|x||y|} Kx \circ yF$, or in pictures:
$$
\mathord{
\begin{tikzpicture}[baseline = 0]
   \node at (0.08,-.53) {$\scriptstyle{F}$};
   \node at (0.58,0) {$\scriptstyle{\lambda}.$};
   \node at (-0.37,0) {$\scriptstyle{\mu}$};
   \node at (0.08,.53) {$\scriptstyle{H}$};
   \node at (-.8,-.53) {$\scriptstyle{G}$};
   \node at (-1.22,0) {$\scriptstyle{\nu}$};
   \node at (-.8,.53) {$\scriptstyle{K}$};
	\draw[-,thick,darkred] (0.08,-.4) to (0.08,-.23);
	\draw[-,thick,darkred] (0.08,.4) to (0.08,.03);
      \draw[thick,darkred] (0.08,-0.1) circle (4pt);
   \node at (0.08,-0.1) {\color{darkred}$\scriptstyle{x}$};
	\draw[-,thick,darkred] (-.8,-.4) to (-.8,-.03);
	\draw[-,thick,darkred] (-.8,.4) to (-.8,.23);
      \draw[thick,darkred] (-.8,0.1) circle (4pt);
   \node at (-.8,.1) {\color{darkred}$\scriptstyle{y}$};
\end{tikzpicture}
}
\quad=\quad
\mathord{
\begin{tikzpicture}[baseline = 0]
   \node at (0.08,-.53) {$\scriptstyle{F}$};
   \node at (0.58,0) {$\scriptstyle{\lambda}.$};
   \node at (-0.37,0) {$\scriptstyle{\mu}$};
   \node at (0.08,.53) {$\scriptstyle{H}$};
   \node at (-.8,-.53) {$\scriptstyle{G}$};
   \node at (-1.22,0) {$\scriptstyle{\nu}$};
   \node at (-.8,.53) {$\scriptstyle{K}$};
	\draw[-,thick,darkred] (0.08,-.4) to (0.08,-.13);
	\draw[-,thick,darkred] (0.08,.4) to (0.08,.13);
      \draw[thick,darkred] (0.08,0) circle (4pt);
   \node at (0.08,0) {\color{darkred}$\scriptstyle{x}$};
	\draw[-,thick,darkred] (-.8,-.4) to (-.8,-.13);
	\draw[-,thick,darkred] (-.8,.4) to (-.8,.13);
      \draw[thick,darkred] (-.8,0) circle (4pt);
   \node at (-.8,0) {\color{darkred}$\scriptstyle{y}$};
\end{tikzpicture}
}
\quad=\quad
(-1)^{|x||y|}\:
\mathord{
\begin{tikzpicture}[baseline = 0]
   \node at (0.08,-.53) {$\scriptstyle{F}$};
   \node at (0.58,0) {$\scriptstyle{\lambda}.$};
   \node at (-0.37,0) {$\scriptstyle{\mu}$};
   \node at (0.08,.53) {$\scriptstyle{H}$};
   \node at (-.8,-.53) {$\scriptstyle{G}$};
   \node at (-1.22,0) {$\scriptstyle{\nu}$};
   \node at (-.8,.53) {$\scriptstyle{K}$};
	\draw[-,thick,darkred] (0.08,-.4) to (0.08,-.03);
	\draw[-,thick,darkred] (0.08,.4) to (0.08,.23);
      \draw[thick,darkred] (0.08,0.1) circle (4pt);
   \node at (0.08,0.1) {\color{darkred}$\scriptstyle{x}$};
	\draw[-,thick,darkred] (-.8,-.4) to (-.8,-.23);
	\draw[-,thick,darkred] (-.8,.4) to (-.8,.03);
      \draw[thick,darkred] (-.8,-0.1) circle (4pt);
   \node at (-.8,-.1) {\color{darkred}$\scriptstyle{y}$};
\end{tikzpicture}
}.
$$
This identity is the {\em super interchange law}.
The presence of the sign 
here means that
a 2-supercategory is {\em not} a 2-category in the
usual sense. In particular, diagrams for
2-morphisms in 2-supercategories 
are only invariant under rectilinear 
isotopy modulo signs.
Consequently, care is needed with horizontal levels when working with
odd 2-morphisms diagrammatically: a more complicated diagram such as
$$
\quad{\scriptstyle\nu}\quad\!
\mathord{
\begin{tikzpicture}[baseline = 8]
   \node at (0.08,1.33) {$\scriptstyle{K}$};
   \node at (.08,-.53) {$\scriptstyle{G}$};
	\draw[-,thick,darkred] (0.08,-.4) to (0.08,-.13);
	\draw[-,thick,darkred] (0.08,.57) to (0.08,.13);
	\draw[-,thick,darkred] (0.08,1.17) to (0.08,.83);
      \draw[thick,darkred] (0.08,0) circle (4pt);
      \draw[thick,darkred] (0.08,.7) circle (4pt);
   \node at (0.08,0) {\color{darkred}$\scriptstyle{y}$};
   \node at (0.08,.71) {\color{darkred}$\scriptstyle{v}$};
\end{tikzpicture}
}
\!\quad{\scriptstyle\mu}\quad\!
\mathord{
\begin{tikzpicture}[baseline = 8]
   \node at (0.08,1.33) {$\scriptstyle{H}$};
   \node at (.08,-.53) {$\scriptstyle{F}$};
	\draw[-,thick,darkred] (0.08,-.4) to (0.08,-.13);
	\draw[-,thick,darkred] (0.08,.57) to (0.08,.13);
	\draw[-,thick,darkred] (0.08,1.17) to (0.08,.83);
      \draw[thick,darkred] (0.08,0) circle (4pt);
      \draw[thick,darkred] (0.08,.7) circle (4pt);
   \node at (0.08,0) {\color{darkred}$\scriptstyle{x}$};
   \node at (0.08,.71) {\color{darkred}$\scriptstyle{u}$};
\end{tikzpicture}
}
\!\quad{\scriptstyle\lambda}\quad
$$
should be interpreted by {\em first} composing horizontally {\em then}
composing vertically. The example just given represents $(v u)
\circ (yx)$ {\em not}  $(v \circ y) (u \circ x)$.

\vspace{2mm}\noindent{\bf Super Kac-Moody 2-categories.}
With these foundational definitions behind us, we are ready to
introduce the main object of study.
We need to fix some additional data:
\begin{itemize}
\item Let $I$ be a (possibly infinite) index set equipped with a parity function $I \rightarrow \Z / 2, i \mapsto |i|$; we will say
  that $i \in I$ is {\em even} or {\em odd} according to whether $|i| = \0$ or $\1$, respectively.
If $I$ has odd elements, we make the additional assumption that 2 is invertible in the ground ring $\k$.

\item Let $(-d_{ij})_{i,j \in I}$ be a generalized Cartan matrix,
so $d_{ii} = -2$, $d_{ij} \geq 0$ for $i \neq j$, and $d_{ij} = 0
\Leftrightarrow d_{ji} = 0$. 
We make the additional assumption that
\begin{equation}\label{a1}
\text{$|i| = \1\Rightarrow d_{ij}$ is even.}
\end{equation}
\item
Pick a complex vector space $\mathfrak{h}$
and
linearly independent subsets
$\{\alpha_i\:|\:i \in I\}$ and $\{h_i\:|\:i \in I\}$ of
$\mathfrak{h}^*$ and $\mathfrak{h}$, respectively,
such that 
$\langle h_i, \alpha_j\rangle = -d_{ij}$ for all $i,j \in I$.
Let $P := \{\lambda \in \mathfrak{h}^*\:|\:\langle h_i, \lambda\rangle
\in \Z\text{ for all }i \in I\}$ be the {\em weight
  lattice} and
$Q := \bigoplus_{i \in I} \Z \alpha_i$ be the {\em root lattice}.
\item
Let $\g$ be the {\em Kac-Moody algebra} associated to this data with
Chevalley generators $\{e_i, f_i, h_i\:|\:i \in I\}$ and Cartan
subalgebra $\mathfrak{h}$.
\item Finally
fix units $t_{ij} \in \k_\0^\times$ such that 
\begin{equation}\label{a2}
\text{$t_{ii} = 1,\quad
d_{ij}=0 \Rightarrow t_{ij} = t_{ji}$,}
\end{equation}
and
scalars $s_{ij}^{pq} \in \k_\0$ for $0 < p < d_{ij}$, $0 < q <
d_{ji}$ such that
\begin{equation}\label{a3}
\text{$s_{ij}^{pq} = s_{ji}^{qp}$, \quad $p |i| = \1 \Rightarrow s_{ij}^{pq} =0$.
}
\end{equation}
\end{itemize}
In case all elements of $I$ are even,
the following
is the same as the Rouquier's definition of 
Kac-Moody 2-category from \cite{Rou}
(viewing the latter as 
a 2-supercategory by declaring that all of its 2-morphisms are even).

\begin{definition}\label{def1}
The {\em Kac-Moody 2-supercategory}
is the 2-supercategory
 $\UU(\g)$
with objects $P$,
generating 
1-morphisms 
$E_i 1_\lambda:\lambda \rightarrow \lambda+\alpha_i$ and
$F_i 1_\lambda:\lambda \rightarrow \lambda-\alpha_i$ for each $i \in I$ and
$\lambda \in P$, and generating
2-morphisms
$x:E_i 1_\lambda \rightarrow E_i 1_\lambda$ of parity $|i|$,
$\tau:E_i E_j 1_\lambda \rightarrow E_j E_i 1_\lambda$
of parity $|i||j|$,
$\eta:1_\lambda \rightarrow F_i E_i 1_\lambda$ of parity $\0$ and
$\eps:E_i F_i 1_\lambda \rightarrow 1_\lambda$
of parity $\0$,
subject to certain relations.  
To record the relations among these generators, 
we switch to diagrams, representing the identity 2-morphisms
of $E_i 1_\lambda$ and $F_i 1_\lambda$ by
${\scriptstyle\substack{\lambda+\alpha_i\\\phantom{-}}}\substack{{\color{darkred}{\displaystyle\uparrow}} 
\\ {\scriptscriptstyle i}}{\scriptstyle\substack{\lambda\\\phantom{-}}}
$ and ${\scriptstyle\substack{\lambda-\alpha_i
\\\phantom{-}}}\substack{{\color{darkred}{\displaystyle\downarrow}} \\
  {\scriptscriptstyle
    i}}{\scriptstyle\substack{\lambda\\\phantom{-}}}$, respectively,
and the other generators by
\begin{align}\label{solid1}
&x 
= 
\mathord{
\begin{tikzpicture}[baseline = 0]
	\draw[->,thick,darkred] (0.08,-.3) to (0.08,.4);
      \node at (0.08,0.05) {$\color{darkred}\bullet$};
   \node at (0.08,-.4) {$\scriptstyle{i}$};
\end{tikzpicture}
}
{\scriptstyle\lambda}\:,
&&\tau
= 
\mathord{
\begin{tikzpicture}[baseline = 0]
	\draw[->,thick,darkred] (0.28,-.3) to (-0.28,.4);
	\draw[->,thick,darkred] (-0.28,-.3) to (0.28,.4);
   \node at (-0.28,-.4) {$\scriptstyle{i}$};
   \node at (0.28,-.4) {$\scriptstyle{j}$};
   \node at (.4,.05) {$\scriptstyle{\lambda}$};
\end{tikzpicture}
}\:,
&&
\eta
= 
\mathord{
\begin{tikzpicture}[baseline = 0]
	\draw[<-,thick,darkred] (0.4,0.3) to[out=-90, in=0] (0.1,-0.1);
	\draw[-,thick,darkred] (0.1,-0.1) to[out = 180, in = -90] (-0.2,0.3);
    \node at (-0.2,.4) {$\scriptstyle{i}$};
  \node at (0.3,-0.15) {$\scriptstyle{\lambda}$};
\end{tikzpicture}
}\:,
&&\eps
= 
\mathord{
\begin{tikzpicture}[baseline = 0]
	\draw[<-,thick,darkred] (0.4,-0.1) to[out=90, in=0] (0.1,0.3);
	\draw[-,thick,darkred] (0.1,0.3) to[out = 180, in = 90] (-0.2,-0.1);
    \node at (-0.2,-.2) {$\scriptstyle{i}$};
  \node at (0.3,0.4) {$\scriptstyle{\lambda}$};
\end{tikzpicture}
}.\\\nopagebreak
&\text{(parity $|i|$)}&
&\text{(parity $|i||j|$)}&
&\text{(parity $\0$)}&
&\text{(parity $\0$)}\notag
\end{align}
We denote the $n$th power of $x$ (under vertical composition) by 
\begin{align}\label{solid3}
&x^{\circ n}
= 
\mathord{
\begin{tikzpicture}[baseline = 0]
	\draw[->,thick,darkred] (0.08,-.3) to (0.08,.4);
      \node at (0.08,0.05) {$\color{darkred}\bullet$};
   \node at (0.08,-.4) {$\scriptstyle{i}$};
      \node at (-0.13,0.17) {$\color{darkred}\scriptstyle{n}$};
\end{tikzpicture}
}
{\scriptstyle\lambda}\:.\\
&\text{(parity $|i|n$)}\notag
\end{align}

First, we have the {\em quiver Hecke superalgebra relations} from \cite{KKT}:
\begin{align}
\mathord{
\begin{tikzpicture}[baseline = 0]
	\draw[->,thick,darkred] (0.28,.4) to[out=90,in=-90] (-0.28,1.1);
	\draw[->,thick,darkred] (-0.28,.4) to[out=90,in=-90] (0.28,1.1);
	\draw[-,thick,darkred] (0.28,-.3) to[out=90,in=-90] (-0.28,.4);
	\draw[-,thick,darkred] (-0.28,-.3) to[out=90,in=-90] (0.28,.4);
  \node at (-0.28,-.4) {$\scriptstyle{i}$};
  \node at (0.28,-.4) {$\scriptstyle{j}$};
   \node at (.43,.4) {$\scriptstyle{\lambda}$};
\end{tikzpicture}
}
=
\left\{
\begin{array}{ll}
0&\text{if $i=j$,}\\
t_{ij}\mathord{
\begin{tikzpicture}[baseline = 0]
	\draw[->,thick,darkred] (0.08,-.3) to (0.08,.4);
	\draw[->,thick,darkred] (-0.28,-.3) to (-0.28,.4);
   \node at (-0.28,-.4) {$\scriptstyle{i}$};
   \node at (0.08,-.4) {$\scriptstyle{j}$};
   \node at (.3,.05) {$\scriptstyle{\lambda}$};
\end{tikzpicture}
}&\text{if $d_{ij}=0$,}\\
 t_{ij}
\mathord{
\begin{tikzpicture}[baseline = 0]
	\draw[->,thick,darkred] (0.08,-.3) to (0.08,.4);
	\draw[->,thick,darkred] (-0.28,-.3) to (-0.28,.4);
   \node at (-0.28,-.4) {$\scriptstyle{i}$};
   \node at (0.08,-.4) {$\scriptstyle{j}$};
   \node at (.3,-.05) {$\scriptstyle{\lambda}$};
      \node at (-0.28,0.05) {$\color{darkred}\bullet$};
      \node at (-0.5,0.2) {$\color{darkred}\scriptstyle{d_{ij}}$};
\end{tikzpicture}
}
+
t_{ji}
\mathord{
\begin{tikzpicture}[baseline = 0]
	\draw[->,thick,darkred] (0.08,-.3) to (0.08,.4);
	\draw[->,thick,darkred] (-0.28,-.3) to (-0.28,.4);
   \node at (-0.28,-.4) {$\scriptstyle{i}$};
   \node at (0.08,-.4) {$\scriptstyle{j}$};
   \node at (.3,-.05) {$\scriptstyle{\lambda}$};
     \node at (0.08,0.05) {$\color{darkred}\bullet$};
     \node at (0.32,0.2) {$\color{darkred}\scriptstyle{d_{ji}}$};
\end{tikzpicture}
}
+\!\! \displaystyle\sum_{\substack{0 < p < d_{ij}\\0 < q <
    d_{ji}}} \!\!\!\!\!s_{ij}^{pq}
\mathord{
\begin{tikzpicture}[baseline = 0]
	\draw[->,thick,darkred] (0.08,-.3) to (0.08,.4);
	\draw[->,thick,darkred] (-0.28,-.3) to (-0.28,.4);
   \node at (-0.28,-.4) {$\scriptstyle{i}$};
   \node at (0.08,-.4) {$\scriptstyle{j}$};
   \node at (.3,-.05) {$\scriptstyle{\lambda}$};
      \node at (-0.28,0.05) {$\color{darkred}\bullet$};
      \node at (0.08,0.05) {$\color{darkred}\bullet$};
      \node at (-0.43,0.2) {$\color{darkred}\scriptstyle{p}$};
      \node at (0.22,0.2) {$\color{darkred}\scriptstyle{q}$};
\end{tikzpicture}
}
&\text{otherwise,}\\
\end{array}
\right. \label{now}
\end{align}
\begin{align}
\label{qha}
\mathord{
\begin{tikzpicture}[baseline = 0]
	\draw[<-,thick,darkred] (0.25,.6) to (-0.25,-.2);
	\draw[->,thick,darkred] (0.25,-.2) to (-0.25,.6);
  \node at (-0.25,-.26) {$\scriptstyle{i}$};
   \node at (0.25,-.26) {$\scriptstyle{j}$};
  \node at (.3,.25) {$\scriptstyle{\lambda}$};
      \node at (-0.13,-0.02) {$\color{darkred}\bullet$};
\end{tikzpicture}
}
- (-1)^{|i||j|}
\mathord{
\begin{tikzpicture}[baseline = 0]
	\draw[<-,thick,darkred] (0.25,.6) to (-0.25,-.2);
	\draw[->,thick,darkred] (0.25,-.2) to (-0.25,.6);
  \node at (-0.25,-.26) {$\scriptstyle{i}$};
   \node at (0.25,-.26) {$\scriptstyle{j}$};
  \node at (.3,.25) {$\scriptstyle{\lambda}$};
      \node at (0.13,0.42) {$\color{darkred}\bullet$};
\end{tikzpicture}
}
&=
\mathord{
\begin{tikzpicture}[baseline = 0]
 	\draw[<-,thick,darkred] (0.25,.6) to (-0.25,-.2);
	\draw[->,thick,darkred] (0.25,-.2) to (-0.25,.6);
  \node at (-0.25,-.26) {$\scriptstyle{i}$};
   \node at (0.25,-.26) {$\scriptstyle{j}$};
  \node at (.3,.25) {$\scriptstyle{\lambda}$};
      \node at (-0.13,0.42) {$\color{darkred}\bullet$};
\end{tikzpicture}
}
-(-1)^{|i||j|}
\mathord{
\begin{tikzpicture}[baseline = 0]
	\draw[<-,thick,darkred] (0.25,.6) to (-0.25,-.2);
	\draw[->,thick,darkred] (0.25,-.2) to (-0.25,.6);
  \node at (-0.25,-.26) {$\scriptstyle{i}$};
   \node at (0.25,-.26) {$\scriptstyle{j}$};
  \node at (.3,.25) {$\scriptstyle{\lambda}$};
      \node at (0.13,-0.02) {$\color{darkred}\bullet$};
\end{tikzpicture}
}
=
\delta_{i,j}
\mathord{
\begin{tikzpicture}[baseline = -5]
 	\draw[->,thick,darkred] (0.08,-.3) to (0.08,.4);
	\draw[->,thick,darkred] (-0.28,-.3) to (-0.28,.4);
   \node at (-0.28,-.4) {$\scriptstyle{i}$};
   \node at (0.08,-.4) {$\scriptstyle{j}$};
 \node at (.28,.06) {$\scriptstyle{\lambda}$};
\end{tikzpicture}
},
\end{align}
\begin{align}
\mathord{
\begin{tikzpicture}[baseline = 0]
	\draw[<-,thick,darkred] (0.45,.8) to (-0.45,-.4);
	\draw[->,thick,darkred] (0.45,-.4) to (-0.45,.8);
        \draw[-,thick,darkred] (0,-.4) to[out=90,in=-90] (-.45,0.2);
        \draw[->,thick,darkred] (-0.45,0.2) to[out=90,in=-90] (0,0.8);
   \node at (-0.45,-.45) {$\scriptstyle{i}$};
   \node at (0,-.45) {$\scriptstyle{j}$};
  \node at (0.45,-.45) {$\scriptstyle{k}$};
   \node at (.5,-.1) {$\scriptstyle{\lambda}$};
\end{tikzpicture}
}
\!-
\!\!\!
\mathord{
\begin{tikzpicture}[baseline = 0]
	\draw[<-,thick,darkred] (0.45,.8) to (-0.45,-.4);
	\draw[->,thick,darkred] (0.45,-.4) to (-0.45,.8);
        \draw[-,thick,darkred] (0,-.4) to[out=90,in=-90] (.45,0.2);
        \draw[->,thick,darkred] (0.45,0.2) to[out=90,in=-90] (0,0.8);
   \node at (-0.45,-.45) {$\scriptstyle{i}$};
   \node at (0,-.45) {$\scriptstyle{j}$};
  \node at (0.45,-.45) {$\scriptstyle{k}$};
   \node at (.5,-.1) {$\scriptstyle{\lambda}$};
\end{tikzpicture}
}
&=
\left\{
\begin{array}{ll}
\displaystyle
\sum_{\substack{r,s \geq 0 \\ r+s=d_{ij}-1}}
\!\!\!
(-1)^{|i|(|j|+s)}
t_{ij}
\!
\mathord{
\begin{tikzpicture}[baseline = 0]
	\draw[->,thick,darkred] (0.44,-.3) to (0.44,.4);
	\draw[->,thick,darkred] (0.08,-.3) to (0.08,.4);
	\draw[->,thick,darkred] (-0.28,-.3) to (-0.28,.4);
   \node at (-0.28,-.4) {$\scriptstyle{i}$};
   \node at (0.08,-.4) {$\scriptstyle{j}$};
   \node at (0.44,-.4) {$\scriptstyle{k}$};
  \node at (.6,-.1) {$\scriptstyle{\lambda}$};
     \node at (-0.28,0.05) {$\color{darkred}\bullet$};
     \node at (0.44,0.05) {$\color{darkred}\bullet$};
      \node at (-0.43,0.2) {$\color{darkred}\scriptstyle{r}$};
      \node at (0.55,0.2) {$\color{darkred}\scriptstyle{s}$};
\end{tikzpicture}
}\\
\quad\qquad+ \displaystyle
\!\!\sum_{\substack{0 < p < d_{ij}\\0 < q <
    d_{ji}\\r,s \geq 0\\r+s=p-1}}
\!\!\!\!
(-1)^{|i|(|j|+s)}
s_{ij}^{pq}
\mathord{
\begin{tikzpicture}[baseline = 0]
	\draw[->,thick,darkred] (0.44,-.3) to (0.44,.4);
	\draw[->,thick,darkred] (0.08,-.3) to (0.08,.4);
	\draw[->,thick,darkred] (-0.28,-.3) to (-0.28,.4);
   \node at (-0.28,-.4) {$\scriptstyle{i}$};
   \node at (0.08,-.4) {$\scriptstyle{j}$};
   \node at (0.44,-.4) {$\scriptstyle{k}$};
  \node at (.6,-.1) {$\scriptstyle{\lambda}$};
     \node at (-0.28,0.05) {$\color{darkred}\bullet$};
     \node at (0.44,0.05) {$\color{darkred}\bullet$};
      \node at (-0.43,0.2) {$\color{darkred}\scriptstyle{r}$};
     \node at (0.55,0.2) {$\color{darkred}\scriptstyle{s}$};
     \node at (0.08,0.05) {$\color{darkred}\bullet$};
      \node at (0.2,0.2) {$\color{darkred}\scriptstyle{q}$};
\end{tikzpicture}
}
&\text{if $i=k \neq j$,}\\
0&\text{otherwise.}
\end{array}\label{qhalast}
\right.\end{align}
In (\ref{now}), we have drawn multiple dots on the same horizontal
level, which is potentially ambiguous: our convention for this is that
it means the horizontal composition of $x^{\circ p}$ and $x^{\circ
  q}$,
so that
$$
\mathord{
\begin{tikzpicture}[baseline = 0]
	\draw[->,thick,darkred] (0.08,-.3) to (0.08,.4);
	\draw[->,thick,darkred] (-0.28,-.3) to (-0.28,.4);
   \node at (-0.28,-.4) {$\scriptstyle{i}$};
   \node at (0.08,-.4) {$\scriptstyle{j}$};
   \node at (.3,-.15) {$\scriptstyle{\lambda}$};
      \node at (-0.28,0.05) {$\color{darkred}\bullet$};
      \node at (0.08,0.05) {$\color{darkred}\bullet$};
      \node at (-0.43,0.2) {$\color{darkred}\scriptstyle{p}$};
      \node at (0.22,0.2) {$\color{darkred}\scriptstyle{q}$};
\end{tikzpicture}
}
:=
\mathord{
\begin{tikzpicture}[baseline = 0]
	\draw[->,thick,darkred] (0.08,-.3) to (0.08,.4);
	\draw[->,thick,darkred] (-0.28,-.3) to (-0.28,.4);
   \node at (-0.28,-.4) {$\scriptstyle{i}$};
   \node at (0.08,-.4) {$\scriptstyle{j}$};
   \node at (.3,-.25) {$\scriptstyle{\lambda}$};
      \node at (-0.28,0.15) {$\color{darkred}\bullet$};
      \node at (0.08,-0.05) {$\color{darkred}\bullet$};
      \node at (-0.43,0.3) {$\color{darkred}\scriptstyle{p}$};
      \node at (0.22,0.1) {$\color{darkred}\scriptstyle{q}$};
\end{tikzpicture}
}.
$$
Note further by the assumption (\ref{a3}) that
$$
s_{ij}^{pq} 
\mathord{
\begin{tikzpicture}[baseline = 0]
	\draw[->,thick,darkred] (0.08,-.3) to (0.08,.4);
	\draw[->,thick,darkred] (-0.28,-.3) to (-0.28,.4);
   \node at (-0.28,-.4) {$\scriptstyle{i}$};
   \node at (0.08,-.4) {$\scriptstyle{j}$};
   \node at (.3,-.25) {$\scriptstyle{\lambda}$};
      \node at (-0.28,0.15) {$\color{darkred}\bullet$};
      \node at (0.08,-0.05) {$\color{darkred}\bullet$};
      \node at (-0.43,0.3) {$\color{darkred}\scriptstyle{p}$};
      \node at (0.22,0.1) {$\color{darkred}\scriptstyle{q}$};
\end{tikzpicture}
}
=
s_{ij}^{pq} 
\mathord{
\begin{tikzpicture}[baseline = 0]
	\draw[->,thick,darkred] (0.08,-.3) to (0.08,.4);
	\draw[->,thick,darkred] (-0.28,-.3) to (-0.28,.4);
   \node at (-0.28,-.4) {$\scriptstyle{i}$};
   \node at (0.08,-.4) {$\scriptstyle{j}$};
   \node at (.3,-.05) {$\scriptstyle{\lambda}$};
      \node at (-0.28,-0.05) {$\color{darkred}\bullet$};
      \node at (0.08,0.15) {$\color{darkred}\bullet$};
      \node at (-0.43,0.1) {$\color{darkred}\scriptstyle{p}$};
      \node at (0.22,0.3) {$\color{darkred}\scriptstyle{q}$};
\end{tikzpicture}
}.
$$
Similar remarks apply to (\ref{qhalast}) and all other such situations
below.

Next we have the {\em right adjunction relations}:
\begin{align}\label{rightadj}
\mathord{
\begin{tikzpicture}[baseline = 0]
  \draw[->,thick,darkred] (0.3,0) to (0.3,.4);
	\draw[-,thick,darkred] (0.3,0) to[out=-90, in=0] (0.1,-0.4);
	\draw[-,thick,darkred] (0.1,-0.4) to[out = 180, in = -90] (-0.1,0);
	\draw[-,thick,darkred] (-0.1,0) to[out=90, in=0] (-0.3,0.4);
	\draw[-,thick,darkred] (-0.3,0.4) to[out = 180, in =90] (-0.5,0);
  \draw[-,thick,darkred] (-0.5,0) to (-0.5,-.4);
   \node at (-0.5,-.5) {$\scriptstyle{i}$};
   \node at (0.5,0) {$\scriptstyle{\lambda}$};
\end{tikzpicture}
}
&=
\mathord{\begin{tikzpicture}[baseline=0]
  \draw[->,thick,darkred] (0,-0.4) to (0,.4);
   \node at (0,-.5) {$\scriptstyle{i}$};
   \node at (0.2,0) {$\scriptstyle{\lambda}$};
\end{tikzpicture}
},\qquad
\mathord{
\begin{tikzpicture}[baseline = 0]
  \draw[->,thick,darkred] (0.3,0) to (0.3,-.4);
	\draw[-,thick,darkred] (0.3,0) to[out=90, in=0] (0.1,0.4);
	\draw[-,thick,darkred] (0.1,0.4) to[out = 180, in = 90] (-0.1,0);
	\draw[-,thick,darkred] (-0.1,0) to[out=-90, in=0] (-0.3,-0.4);
	\draw[-,thick,darkred] (-0.3,-0.4) to[out = 180, in =-90] (-0.5,0);
  \draw[-,thick,darkred] (-0.5,0) to (-0.5,.4);
   \node at (-0.5,.5) {$\scriptstyle{i}$};
   \node at (0.5,0) {$\scriptstyle{\lambda}$};
\end{tikzpicture}
}
=
\mathord{\begin{tikzpicture}[baseline=0]
  \draw[<-,thick,darkred] (0,-0.4) to (0,.4);
   \node at (0,.5) {$\scriptstyle{i}$};
   \node at (0.2,0) {$\scriptstyle{\lambda}$};
\end{tikzpicture}
}.
\end{align}
These imply that $F_i 1_{\lambda+\alpha_i}$ is
a right dual of $E_i 1_\lambda$.

Finally there are some {\em inversion relations}. To formulate these, we first
introduce a new 2-morphism
\begin{align}\label{sigrel}
&\mathord{
\begin{tikzpicture}[baseline = 0]
	\draw[<-,thick,darkred] (0.28,-.3) to (-0.28,.4);
	\draw[->,thick,darkred] (-0.28,-.3) to (0.28,.4);
   \node at (-0.28,-.4) {$\scriptstyle{j}$};
   \node at (-0.28,.5) {$\scriptstyle{i}$};
   \node at (.4,.05) {$\scriptstyle{\lambda}$};
\end{tikzpicture}
}
:=
\mathord{
\begin{tikzpicture}[baseline = 0]
	\draw[->,thick,darkred] (0.3,-.5) to (-0.3,.5);
	\draw[-,thick,darkred] (-0.2,-.2) to (0.2,.3);
        \draw[-,thick,darkred] (0.2,.3) to[out=50,in=180] (0.5,.5);
        \draw[->,thick,darkred] (0.5,.5) to[out=0,in=90] (0.8,-.5);
        \draw[-,thick,darkred] (-0.2,-.2) to[out=230,in=0] (-0.5,-.5);
        \draw[-,thick,darkred] (-0.5,-.5) to[out=180,in=-90] (-0.8,.5);
  \node at (-0.8,.6) {$\scriptstyle{i}$};
   \node at (0.28,-.6) {$\scriptstyle{j}$};
   \node at (1.05,.05) {$\scriptstyle{\lambda}$};
\end{tikzpicture}
}
:E_j F_i 1_\lambda \rightarrow F_i E_j 1_\lambda.\\
&\text{(parity $|i||j|$)}\notag
\end{align}
Then we require that the following (not necessarily homogeneous)
2-morphisms
are isomorphisms:
\begin{align}\label{inv1}
\mathord{
\begin{tikzpicture}[baseline = 0]
	\draw[<-,thick,darkred] (0.28,-.3) to (-0.28,.4);
	\draw[->,thick,darkred] (-0.28,-.3) to (0.28,.4);
   \node at (-0.28,-.4) {$\scriptstyle{j}$};
   \node at (-0.28,.5) {$\scriptstyle{i}$};
   \node at (.4,.05) {$\scriptstyle{\lambda}$};
\end{tikzpicture}
}
&:E_j F_i 1_\lambda \stackrel{\sim}{\rightarrow} F_i E_j 1_\lambda
&\text{if $i \neq j$,}\\
\label{inv2}
\mathord{
\begin{tikzpicture}[baseline = 0]
	\draw[<-,thick,darkred] (0.28,-.3) to (-0.28,.4);
	\draw[->,thick,darkred] (-0.28,-.3) to (0.28,.4);
   \node at (-0.28,-.4) {$\scriptstyle{i}$};
   \node at (-0.28,.5) {$\scriptstyle{i}$};
   \node at (.4,.05) {$\scriptstyle{\lambda}$};
\end{tikzpicture}
}
\oplus
\bigoplus_{n=0}^{\langle h_i,\lambda\rangle-1}
\mathord{
\begin{tikzpicture}[baseline = 0]
	\draw[<-,thick,darkred] (0.4,0) to[out=90, in=0] (0.1,0.4);
	\draw[-,thick,darkred] (0.1,0.4) to[out = 180, in = 90] (-0.2,0);
    \node at (-0.2,-.1) {$\scriptstyle{i}$};
  \node at (0.3,0.5) {$\scriptstyle{\lambda}$};
      \node at (-0.3,0.2) {$\color{darkred}\scriptstyle{n}$};
      \node at (-0.15,0.2) {$\color{darkred}\bullet$};
\end{tikzpicture}
}
&:
E_i F_i 1_\lambda\stackrel{\sim}{\rightarrow}
F_i E_i 1_\lambda \oplus 1_\lambda^{\oplus \langle h_i,\lambda\rangle}
&\text{if $\langle h_i,\lambda\rangle \geq
  0$},\\
\mathord{
\begin{tikzpicture}[baseline = 0]
	\draw[<-,thick,darkred] (0.28,-.3) to (-0.28,.4);
	\draw[->,thick,darkred] (-0.28,-.3) to (0.28,.4);
   \node at (-0.28,-.4) {$\scriptstyle{i}$};
   \node at (-0.28,.5) {$\scriptstyle{i}$};
   \node at (.4,.05) {$\scriptstyle{\lambda}$};
\end{tikzpicture}
}
\oplus
\bigoplus_{n=0}^{-\langle h_i,\lambda\rangle-1}
\mathord{
\begin{tikzpicture}[baseline = 0]
	\draw[<-,thick,darkred] (0.4,0.2) to[out=-90, in=0] (0.1,-.2);
	\draw[-,thick,darkred] (0.1,-.2) to[out = 180, in = -90] (-0.2,0.2);
    \node at (-0.2,.3) {$\scriptstyle{i}$};
  \node at (0.3,-0.25) {$\scriptstyle{\lambda}$};
      \node at (0.55,0) {$\color{darkred}\scriptstyle{n}$};
      \node at (0.38,0) {$\color{darkred}\bullet$};
\end{tikzpicture}
}
&
:E_i F_i 1_\lambda \oplus 
1_\lambda^{\oplus -\langle h_i,\lambda\rangle}
\stackrel{\sim}{\rightarrow}
 F_i E_i 1_\lambda&\text{if $\langle h_i,\lambda\rangle \leq
  0$}.\label{inv3}
\end{align}
Note that
(\ref{inv2})--(\ref{inv3}) are 2-morphisms in the additive envelope of
$\UU(\g)$.
Nevertheless this defines some genuine relations for
$\UU(\g)$ itself (rather than its additive envelope): we mean that there are some 
as yet unnamed
generating $2$-morphisms in $\UU(\g)$ which are
the matrix entries of
two-sided inverses to 
(\ref{inv2})--(\ref{inv3}).
\end{definition}

\vspace{2mm}
\noindent{\bf Second adjunction.}
Let
\begin{equation}
|i,\lambda| := |i| (\langle h_i,\lambda\rangle+1).
\end{equation}
Since $|i,\lambda| = |i,\lambda\pm\alpha_j|$ for
any $j \in I$, this only depends on the coset of $\lambda$ 
modulo $Q$.
In section~\ref{more}, we will define some additional 2-morphisms 
$\eta':1_\lambda \rightarrow E_i F_i 1_\lambda$ and $\eps':F_i E_i
1_\lambda \rightarrow 1_\lambda$
represented diagrammatically by leftward cups and caps:
\begin{align}\label{solid2}
&\eta' = 
\mathord{
\begin{tikzpicture}[baseline = 0]
	\draw[-,thick,darkred] (0.4,0.3) to[out=-90, in=0] (0.1,-0.1);
	\draw[->,thick,darkred] (0.1,-0.1) to[out = 180, in = -90] (-0.2,0.3);
    \node at (0.4,.4) {$\scriptstyle{i}$};
  \node at (0.3,-0.15) {$\scriptstyle{\lambda}$};
\end{tikzpicture}
}\:,
&&
\eps'=\mathord{
\begin{tikzpicture}[baseline = 0]
	\draw[-,thick,darkred] (0.4,-0.1) to[out=90, in=0] (0.1,0.3);
	\draw[->,thick,darkred] (0.1,0.3) to[out = 180, in = 90] (-0.2,-0.1);
    \node at (0.4,-.2) {$\scriptstyle{i}$};
  \node at (0.3,0.4) {$\scriptstyle{\lambda}$};
\end{tikzpicture}
}.\\
&\text{(parity }\text{$|i,\lambda|$)}&
&\text{(parity }\text{$|i,\lambda|$)}
\notag
\end{align}
Following the idea of \cite{BHLW}, we will normalize these
in a different way to \cite{CL, B},
in order to salvage some cyclicity.
Consequently, our definitions of $\eps'$ and $\eta'$ depend on the
additional choice of
units $c_{\lambda;i} \in \k_\0^\times$ for each $i \in I$ and
  $\lambda \in P$ such that 
\begin{equation}\label{a4}
c_{\lambda+\alpha_j;i} = t_{ij}
  c_{\lambda;i}.
\end{equation}
In section~\ref{sr4}, we will show that $\eta'$ and $\eps'$ satisfy the following {\em left adjunction relations}:
\begin{equation}
\mathord{
\begin{tikzpicture}[baseline = 0]
  \draw[-,thick,darkred] (0.3,0) to (0.3,-.4);
	\draw[-,thick,darkred] (0.3,0) to[out=90, in=0] (0.1,0.4);
	\draw[-,thick,darkred] (0.1,0.4) to[out = 180, in = 90] (-0.1,0);
	\draw[-,thick,darkred] (-0.1,0) to[out=-90, in=0] (-0.3,-0.4);
	\draw[-,thick,darkred] (-0.3,-0.4) to[out = 180, in =-90] (-0.5,0);
  \draw[->,thick,darkred] (-0.5,0) to (-0.5,.4);
   \node at (0.3,-.5) {$\scriptstyle{i}$};
   \node at (0.5,0) {$\scriptstyle{\lambda}$};
\end{tikzpicture}
}
=
(-1)^{|i,\lambda|}\mathord{\begin{tikzpicture}[baseline=0]
  \draw[->,thick,darkred] (0,-0.4) to (0,.4);
   \node at (0,-.5) {$\scriptstyle{i}$};
   \node at (0.2,0) {$\scriptstyle{\lambda}$};
\end{tikzpicture}
},
\qquad
\mathord{
\begin{tikzpicture}[baseline = 0]
  \draw[-,thick,darkred] (0.3,0) to (0.3,.4);
	\draw[-,thick,darkred] (0.3,0) to[out=-90, in=0] (0.1,-0.4);
	\draw[-,thick,darkred] (0.1,-0.4) to[out = 180, in = -90] (-0.1,0);
	\draw[-,thick,darkred] (-0.1,0) to[out=90, in=0] (-0.3,0.4);
	\draw[-,thick,darkred] (-0.3,0.4) to[out = 180, in =90] (-0.5,0);
  \draw[->,thick,darkred] (-0.5,0) to (-0.5,-.4);
   \node at (0.3,.5) {$\scriptstyle{i}$};
   \node at (0.5,0) {$\scriptstyle{\lambda}$};
\end{tikzpicture}
}
=
\mathord{\begin{tikzpicture}[baseline=0]
  \draw[<-,thick,darkred] (0,-0.4) to (0,.4);
   \node at (0,.5) {$\scriptstyle{i}$};
   \node at (0.2,0) {$\scriptstyle{\lambda}$};
\end{tikzpicture}
}.
\end{equation}
Consequently, 
$\Pi^{|i,\lambda|}F_i 1_{\lambda+\alpha_i}$ is a left dual of $E_i 1_\lambda$,
working now in the $\Pi$-envelope $\UU_\pi(\g)$ of $\UU(\g)$ from
 \cite[Definition 4.4]{BE}; cf.
Definition~\ref{ernie} below.

\vspace{2mm}
\noindent{\bf Further relations.}
In sections~\ref{sr1}--\ref{sr5}, we also derive various other relations
from the defining relations,
enough to see in particular that 
the inverses of the 2-morphisms
(\ref{inv1})--(\ref{inv3}) can be written as certain horizontal and
vertical compositions of
$x,\tau,\eps,\eta,\eps'$ and $\eta'$, i.e. 
the 2-morphisms named so far are enough to generate all other 
2-morphisms in $\UU(\g)$.
Some of our extra relations
are as follows.
\begin{itemize}
\item
The super analog of Lauda's {\em infinite Grassmannian relation}:
Let 
$\Sym$
be the algebra of symmetric functions over $\k$.
Recall $\Sym$ is generated both by the elementary symmetric functions $\e_r\:(r
\geq 0)$ and by the complete symmetric functions $\h_s\:(s \geq 0)$;
we view it as a superalgebra by declaring that all these generators are even.
By \cite[(I.2.6$'$)]{Mac},
elementary and complete symmetric functions are related by the equations 
$$
\e_0=\h_0 = 1,
\qquad
\sum_{r+s=n}(-1)^s \e_r \h_s = 0
\text{ for all $n > 0$}.
$$
Take $i \in I$, $\lambda \in P$ and set $h := \langle h_i,\lambda \rangle$.
If $i$ is even, Lauda \cite{L} observed already that there exists a
unique
homomorphism
\begin{equation}\label{beta1}
\beta_{\lambda;i}:\Sym \rightarrow
\operatorname{End}_{\UU(\g)}(1_\lambda)
\end{equation}
such that 
\begin{align*}
\qquad\quad \e_n
&\mapsto
c_{\lambda;i}^{-1}
\mathord{
\begin{tikzpicture}[baseline = 0]
  \draw[<-,thick,darkred] (0,0.4) to[out=180,in=90] (-.2,0.2);
  \draw[-,thick,darkred] (0.2,0.2) to[out=90,in=0] (0,.4);
 \draw[-,thick,darkred] (-.2,0.2) to[out=-90,in=180] (0,0);
  \draw[-,thick,darkred] (0,0) to[out=0,in=-90] (0.2,0.2);
 \node at (0,-.1) {$\scriptstyle{i}$};
   \node at (0.3,0.2) {$\scriptstyle{\lambda}$};
   \node at (-0.2,0.2) {$\color{darkred}\bullet$};
   \node at (-0.75,0.2) {$\color{darkred}\scriptstyle{n+h-1}$};
\end{tikzpicture}
}\text{ if $n> -h$,}
&\h_n
&\mapsto
(-1)^nc_{\lambda;i}\mathord{
\begin{tikzpicture}[baseline = 0]
  \draw[->,thick,darkred] (0.2,0.2) to[out=90,in=0] (0,.4);
  \draw[-,thick,darkred] (0,0.4) to[out=180,in=90] (-.2,0.2);
\draw[-,thick,darkred] (-.2,0.2) to[out=-90,in=180] (0,0);
  \draw[-,thick,darkred] (0,0) to[out=0,in=-90] (0.2,0.2);
 \node at (0,-.08) {$\scriptstyle{i}$};
   \node at (-0.3,0.2) {$\scriptstyle{\lambda}$};
   \node at (0.2,0.2) {$\color{darkred}\bullet$};
   \node at (0.75,0.2) {$\color{darkred}\scriptstyle{n-h-1}$};
\end{tikzpicture}
}\text{ if $n > h$,}
\end{align*}
bearing in mind the new normalization of bubbles.
The analog of this when $i$ is odd is as follows.
Let $\Sym[\d]$ be the supercommutative superalgebra obtained from
$\Sym$ 
by adjoining an odd generator $\d$ with $\d^2 = 0$.
Then there exists a unique 
homomorphism
\begin{equation}\label{beta2}
\beta_{\lambda;i}:\Sym[\d] \rightarrow
\operatorname{End}_{\UU(\g)}(1_\lambda)
\end{equation}
such that
\begin{align*}
\qquad\quad
\e_n
&\mapsto
c_{\lambda;i}^{-1}\!\mathord{
\begin{tikzpicture}[baseline = 0]
  \draw[<-,thick,darkred] (0,0.4) to[out=180,in=90] (-.2,0.2);
  \draw[-,thick,darkred] (0.2,0.2) to[out=90,in=0] (0,.4);
 \draw[-,thick,darkred] (-.2,0.2) to[out=-90,in=180] (0,0);
  \draw[-,thick,darkred] (0,0) to[out=0,in=-90] (0.2,0.2);
 \node at (0,-.1) {$\scriptstyle{i}$};
   \node at (0.3,0.2) {$\scriptstyle{\lambda}$};
   \node at (-0.2,0.2) {$\color{darkred}\bullet$};
   \node at (-0.8,0.2) {$\color{darkred}\scriptstyle{2n+h-1}$};
\end{tikzpicture}
}\text{ if $n > -\frac{h}{2}$,}\!&
\h_n
&\mapsto
(-1)^nc_{\lambda;i}\!\mathord{
\begin{tikzpicture}[baseline = 0]
  \draw[->,thick,darkred] (0.2,0.2) to[out=90,in=0] (0,.4);
  \draw[-,thick,darkred] (0,0.4) to[out=180,in=90] (-.2,0.2);
\draw[-,thick,darkred] (-.2,0.2) to[out=-90,in=180] (0,0);
  \draw[-,thick,darkred] (0,0) to[out=0,in=-90] (0.2,0.2);
 \node at (0,-.08) {$\scriptstyle{i}$};
   \node at (-0.3,0.2) {$\scriptstyle{\lambda}$};
   \node at (0.2,0.2) {$\color{darkred}\bullet$};
   \node at (0.8,0.2) {$\color{darkred}\scriptstyle{2n-h-1}$};
\end{tikzpicture}
}\text{ if $n > \frac{h}{2}$,}\\
\qquad\quad \d\e_n
&\mapsto
c_{\lambda;i}^{-1}\!\mathord{
\begin{tikzpicture}[baseline = 0]
  \draw[<-,thick,darkred] (0,0.4) to[out=180,in=90] (-.2,0.2);
  \draw[-,thick,darkred] (0.2,0.2) to[out=90,in=0] (0,.4);
 \draw[-,thick,darkred] (-.2,0.2) to[out=-90,in=180] (0,0);
  \draw[-,thick,darkred] (0,0) to[out=0,in=-90] (0.2,0.2);
 \node at (0,-.1) {$\scriptstyle{i}$};
   \node at (0.3,0.2) {$\scriptstyle{\lambda}$};
   \node at (-0.2,0.2) {$\color{darkred}\bullet$};
   \node at (-0.7,0.2) {$\color{darkred}\scriptstyle{2n+h}$};
\end{tikzpicture}
}\text{ if $n \geq -\frac{h}{2}$,}\!
&\d\h_n
&\mapsto
(-1)^nc_{\lambda;i}\!\mathord{
\begin{tikzpicture}[baseline = 0]
  \draw[->,thick,darkred] (0.2,0.2) to[out=90,in=0] (0,.4);
  \draw[-,thick,darkred] (0,0.4) to[out=180,in=90] (-.2,0.2);
\draw[-,thick,darkred] (-.2,0.2) to[out=-90,in=180] (0,0);
  \draw[-,thick,darkred] (0,0) to[out=0,in=-90] (0.2,0.2);
 \node at (0,-.08) {$\scriptstyle{i}$};
   \node at (-0.3,0.2) {$\scriptstyle{\lambda}$};
   \node at (0.2,0.2) {$\color{darkred}\bullet$};
   \node at (0.7,0.2) {$\color{darkred}\scriptstyle{2n-h}$};
\end{tikzpicture}
}\text{ if $n \geq \frac{h}{2}$}.
\end{align*}
Furthermore, letting
\begin{equation}\label{SYM}
\SYM :=
\bigotimes_{i\,\text{even}} \Sym \otimes
\bigotimes_{i\,\text{odd}}
\Sym[\d]
\end{equation}
where the tensor products are taken in some fixed order,
there is {\em surjective} homomorphism
\begin{equation}\label{beta}
\beta_\lambda : \SYM
\twoheadrightarrow \operatorname{End}_{\UU(\g)}(1_\lambda),
\end{equation}
defined by taking the product of the maps $\beta_{\lambda;i}$ applied to the
$i$th tensor factor of $\SYM$ for all $i \in I$.
\item 
{\em Centrality of odd bubbles}:
Assuming $i \in I$ is odd, we
introduce the odd 2-morphism
\begin{equation}
\mathord{\begin{tikzpicture}[baseline = 0]
  \draw[-,thick,darkred] (0,0.3) to[out=180,in=90] (-.2,0.1);
  \draw[-,thick,darkred] (0.2,0.1) to[out=90,in=0] (0,.3);
 \draw[-,thick,darkred] (-.2,0.1) to[out=-90,in=180] (0,-0.1);
  \draw[-,thick,darkred] (0,-0.1) to[out=0,in=-90] (0.2,0.1);
  \draw[-,thick,darkred] (0.14,-0.05) to (-0.14,0.23);
  \draw[-,thick,darkred] (0.14,0.23) to (-0.14,-0.05);
 \node at (0,-.18) {$\scriptstyle{i}$};
   \node at (-0.35,0.1) {$\scriptstyle{\lambda}$};
\end{tikzpicture}
}
:= \beta_{\lambda;i}(\d).
\end{equation} 
We call this the {\em odd bubble} of color $i$. 
By the super interchange law
it squares
to zero:
\begin{equation}
\label{obprops}
\left(\mathord{
\begin{tikzpicture}[baseline = 0]
  \draw[-,thick,darkred] (0,0.3) to[out=180,in=90] (-.2,0.1);
  \draw[-,thick,darkred] (0.2,0.1) to[out=90,in=0] (0,.3);
 \draw[-,thick,darkred] (-.2,0.1) to[out=-90,in=180] (0,-0.1);
  \draw[-,thick,darkred] (0,-0.1) to[out=0,in=-90] (0.2,0.1);
  \draw[-,thick,darkred] (0.14,-0.05) to (-0.14,0.23);
  \draw[-,thick,darkred] (0.14,0.23) to (-0.14,-0.05);
 \node at (-.28,.03) {$\scriptstyle{i}$};
   \node at (0.3,0.35) {$\scriptstyle{\lambda}$};
\end{tikzpicture}
}\right)^{2} = 0.
\end{equation}
We show moreover that odd bubbles
are central in $\UU(\g)$
in the sense that
\begin{equation}\label{centrality}
\mathord{
\begin{tikzpicture}[baseline = 0]
  \draw[-,thick,darkred] (0,0.2) to[out=180,in=90] (-.2,0);
  \draw[-,thick,darkred] (0.2,0) to[out=90,in=0] (0,.2);
 \draw[-,thick,darkred] (-.2,0) to[out=-90,in=180] (0,-0.2);
  \draw[-,thick,darkred] (0,-0.2) to[out=0,in=-90] (0.2,0);
  \draw[-,thick,darkred] (0.14,-0.15) to (-0.14,0.13);
  \draw[-,thick,darkred] (0.14,0.13) to (-0.14,-0.15);
 \node at (-.28,-.07) {$\scriptstyle{i}$};
   \node at (.5,-.5) {$\scriptstyle{j}$};
   \node at (.7,0) {$\scriptstyle{\lambda}$};
	\draw[->,thick,darkred] (.5,-.4) to (.5,.4);
\end{tikzpicture}
}=
\mathord{
\begin{tikzpicture}[baseline = 0]
  \draw[-,thick,darkred] (0,0.2) to[out=180,in=90] (-.2,0);
  \draw[-,thick,darkred] (0.2,0) to[out=90,in=0] (0,.2);
 \draw[-,thick,darkred] (-.2,0) to[out=-90,in=180] (0,-0.2);
  \draw[-,thick,darkred] (0,-0.2) to[out=0,in=-90] (0.2,0);
  \draw[-,thick,darkred] (0.14,-0.15) to (-0.14,0.13);
  \draw[-,thick,darkred] (0.14,0.13) to (-0.14,-0.15);
 \node at (-.28,-.07) {$\scriptstyle{i}$};
   \node at (-.5,-.5) {$\scriptstyle{j}$};
   \node at (.4,0) {$\scriptstyle{\lambda}$};
	\draw[->,thick,darkred] (-.5,-.4) to (-.5,.4);
\end{tikzpicture}
},
\qquad
\mathord{
\begin{tikzpicture}[baseline = 0]
  \draw[-,thick,darkred] (0,0.2) to[out=180,in=90] (-.2,0);
  \draw[-,thick,darkred] (0.2,0) to[out=90,in=0] (0,.2);
 \draw[-,thick,darkred] (-.2,0) to[out=-90,in=180] (0,-0.2);
  \draw[-,thick,darkred] (0,-0.2) to[out=0,in=-90] (0.2,0);
  \draw[-,thick,darkred] (0.14,-0.15) to (-0.14,0.13);
  \draw[-,thick,darkred] (0.14,0.13) to (-0.14,-0.15);
 \node at (-.28,-.07) {$\scriptstyle{i}$};
   \node at (.5,.5) {$\scriptstyle{j}$};
   \node at (.7,0) {$\scriptstyle{\lambda}$};
	\draw[<-,thick,darkred] (.5,-.4) to (.5,.4);
\end{tikzpicture}
}=
\mathord{
\begin{tikzpicture}[baseline = 0]
  \draw[-,thick,darkred] (0,0.2) to[out=180,in=90] (-.2,0);
  \draw[-,thick,darkred] (0.2,0) to[out=90,in=0] (0,.2);
 \draw[-,thick,darkred] (-.2,0) to[out=-90,in=180] (0,-0.2);
  \draw[-,thick,darkred] (0,-0.2) to[out=0,in=-90] (0.2,0);
  \draw[-,thick,darkred] (0.14,-0.15) to (-0.14,0.13);
  \draw[-,thick,darkred] (0.14,0.13) to (-0.14,-0.15);
 \node at (-.28,-.07) {$\scriptstyle{i}$};
   \node at (-.5,.5) {$\scriptstyle{j}$};
   \node at (.4,0) {$\scriptstyle{\lambda}$};
	\draw[<-,thick,darkred] (-.5,-.4) to (-.5,.4);
\end{tikzpicture}
}
\end{equation}
for all $j \in I$.
(This means that it would be reasonable to set odd bubbles to
zero, imposing
additional relations
$\mathord{\begin{tikzpicture}[baseline = 0]
  \draw[-,thick,darkred] (0,0.3) to[out=180,in=90] (-.2,0.1);
  \draw[-,thick,darkred] (0.2,0.1) to[out=90,in=0] (0,.3);
 \draw[-,thick,darkred] (-.2,0.1) to[out=-90,in=180] (0,-0.1);
  \draw[-,thick,darkred] (0,-0.1) to[out=0,in=-90] (0.2,0.1);
  \draw[-,thick,darkred] (0.14,-0.05) to (-0.14,0.23);
  \draw[-,thick,darkred] (0.14,0.23) to (-0.14,-0.05);
 \node at (0,-.18) {$\scriptstyle{i}$};
   \node at (-0.35,0.1) {$\scriptstyle{\lambda}$};
\end{tikzpicture}
}
=0$ 
for all odd $i \in I$ and $\lambda \in P$.)
\item
{\em Cyclicity properties}:
If $i$ is even
then
\begin{equation}
\mathord{
\begin{tikzpicture}[baseline = 0]
  \draw[->,thick,darkred] (0.3,0) to (0.3,-.4);
	\draw[-,thick,darkred] (0.3,0) to[out=90, in=0] (0.1,0.4);
	\draw[-,thick,darkred] (0.1,0.4) to[out = 180, in = 90] (-0.1,0);
	\draw[-,thick,darkred] (-0.1,0) to[out=-90, in=0] (-0.3,-0.4);
	\draw[-,thick,darkred] (-0.3,-0.4) to[out = 180, in =-90] (-0.5,0);
  \draw[-,thick,darkred] (-0.5,0) to (-0.5,.4);
   \node at (-0.1,0) {$\color{darkred}\bullet$};
  \node at (-0.5,.5) {$\scriptstyle{i}$};
  \node at (0.5,0) {$\scriptstyle{\lambda}$};
\end{tikzpicture}
}=\:
\mathord{
\begin{tikzpicture}[baseline = 0]
  \draw[-,thick,darkred] (0.3,0) to (0.3,.4);
	\draw[-,thick,darkred] (0.3,0) to[out=-90, in=0] (0.1,-0.4);
	\draw[-,thick,darkred] (0.1,-0.4) to[out = 180, in = -90] (-0.1,0);
	\draw[-,thick,darkred] (-0.1,0) to[out=90, in=0] (-0.3,0.4);
	\draw[-,thick,darkred] (-0.3,0.4) to[out = 180, in =90] (-0.5,0);
  \draw[->,thick,darkred] (-0.5,0) to (-0.5,-.4);
   \node at (0.3,.5) {$\scriptstyle{i}$};
   \node at (0.5,0) {$\scriptstyle{\lambda}$};
   \node at (-0.1,0) {$\color{darkred}\bullet$};
\end{tikzpicture}
}\,,
\end{equation}
i.e. even dots are cyclic.
However if $i$ is odd we have that
\begin{equation}
\mathord{
\begin{tikzpicture}[baseline = 0]
  \draw[->,thick,darkred] (0.3,0) to (0.3,-.4);
	\draw[-,thick,darkred] (0.3,0) to[out=90, in=0] (0.1,0.4);
	\draw[-,thick,darkred] (0.1,0.4) to[out = 180, in = 90] (-0.1,0);
	\draw[-,thick,darkred] (-0.1,0) to[out=-90, in=0] (-0.3,-0.4);
	\draw[-,thick,darkred] (-0.3,-0.4) to[out = 180, in =-90] (-0.5,0);
  \draw[-,thick,darkred] (-0.5,0) to (-0.5,.4);
   \node at (-0.1,0) {$\color{darkred}\bullet$};
  \node at (-0.5,.5) {$\scriptstyle{i}$};
  \node at (0.5,0) {$\scriptstyle{\lambda}$};
\end{tikzpicture}
}=
2\:
\mathord{\begin{tikzpicture}[baseline = 0]
  \draw[-,thick,darkred] (0,0.3) to[out=180,in=90] (-.2,0.1);
  \draw[-,thick,darkred] (0.2,0.1) to[out=90,in=0] (0,.3);
 \draw[-,thick,darkred] (-.2,0.1) to[out=-90,in=180] (0,-0.1);
  \draw[-,thick,darkred] (0,-0.1) to[out=0,in=-90] (0.2,0.1);
  \draw[-,thick,darkred] (0.14,-0.05) to (-0.14,0.23);
  \draw[-,thick,darkred] (0.14,0.23) to (-0.14,-0.05);
 \node at (0,-.18) {$\scriptstyle{i}$};
\end{tikzpicture}
}
\mathord{
\begin{tikzpicture}[baseline = 0]
	\draw[<-,thick,darkred] (0.08,-.4) to (0.08,.4);
     \node at (0.08,.5) {$\scriptstyle{i}$};
\end{tikzpicture}
}
{\scriptstyle\lambda}
-
\mathord{
\begin{tikzpicture}[baseline = 0]
  \draw[-,thick,darkred] (0.3,0) to (0.3,.4);
	\draw[-,thick,darkred] (0.3,0) to[out=-90, in=0] (0.1,-0.4);
	\draw[-,thick,darkred] (0.1,-0.4) to[out = 180, in = -90] (-0.1,0);
	\draw[-,thick,darkred] (-0.1,0) to[out=90, in=0] (-0.3,0.4);
	\draw[-,thick,darkred] (-0.3,0.4) to[out = 180, in =90] (-0.5,0);
  \draw[->,thick,darkred] (-0.5,0) to (-0.5,-.4);
   \node at (0.3,.5) {$\scriptstyle{i}$};
   \node at (0.5,0) {$\scriptstyle{\lambda}$};
   \node at (-0.1,0) {$\color{darkred}\bullet$};
\end{tikzpicture}
}\,.
\end{equation}
In all cases, crossings satisfy
\begin{equation}
\mathord{
\begin{tikzpicture}[baseline = 0]
\draw[->,thick,darkred] (1.3,.4) to (1.3,-1.2);
\draw[-,thick,darkred] (-1.3,-.4) to (-1.3,1.2);
\draw[-,thick,darkred] (.5,1.1) to [out=0,in=90] (1.3,.4);
\draw[-,thick,darkred] (-.35,.4) to [out=90,in=180] (.5,1.1);
\draw[-,thick,darkred] (-.5,-1.1) to [out=180,in=-90] (-1.3,-.4);
\draw[-,thick,darkred] (.35,-.4) to [out=-90,in=0] (-.5,-1.1);
\draw[-,thick,darkred] (.35,-.4) to [out=90,in=-90] (-.35,.4);
        \draw[-,thick,darkred] (-0.35,-.5) to[out=0,in=180] (0.35,.5);
        \draw[->,thick,darkred] (0.35,.5) to[out=0,in=90] (0.8,-1.2);
        \draw[-,thick,darkred] (-0.35,-.5) to[out=180,in=-90] (-0.8,1.2);
  \node at (-0.8,1.3) {$\scriptstyle{i}$};
   \node at (-1.3,1.3) {$\scriptstyle{j}$};
   \node at (1.55,0) {$\scriptstyle{\lambda}$};
\end{tikzpicture}
}=
\mathord{
\begin{tikzpicture}[baseline = 0]
\draw[->,thick,darkred] (-1.3,.4) to (-1.3,-1.2);
\draw[-,thick,darkred] (1.3,-.4) to (1.3,1.2);
\draw[-,thick,darkred] (-.5,1.1) to [out=180,in=90] (-1.3,.4);
\draw[-,thick,darkred] (.35,.4) to [out=90,in=0] (-.5,1.1);
\draw[-,thick,darkred] (.5,-1.1) to [out=0,in=-90] (1.3,-.4);
\draw[-,thick,darkred] (-.35,-.4) to [out=-90,in=180] (.5,-1.1);
\draw[-,thick,darkred] (-.35,-.4) to [out=90,in=-90] (.35,.4);
        \draw[-,thick,darkred] (0.35,-.5) to[out=180,in=0] (-0.35,.5);
        \draw[->,thick,darkred] (-0.35,.5) to[out=180,in=90] (-0.8,-1.2);
        \draw[-,thick,darkred] (0.35,-.5) to[out=0,in=-90] (0.8,1.2);
  \node at (0.8,1.3) {$\scriptstyle{j}$};
   \node at (1.3,1.3) {$\scriptstyle{i}$};
   \node at (1.55,0) {$\scriptstyle{\lambda}$};
\end{tikzpicture}
}.
\end{equation}
\end{itemize}

\vspace{2mm}
\noindent
{\bf Nondegeneracy Conjecture.}
Let $F, G:\lambda \rightarrow \mu$ be some 1-morphisms in $\UU(\g)$.
In section~\ref{s6}, we 
construct an explicit set $\big\{f(\sigma)
\:\big|\:\sigma \in \widehat{M}(F,G)\big\}$
of 2-morphisms which
generates
$\Hom_{\UU(\g)}(F,G)$
as a right $\SYM$-module;
here the action of $p \in \SYM$ is by horizontally composing on the
right with $\beta_\lambda(p)$.
This puts us in position to formulate the following conjecture, which is the appropriate generalization 
of the nondegeneracy condition
formulated by Khovanov and Lauda in \cite[$\S$3.2.3]{KL3};
for example, taking $F = G = 1_\lambda$, 
it implies that the homomorphism $\beta_\lambda$
from (\ref{beta}) is an isomorphism.

\vspace{2mm}

\noindent
\underline{Conjecture}:
{\em $\Hom_{\UU(\g)}(F, G)$
is a free $\SYM$-module with basis $\big\{f(\sigma)\:\big|\:\sigma \in \widehat{M}(F,G)\big\}$.}
\vspace{2mm}

\noindent
We cannot prove this at present.
We will discuss its signficance and some possible approaches to its proof
later on in the introduction.

\vspace{2mm}
\noindent
{\bf Gradings.}
By a {\em graded superspace}, we mean a superspace equipped 
with an additional $\Z$-grading $V = \bigoplus_{n \in \Z} V_n =
\bigoplus_{n \in \Z} V_{n,\0}\oplus V_{n,\1}$.
Let $\GSVec$ be the symmetric monoidal category of graded superspaces and
degree-preserving even linear maps.
Mimicking Definition~\ref{defsupercat}, 
a {\em graded supercategory} means a $\GSVec$-enriched category.
Let $\GSCat$ be the monoidal category of all (small) graded
supercategories.
Finally, mimicking Definition~\ref{defsuper2cat},
a {\em graded 2-supercategory} means a category enriched in
$\GSCat$.
Thus, it is a 2-supercategory
whose 2-morphism
spaces are graded
superspaces, and horizontal and vertical composition respect these gradings.
We will soon need the following universal
construction from \cite[Definition 6.10]{BE}:

\begin{definition}\label{ernie}
Suppose that $\AA$ is a graded 2-supercategory.
Its {\em $(Q,\Pi)$-envelope} $\AA_{q,\pi}$ is the graded 
2-supercategory with the same objects as $\AA$, 1-morphisms defined from
$$
\Hom_{\AA_{q,\pi}}(\lambda,\mu) := \left\{Q^m \Pi^a F\:\big|\:\text{for all $F
  \in \Hom_{\AA}(\lambda,\mu)$, $m \in \Z$ and $a \in \Z/2$}\right\}
$$
with the horizontal composition law
$(Q^n \Pi^b G)(Q^m \Pi^a F) := Q^{m+n} \Pi^{a+b} (GF)$,
and 2-morphisms defined from
$$
\Hom_{\AA_{q,\pi}}(Q^m \Pi^a F, Q^n \Pi^b G)
:= \left\{x^{n,b}_{m,a}\:\big|\:\text{for all }x \in \Hom_{\AA}(F,G)\right\}
$$
viewed
as a superspace with operations
$x^{n,b}_{m,a} + y^{n,b}_{m,a} := (x+y)^{n,b}_{m,a}$, $c
(x^{n,b}_{m,a}) := (cx)^{n,b}_{m,a}$ for $c \in \k$,
and grading
$\deg(x^{n,b}_{m,a}):=\deg(x)+n-m$,
$\big|x^{n,b}_{m,a}\big| := 
|x|+a+b$.
Representing
$x^{n,b}_{m,a}$
by the picture
$$
\mathord{
\begin{tikzpicture}[baseline = 0]
	\draw[-,thick,darkred] (0.08,-.4) to (0.08,-.13);
	\draw[-,thick,darkred] (0.08,.4) to (0.08,.13);
      \draw[thick,darkred] (0.08,0) circle (4pt);
   \node at (0.08,0) {\color{darkred}$\scriptstyle{x}$};
   \node at (0.45,0.03) {$\scriptstyle{\lambda}$};
   \node at (-0.32,0) {$\scriptstyle{\mu}$};
   \node at (0.05,0.58) {$\scriptstyle{G}$};
   \node at (0.05,-0.55) {$\scriptstyle{F}$};
\draw[-,thin,red](.4,-.4) to (-.24,-.4);
\draw[-,thin,red](.4,.4) to (-.24,.4);
\node at (.5,.4) {$\color{red}\scriptstyle b$};
\node at (.5,-.4) {$\color{red}\scriptstyle a$};
\node at (-.4,.4) {$\color{red}\scriptstyle n$};
\node at (-.4,-.4) {$\color{red}\scriptstyle m$};
\end{tikzpicture}
}
$$
for $x$ as in (\ref{Xpic}),
the vertical and horizontal composition laws for 2-morphisms 
in $\AA_{q,\pi}$ are defined in terms of the ones in $\AA$
as follows:
\begin{align}
\mathord{
\begin{tikzpicture}[baseline = -1.3]
	\draw[-,thick,darkred] (0.08,-.4) to (0.08,-.13);
	\draw[-,thick,darkred] (0.08,.4) to (0.08,.13);
      \draw[thick,darkred] (0.08,0) circle (4pt);
   \node at (0.08,0) {\color{darkred}$\scriptstyle{y}$};
\draw[-,thin,red](.4,-.4) to (-.24,-.4);
\draw[-,thin,red](.4,.4) to (-.24,.4);
\node at (.5,.4) {$\color{red}\scriptstyle c$};
\node at (.5,-.4) {$\color{red}\scriptstyle b$};
\node at (-.42,.4) {$\color{red}\scriptstyle n$};
\node at (-.42,-.4) {$\color{red}\scriptstyle m$};
\end{tikzpicture}
}
\:\circ
\mathord{
\begin{tikzpicture}[baseline =-1.3]
	\draw[-,thick,darkred] (0.08,-.4) to (0.08,-.13);
	\draw[-,thick,darkred] (0.08,.4) to (0.08,.13);
      \draw[thick,darkred] (0.08,0) circle (4pt);
   \node at (0.08,0) {\color{darkred}$\scriptstyle{x}$};
\draw[-,thin,red](.4,-.4) to (-.24,-.4);
\draw[-,thin,red](.4,.4) to (-.24,.4);
\node at (.5,.4) {$\color{red}\scriptstyle b$};
\node at (.5,-.4) {$\color{red}\scriptstyle a$};
\node at (-.42,.4) {$\color{red}\scriptstyle m$};
\node at (-.42,-.4) {$\color{red}\scriptstyle l$};
\end{tikzpicture}
}
&:=\!
\mathord{
\begin{tikzpicture}[baseline = 8]
	\draw[-,thick,darkred] (0.08,-.4) to (0.08,-.13);
	\draw[-,thick,darkred] (0.08,.57) to (0.08,.13);
	\draw[-,thick,darkred] (0.08,.83) to (0.08,1.1);
      \draw[thick,darkred] (0.08,0) circle (4pt);
      \draw[thick,darkred] (0.08,.7) circle (4pt);
   \node at (0.08,0) {\color{darkred}$\scriptstyle{x}$};
   \node at (0.08,.71) {\color{darkred}$\scriptstyle{y}$};
\draw[-,thin,red](.4,-.4) to (-.24,-.4);
\draw[-,thin,red](.4,1.1) to (-.24,1.1);
\node at (.5,1.1) {$\color{red}\scriptstyle c$};
\node at (.5,-.4) {$\color{red}\scriptstyle a$};
\node at (-.42,1.1) {$\color{red}\scriptstyle n$};
\node at (-.42,-.4) {$\color{red}\scriptstyle l$};
\end{tikzpicture}
},\\
\mathord{
\begin{tikzpicture}[baseline =-1.3]
	\draw[-,thick,darkred] (0.08,-.4) to (0.08,-.13);
	\draw[-,thick,darkred] (0.08,.4) to (0.08,.13);
      \draw[thick,darkred] (0.08,0) circle (4pt);
   \node at (0.08,0) {\color{darkred}$\scriptstyle{y}$};
\draw[-,thin,red](.4,-.4) to (-.24,-.4);
\draw[-,thin,red](.4,.4) to (-.24,.4);
\node at (.5,.4) {$\color{red}\scriptstyle d$};
\node at (.5,-.4) {$\color{red}\scriptstyle c$};
\node at (-.42,.4) {$\color{red}\scriptstyle n$};
\node at (-.42,-.4) {$\color{red}\scriptstyle m$};
\end{tikzpicture}
}
\,
\mathord{
\begin{tikzpicture}[baseline = -1.3]
	\draw[-,thick,darkred] (0.08,-.4) to (0.08,-.13);
	\draw[-,thick,darkred] (0.08,.4) to (0.08,.13);
      \draw[thick,darkred] (0.08,0) circle (4pt);
   \node at (0.08,0) {\color{darkred}$\scriptstyle{x}$};
\draw[-,thin,red](.4,-.4) to (-.24,-.4);
\draw[-,thin,red](.4,.4) to (-.24,.4);
\node at (.5,.4) {$\color{red}\scriptstyle b$};
\node at (.5,-.4) {$\color{red}\scriptstyle a$};
\node at (-.38,.4) {$\color{red}\scriptstyle l$};
\node at (-.38,-.4) {$\color{red}\scriptstyle k$};
\end{tikzpicture}
}
&:=
(-1)^{c|x|+b|y|+ac+bc}
\mathord{
\begin{tikzpicture}[baseline = -2]
	\draw[-,thick,darkred] (0.08,-.4) to (0.08,-.13);
	\draw[-,thick,darkred] (0.08,.4) to (0.08,.13);
      \draw[thick,darkred] (0.08,0) circle (4pt);
   \node at (0.08,0) {\color{darkred}$\scriptstyle{x}$};
	\draw[-,thick,darkred] (-.8,-.4) to (-.8,-.13);
	\draw[-,thick,darkred] (-.8,.4) to (-.8,.13);
      \draw[thick,darkred] (-.8,0) circle (4pt);
   \node at (-.8,0) {\color{darkred}$\scriptstyle{y}$};
\draw[-,thin,red](.45,-.4) to (-1.1,-.4);
\draw[-,thin,red](.45,.4) to (-1.1,.4);
\node at (.74,.4) {$\color{red}\scriptstyle b+d$};
\node at (.74,-.4) {$\color{red}\scriptstyle a+c$};
\node at (-1.45,.4) {$\color{red}\scriptstyle l+n$};
\node at (-1.45,-.4) {$\color{red}\scriptstyle k+m$};
\end{tikzpicture}
}.\label{web}
\end{align}
For each object $\lambda$,
there are distinguished 1-morphisms
$q_\lambda := Q^{1} \Pi^{\0} 1_\lambda$,
$q_\lambda^{-1} := Q^{-1} \Pi^\0 1_\lambda$ and $\pi_\lambda := Q^0
\Pi^{\1} 1_\lambda$
in $\End_{\AA_{q,\pi}}(\lambda)$.
Moreover, there are 2-isomorphisms
$\sigma_\lambda:q_\lambda \stackrel{\sim}{\rightarrow} 1_\lambda$,
$\bar\sigma_\lambda:q^{-1}_\lambda \stackrel{\sim}{\rightarrow}
1_\lambda$ and
$\zeta_\lambda:\pi_\lambda \stackrel{\sim}{\rightarrow} 1_\lambda$, all
induced by the identity 2-morphism $1_{1_\lambda}$.
These give the required structure maps to make $\AA_{q,\pi}$ into a {\em graded 
$(Q,\Pi)$-2-supercategory} in the sense of \cite[Definition 6.5]{BE}.
\end{definition}

Assume for the remainder of the introduction that 
the Cartan matrix $A$ is symmetrizable,
so that there exist positive integers $(d_i)_{i \in I}$ such that $d_i d_{ij} = d_j d_{ji}$ for all
$i,j \in I$.
Assume moreover that $\k$ is a field, and that the parameters chosen above satisfy the 
following {\em homogeneity condition}:
\begin{equation}\label{hc}
s_{ij}^{pq} \neq 0 
 \Rightarrow 
p d_{ji} + q d_{ij} = d_{ij}d_{ji}.
\end{equation}
Then we can put an additional $\Z$-grading on 
$\UU(\g)$ making it into a
graded 2-supercategory,
by declaring that the generators 
from (\ref{solid1}) and (\ref{solid2}) 
are of the degrees listed in the following table:
$$
\begin{array}{|c|c|c|c|c|c|}
\hline
x&\tau&\eta&\eps&\eta'&\eps' \\\hline
2d_i&d_i d_{ij}&d_i(1+\langle h_i,\lambda\rangle)
&d_i(1-\langle h_i,\lambda\rangle)
&d_i(1-\langle h_i,\lambda\rangle)
&d_i(1+\langle h_i,\lambda\rangle)\\\hline
\end{array}
$$
Let $\UU_{q,\pi}(\g)$ denote the $(Q,\Pi)$-envelope of $\UU(\g)$
in the sense of Definition~\ref{ernie}.
The {\em underlying 2-category}
$\underline{\UU}_{q,\pi}(\g)$
consists of the same objects and
1-morphisms as
$\UU_{q,\pi}(\g)$ but only its even 2-morphisms of degree zero.
Also let
$\dot{\underline{\UU}}_{q,\pi}(\g)$ be the idempotent completion of the
  additive envelope of $\underline{\UU}_{q,\pi}(\g)$.
Both of $\underline{\UU}_{q,\pi}(\g)$ 
and
$\dot{\underline{\UU}}_{q,\pi}(\g)$ are
{\em $(Q, \Pi)$-2-categories} in
the sense of \cite[Definition 6.14]{BE}.
In particular, they are equipped with distinguished
objects $q = (q_\lambda)$ and $\pi =
(\pi_\lambda)$ in their 
Drinfeld centers.

\vspace{2mm}
\noindent
{\bf \boldmath Relation to the Ellis-Lauda 2-category.}
Suppose that $\g$ is {\em odd $\mathfrak{sl}_2$}, i.e. $I$ is an odd
  singleton.
Then the 
2-category $\dot{\underline{\UU}}_{q,\pi}(\g)$
is 2-equivalent to the 2-category
introduced \cite{EL}.
We do not think that this is an
important result going forward, so we will only give a rough sketch of its proof in the next
paragraph.
Our new approach to the definition
seems to be both conceptually more satisfactory
and less prone to errors when working with the relations.
So our point of view really is that, henceforth, 
one should simply replace the object in \cite{EL} with the one here.

Briefly, the idea is simply to construct quasi-inverse 2-functors
between the Ellis-Lauda 2-category $\UU_{EL}$ and our
$\dot{\underline{\UU}}_{q,\pi}(\g)$
by verifying relations. 
Let us write simply $E, F$ and $h$ for $E_i, F_i$ and $h_i$
for the unique $i \in I$.
Also we take $d_i := 1$ and 
identify $P\leftrightarrow\Z$ so $\lambda \leftrightarrow \langle
h,\lambda \rangle$.
Then,
the appropriate 2-functor in the direction
$\UU_{EL} \rightarrow
\dot{\underline{\UU}}_{q,\pi}(\g)$ 
is the identity on the object set $P$.
It sends the generating 1-morphisms $\mathtt{E} 1_\lambda, \mathtt{F} 1_\lambda$ and
$\mathtt{\Pi} 1_\lambda$ from \cite[$\S$3.2.1]{EL} 
to our 1-morphisms $E 1_\lambda, \Pi^{\bar\lambda+\1} F
1_\lambda$ and $\pi_\lambda$, respectively. 
On the generating 2-morphisms from \cite[$\S$3.2.2]{EL}, it goes as follows:
\begin{align*}
\mathord{\begin{tikzpicture}[baseline = 0]
	\draw[-,thick,blue,dashed] (0,0.08) to [out=160,in=-90] (-0.3,.4);
	\draw[->,thick,blue] (0.08,-.3) to (0.08,.4);
      \node at (0.08,0.05) {$\color{blue}\bullet$};
   \node at (.35,.05) {$\scriptstyle{\lambda}$};
\end{tikzpicture}
}
&\mapsto\mathord{
\begin{tikzpicture}[baseline = 0]
	\draw[->,thick,darkred] (0.08,-.3) to (0.08,.4);
      \node at (0.08,0.05) {$\color{darkred}\bullet$};
   \node at (.35,.05) {$\scriptstyle{\lambda}$};
\draw[-,thin,red](.4,-.3) to (-.22,-.3);
\draw[-,thin,red](.4,.4) to (-.22,.4);
\node at (.55,-.3) {$\color{red}\scriptstyle \0$};
\node at (.55,.4) {$\color{red}\scriptstyle \1$};
\node at (-.37,.4) {$\color{red}\scriptstyle 0$};
\node at (-.37,-.3) {$\color{red}\scriptstyle 2$};
\end{tikzpicture}
}
&
\mathord{
\begin{tikzpicture}[baseline = 0]
	\draw[-,thick,blue,dashed] (0,0.04) to [out=160,in=-90] (-0.53,.4);
	\draw[->,thick,blue] (0.28,-.3) to (-0.28,.4);
	\draw[->,thick,blue] (-0.28,-.3) to (0.28,.4);
   \node at (.4,.05) {$\scriptstyle{\lambda}$};
\end{tikzpicture}
}
&\mapsto
\mathord{
\begin{tikzpicture}[baseline = 0]
	\draw[->,thick,darkred] (0.28,-.3) to [out=90,in=-90] (-0.28,.4);
	\draw[->,thick,darkred] (-0.28,-.3) to 
[out=90,in=-90] (0.28,.4);
   \node at (.4,.05) {$\scriptstyle{\lambda}$};
\draw[-,thin,red](.4,-.3) to (-.4,-.3);
\draw[-,thin,red](.4,.4) to (-.4,.4);
\node at (.55,-.3) {$\color{red}\scriptstyle \0$};
\node at (.55,.4) {$\color{red}\scriptstyle \1$};
\node at (-.55,.4) {$\color{red}\scriptstyle 0$};
\node at (-.65,-.3) {$\color{red}\scriptstyle -2$};
\end{tikzpicture}
},
&
\mathord{
\begin{tikzpicture}[baseline = 0]
	\draw[-,thick,dashed,blue] (0.28,-.3) to (-0.28,.4);
	\draw[-,dashed,thick,blue] (-0.28,-.3) to (0.28,.4);
   \node at (.4,.05) {$\scriptstyle{\lambda}$};
\end{tikzpicture}
}
&\mapsto
-\mathord{
\begin{tikzpicture}[baseline = 0]
 \node at (0.05,0.05) {$\scriptstyle{\lambda}$};
\draw[-,thin,red](.25,-.25) to (-.25,-.25);
\draw[-,thin,red](.25,.35) to (-.25,.35);
\node at (.4,-.25) {$\color{red}\scriptstyle \0$};
\node at (.4,.35) {$\color{red}\scriptstyle \0$};
\node at (-.4,.35) {$\color{red}\scriptstyle 0$};
\node at (-.4,-.25) {$\color{red}\scriptstyle 0$};
\end{tikzpicture}
},
\\
\mathord{\begin{tikzpicture}[baseline = 0]
	\draw[-,thick,blue,dashed] (0.1,0.05) to [out=20,in=-90] (0.4,.4);
	\draw[<-,thick,blue] (0.08,-.3) to (0.08,.4);
      \node at (0.08,0.05) {$\color{blue}\bullet$};
   \node at (.35,-.05) {$\scriptstyle{\lambda}$};
\end{tikzpicture}
}
&\mapsto
(-1)^{\lambda+1}\!\!\!\!\mathord{
\begin{tikzpicture}[baseline = 0]
	\draw[-,thick,darkred] (0.28,.05) to (0.28,.5);
	\draw[->,thick,darkred] (-0.28,-.05) to (-0.28,-.4);
	\draw[-,thick,darkred] (-0.28,-.05) to [out=90,in=180](-0.14,.3);
	\draw[-,thick,darkred] (0,.05) to [out=90,in=0](-0.14,.3);
	\draw[-,thick,darkred] (0,.05) to [out=-90,in=180](0.14,-.2);
	\draw[-,thick,darkred] (0.28,.05) to [out=-90,in=0](0.14,-.2);
      \node at (0,0.05) {$\color{darkred}\bullet$};
   \node at (.45,.05) {$\scriptstyle{\lambda}$};
\draw[-,thin,red](.4,-.4) to (-.43,-.4);
\draw[-,thin,red](.4,.5) to (-.43,.5);
\node at (.52,.5) {$\color{red}\scriptstyle \bar\lambda$};
\node at (.68,-.4) {$\color{red}\scriptstyle \bar\lambda+\1$};
\node at (-.52,.5) {$\color{red}\scriptstyle 0$};
\node at (-.52,-.4) {$\color{red}\scriptstyle 2$};
\end{tikzpicture}
}\!\!\!\!,\!\!\!\!
&
\mathord{
\begin{tikzpicture}[baseline = 0]
	\draw[-,thick,blue,dashed] (0,0.04) to [out=20,in=-90] (0.53,.4);
	\draw[<-,thick,blue] (0.28,-.3) to (-0.28,.4);
	\draw[<-,thick,blue] (-0.28,-.3) to (0.28,.4);
   \node at (.4,-.05) {$\scriptstyle{\lambda}$};
\end{tikzpicture}
}
&\mapsto
-\mathord{
\begin{tikzpicture}[baseline = 0]
	\draw[-,thick,darkred] (0.3,.5) to (0.3,0);
	\draw[-,thick,darkred] (0.5,.5) to (0.5,-.05);
\draw[-,thick,darkred] (-0.5,.15) to [out=90,in=180] (-0.25,.42);
\draw[-,thick,darkred] (0.1,.2) to [out=90,in=0] (-0.25,.42);
\draw[-,thick,darkred] (0.5,-.05) to [out=-90,in=0] (0.25,-.32);
\draw[-,thick,darkred] (-0.1,-.1) to [out=-90,in=180] (0.25,-.32);
\draw[-,thick,darkred] (0.3,0) to [out=-90,in=0] (0.2,-.1);
\draw[-,thick,darkred] (-0.3,.1) to [out=90,in=180] (-0.2,.2);
\draw[-,thick,darkred] (0,.05) to [out=120,in=0] (-0.2,.2);
\draw[-,thick,darkred] (0,.05) to [out=-60,in=180] (0.2,-.1);
	\draw[<-,thick,darkred] (-0.3,-.4) to (-0.3,.1);
	\draw[<-,thick,darkred] (-0.5,-.4) to (-0.5,.2);
	\draw[-,thick,darkred] (-0.1,-.1) to [out=90,in=-90] (0.1,.2);
   \node at (.68,.05) {$\scriptstyle{\lambda}$};
\draw[-,thin,red](.6,-.4) to (-.6,-.4);
\draw[-,thin,red](.6,.5) to (-.6,.5);
\node at (.7,-.4) {$\color{red}\scriptstyle \0$};
\node at (.7,.5) {$\color{red}\scriptstyle \1$};
\node at (-.7,.5) {$\color{red}\scriptstyle 0$};
\node at (-.82,-.4) {$\color{red}\scriptstyle -2$};
\end{tikzpicture}
},\!\!
&
\mathord{
\begin{tikzpicture}[baseline = 0]
	\draw[-,thick,dashed,blue] (0.28,-.3) to (-0.28,.4);
	\draw[->,thick,blue] (-0.28,-.3) to (0.28,.4);
   \node at (.4,.05) {$\scriptstyle{\lambda}$};
\end{tikzpicture}
}
&\mapsto
\mathord{
\begin{tikzpicture}[baseline = 0]
 \node at (0.25,0.05) {$\scriptstyle{\lambda}$};
	\draw[->,thick,darkred] (0,-.25) to (0,.35);
\draw[-,thin,red](.25,-.25) to (-.25,-.25);
\draw[-,thin,red](.25,.35) to (-.25,.35);
\node at (.4,-.25) {$\color{red}\scriptstyle \1$};
\node at (.4,.35) {$\color{red}\scriptstyle \1$};
\node at (-.4,.35) {$\color{red}\scriptstyle 0$};
\node at (-.4,-.25) {$\color{red}\scriptstyle 0$};
\end{tikzpicture}
},
\\
\mathord{
\begin{tikzpicture}[baseline = 0]
	\draw[-,thick,dashed,blue] (0.28,-.3) to (-0.28,.4);
	\draw[<-,thick,blue] (-0.28,-.3) to (0.28,.4);
   \node at (.4,.05) {$\scriptstyle{\lambda}$};
\end{tikzpicture}
}
&\mapsto
\mathord{
\begin{tikzpicture}[baseline = 0]
 \node at (0.25,0.05) {$\scriptstyle{\lambda}$};
	\draw[<-,thick,darkred] (0,-.25) to (0,.35);
\draw[-,thin,red](.25,-.25) to (-.25,-.25);
\draw[-,thin,red](.25,.35) to (-.25,.35);
\node at (.4,-.25) {$\color{red}\scriptstyle \bar\lambda$};
\node at (.4,.35) {$\color{red}\scriptstyle \bar\lambda$};
\node at (-.4,.35) {$\color{red}\scriptstyle 0$};
\node at (-.4,-.25) {$\color{red}\scriptstyle 0$};
\end{tikzpicture}
},
&
\mathord{
\begin{tikzpicture}[baseline = 0]
	\draw[-,thick,dashed,blue] (-0.28,-.3) to (0.28,.4);
	\draw[<-,thick,blue] (0.28,-.3) to (-0.28,.4);
   \node at (.4,.05) {$\scriptstyle{\lambda}$};
\end{tikzpicture}
}
&\mapsto
\mathord{
\begin{tikzpicture}[baseline = 0]
 \node at (0.25,0.05) {$\scriptstyle{\lambda}$};
	\draw[<-,thick,darkred] (0,-.25) to (0,.35);
\draw[-,thin,red](.25,-.25) to (-.25,-.25);
\draw[-,thin,red](.25,.35) to (-.25,.35);
\node at (.4,-.25) {$\color{red}\scriptstyle \bar\lambda$};
\node at (.4,.35) {$\color{red}\scriptstyle \bar\lambda$};
\node at (-.4,.35) {$\color{red}\scriptstyle 0$};
\node at (-.4,-.25) {$\color{red}\scriptstyle 0$};
\end{tikzpicture}
},
&
\mathord{
\begin{tikzpicture}[baseline = 0]
	\draw[-,thick,dashed,blue] (-0.28,-.3) to (0.28,.4);
	\draw[->,thick,blue] (0.28,-.3) to (-0.28,.4);
   \node at (.4,.05) {$\scriptstyle{\lambda}$};
\end{tikzpicture}
}
&\mapsto
\mathord{
\begin{tikzpicture}[baseline = 0]
 \node at (0.25,0.05) {$\scriptstyle{\lambda}$};
	\draw[->,thick,darkred] (0,-.25) to (0,.35);
\draw[-,thin,red](.25,-.25) to (-.25,-.25);
\draw[-,thin,red](.25,.35) to (-.25,.35);
\node at (.4,-.25) {$\color{red}\scriptstyle \1$};
\node at (.4,.35) {$\color{red}\scriptstyle \1$};
\node at (-.4,.35) {$\color{red}\scriptstyle 0$};
\node at (-.4,-.25) {$\color{red}\scriptstyle 0$};
\end{tikzpicture}
},
\\
\mathord{
\begin{tikzpicture}[baseline = 0]
	\draw[-,thick,blue] (0.4,-0.3) to[out=90, in=0] (0.1,0.1);
	\draw[->,thick,blue] (0.1,0.1) to[out = 180, in = 90] (-0.2,-0.3);
 \node at (0.55,0) {$\scriptstyle{\lambda}$};
\end{tikzpicture}
}
&\mapsto
\mathord{
\begin{tikzpicture}[baseline = 0]
	\draw[-,thick,darkred] (0.4,-0.3) to[out=90, in=0] (0.1,0.1);
	\draw[->,thick,darkred] (0.1,0.1) to[out = 180, in = 90] (-0.2,-0.3);
 \node at (0.5,0.05) {$\scriptstyle{\lambda}$};
\draw[-,thin,red](.55,-.3) to (-.4,-.3);
\draw[-,thin,red](.55,.3) to (-.4,.3);
\node at (.9,-.3) {$\color{red}\scriptstyle \bar\lambda+\1$};
\node at (.7,.3) {$\color{red}\scriptstyle \0$};
\node at (-.55,.3) {$\color{red}\scriptstyle 0$};
\node at (-.73,-.3) {$\color{red}\scriptstyle \lambda+1$};
\end{tikzpicture}
}
,
&
\mathord{
\begin{tikzpicture}[baseline = 0]
	\draw[-,thick,blue] (0.4,0.3) to[out=-90, in=0] (0.1,-0.1);
	\draw[->,thick,blue] (0.1,-0.1) to[out = 180, in = -90] (-0.2,0.3);
 \node at (0.55,0.1) {$\scriptstyle{\lambda}$};
\end{tikzpicture}
}
&\mapsto
\mathord{
\begin{tikzpicture}[baseline = 0]
	\draw[-,thick,darkred] (0.4,0.3) to[out=-90, in=0] (0.1,-0.1);
	\draw[->,thick,darkred] (0.1,-0.1) to[out = 180, in = -90] (-0.2,0.3);
 \node at (0.5,0) {$\scriptstyle{\lambda}$};
\draw[-,thin,red](.55,-.3) to (-.4,-.3);
\draw[-,thin,red](.55,.3) to (-.4,.3);
\node at (.9,.3) {$\color{red}\scriptstyle \bar\lambda+\1$};
\node at (.7,-.3) {$\color{red}\scriptstyle \0$};
\node at (-.55,.3) {$\color{red}\scriptstyle 0$};
\node at (-.73,-.3) {$\color{red}\scriptstyle 1-\lambda$};
\end{tikzpicture}
},
&
\mathord{
\begin{tikzpicture}[baseline = 0]
	\draw[-,dashed,thick,blue] (0.4,-0.3) to[out=90, in=0] (0.1,0.1);
	\draw[-,dashed,thick,blue] (-0.2,-0.3) to[in = 180, out = 90] (0.1,0.1);
 \node at (0.55,0) {$\scriptstyle{\lambda}$};
\end{tikzpicture}
}
&\mapsto
\mathord{
\begin{tikzpicture}[baseline = 0]
 \node at (0.05,0.05) {$\scriptstyle{\lambda}$};
\draw[-,thin,red](.25,-.3) to (-.25,-.3);
\draw[-,thin,red](.25,.3) to (-.25,.3);
\node at (.4,-.3) {$\color{red}\scriptstyle \0$};
\node at (.4,.3) {$\color{red}\scriptstyle \0$};
\node at (-.4,.3) {$\color{red}\scriptstyle 0$};
\node at (-.4,-.3) {$\color{red}\scriptstyle 0$};
\end{tikzpicture}
},\\
\mathord{
\begin{tikzpicture}[baseline = 0]
	\draw[<-,thick,blue] (0.4,-0.3) to[out=90, in=0] (0.1,0.1);
	\draw[-,thick,blue] (0.1,0.1) to[out = 180, in = 90] (-0.2,-0.3);
 \node at (0.55,0) {$\scriptstyle{\lambda}$};
        \draw[-,dashed,thick,blue] (.13,.4) to (.13,.1);
        \draw[-,dashed,thick,blue] (.07,.4) to (.07,.1);
 \node at (0.1,0.5) {$\color{blue}\scriptscriptstyle{\lambda-1}$};
\end{tikzpicture}
}
&\mapsto
\mathord{
\begin{tikzpicture}[baseline = 0]
	\draw[<-,thick,darkred] (0.4,-0.3) to[out=90, in=0] (0.1,0.1);
	\draw[-,thick,darkred] (0.1,0.1) to[out = 180, in = 90] (-0.2,-0.3);
 \node at (0.5,0.05) {$\scriptstyle{\lambda}$};
\draw[-,thin,red](.55,-.3) to (-.4,-.3);
\draw[-,thin,red](.55,.3) to (-.4,.3);
\node at (.9,-.3) {$\color{red}\scriptstyle \bar\lambda+\1$};
\node at (.9,.3) {$\color{red}\scriptstyle \bar\lambda+\1$};
\node at (-.55,.3) {$\color{red}\scriptstyle 0$};
\node at (-.73,-.3) {$\color{red}\scriptstyle 1-\lambda$};
\end{tikzpicture}
}
,
&
\mathord{
\begin{tikzpicture}[baseline = 0]
	\draw[<-,thick,blue] (0.4,0.3) to[out=-90, in=0] (0.1,-0.1);
	\draw[-,thick,blue] (0.1,-0.1) to[out = 180, in = -90]
        (-0.2,0.3);
        \draw[-,dashed,thick,blue] (.13,.3) to (.13,-.1);
        \draw[-,dashed,thick,blue] (.07,.3) to (.07,-.1);
 \node at (0.1,0.4) {$\color{blue}\scriptscriptstyle{\lambda+1}$};
 \node at (0.55,0.1) {$\scriptstyle{\lambda}$};
\end{tikzpicture}
}
&\mapsto
\mathord{
\begin{tikzpicture}[baseline = 0]
	\draw[<-,thick,darkred] (0.4,0.3) to[out=-90, in=0] (0.1,-0.1);
	\draw[-,thick,darkred] (0.1,-0.1) to[out = 180, in = -90] (-0.2,0.3);
 \node at (0.5,0) {$\scriptstyle{\lambda}$};
\draw[-,thin,red](.55,-.3) to (-.4,-.3);
\draw[-,thin,red](.55,.3) to (-.4,.3);
\node at (.7,.3) {$\color{red}\scriptstyle \0$};
\node at (.7,-.3) {$\color{red}\scriptstyle \0$};
\node at (-.55,.3) {$\color{red}\scriptstyle 0$};
\node at (-.73,-.3) {$\color{red}\scriptstyle \lambda+1$};
\end{tikzpicture}
},
&
\mathord{
\begin{tikzpicture}[baseline = 0]
	\draw[-,dashed,thick,blue] (0.4,0.3) to[out=-90, in=0] (0.1,-0.1);
	\draw[-,dashed,thick,blue] (-0.2,0.3) to[in = 180, out = -90] (0.1,-0.1);
 \node at (0.55,0) {$\scriptstyle{\lambda}$};
\end{tikzpicture}
}
&\mapsto
\mathord{
\begin{tikzpicture}[baseline = 0]
 \node at (0.05,0.05) {$\scriptstyle{\lambda}$};
\draw[-,thin,red](.25,-.3) to (-.25,-.3);
\draw[-,thin,red](.25,.3) to (-.25,.3);
\node at (.4,-.3) {$\color{red}\scriptstyle \0$};
\node at (.4,.3) {$\color{red}\scriptstyle \0$};
\node at (-.4,.3) {$\color{red}\scriptstyle 0$};
\node at (-.4,-.3) {$\color{red}\scriptstyle 0$};
\end{tikzpicture}
}.
\end{align*}
We leave it to the reader to compare the relations in \cite{EL} with
our relations, and to construct a quasi-inverse 2-functor in the other
direction. In fact, when doing this carefully, one uncovers some
inconsistencies in the relations of \cite{EL};
e.g. the relation \cite[(3.1)]{EL} is wrong in the case $\lambda = 0$
(due to an error in the last sentence of the proof of \cite[Lemma
5.1]{EL} related to the nilpotency of the odd bubble).

\vspace{2mm}
\noindent
{\bf Decategorification Conjecture.}
Recall finally that the {\em Grothendieck ring} of an additive
2-category
$\AA$
is 
\begin{equation}\label{thegg}
K_0(\AA) := \bigoplus_{\lambda,\mu \in \ob \AA}
K_0(\mathcal{H}om_{\AA}(\lambda,\mu))
\end{equation}
where the $K_0$ on the right hand side is the usual split Grothendieck
group of the additive category
$\mathcal{H}om_{\AA}(\lambda,\mu)$.
It is a locally unital ring with distinguished idempotents
$\{1_\lambda\:|\:\lambda \in \ob \AA\}$.
If $\AA$ is a $(Q,\Pi)$-2-category, then
$K_0(\AA)$
is also linear over
$\LC := \Z[q,q^{-1},\pi] / (\pi^2-1)$, with $q$ and $\pi$ acting by
multiplication by the classes
of the
distinguished objects $q$ and $\pi$
of the Drinfeld center.

This discussion applies in particular to the $(Q,\Pi)$-2-category
$\dot{\underline{\UU}}_{q,\pi}(\g)$,
so that 
$K_0(\dot{\underline{\UU}}_{q,\pi}(\g))$
is a locally unital $\LC$-algebra with idempotents
$\{1_\lambda\:|\:\lambda \in P\}$.
Also let $\dot{U}_{q,\pi}(\g)_\LC$ be the $\LC$-form of the 
idempotented version of the
covering quantized enveloping algebra 
associated to $\g$
introduced by Clark, Hill and Wang in \cite{CHW1}; see section~\ref{sqg}.
By similar arguments to those of \cite{KL3}, using also some results
from \cite{HW}, we will show in section~\ref{sonto} that there is a {\em surjective}
homomorphism of locally unital $\LC$-algebras
\begin{equation}\label{gamma}
\gamma:\dot{U}_{q,\pi}(\g)_\LC \twoheadrightarrow K_0(\dot{\underline{\UU}}_{q,\pi}(\g))
\end{equation}
sending $e_i 1_\lambda$ and $f_i
1_\lambda$ to $[E_i 1_\lambda]$ and $[F_i 1_\lambda]$, respectively.
Moreover, also just like in \cite{KL3}, we will show in section 12 that
the Nondegeneracy
Conjecture formulated above, 
together with an additional assumption of bar-consistency on the Cartan datum, 
implies the truth of the following:

\vspace{2mm}
\noindent
\underline{Conjecture}:
{\em $\gamma$ is an isomorphism.}

\vspace{2mm}

\noindent
{\bf Discussion.}
In the purely even case, i.e. when all $i \in I$ are even,
the Nondegeneracy Conjecture (hence, the Decategorification Conjecture)
was established 
by Khovanov and Lauda in \cite[$\S$6.4]{KL3} in case
$\g = \mathfrak{sl}_n$.
In \cite{erratum},
Webster has proposed a proof of the
Nondegeneracy Conjecture for all purely even types.
There is also a 
completely different proof of the
Decategorification Conjecture
based on results of \cite{KK},
which is valid in all finite types; see e.g. \cite[Corollary 4.21]{BD}.

Turning to the odd case, the Decategorification Conjecture for odd
$\mathfrak{sl}_2$
is proved in \cite[Theorem 8.4]{EL}.
The only additional finite type possibilities 
come from {\em odd $\mathfrak{b}_n$}, i.e. type 
$\mathfrak{b}_n$ with the element of $I$ corresponding to the short simple
root chosen to be odd. 
For these, the Decategorification Conjecture 
may be deduced from \cite{KKO1,KKO2}.
We hope that Webster's methods from \cite{erratum}
can be extended to the super case to prove the Nondegeneracy
Conjecture in general, but there is a great deal of work still to be
done in order to see this through. As a first step, we would like to
see the proof of the Nondegeneracy Conjecture
from \cite{KL3} extended in order to
include all odd $\mathfrak{b}_n$, and hope to address this in
subsequent work.

Assuming the Decategorification Conjecture,
one gets an interesting basis for the covering quantum group
$\dot{U}_{q,\pi}(g)_\LC$ coming from the isomorphism classes of the indecomposable objects of 
$\dot{\underline{\UU}}_{q,\pi}(\g)$.
In symmetric types, this 
should coincide (up to parity shift) with the canonical basis from \cite[Theorem
4.14]{Clark}.
For odd $\mathfrak{b}_1$, this assertion follows already from the
results of \cite{EL}.

In a different direction, it should now be possible to develop super analogs of many of
the foundational structural results proved by Chuang-Rouquier and
Rouquier in \cite{CR, Rou}. 
Various applications, e.g. to spin representations of symmetric
groups and to representations of the Lie superalgebra
$\mathfrak{q}(n)$, are expected.

\section{More generators}\label{more}

In sections 2--8, we 
assume that the ground ring 
$\k$ is as in Definition~\ref{defsuperspace}, and let $\UU(\g)$ be the
Kac-Moody 2-supercategory from Definition~\ref{def1}.
We begin by defining various additional 2-morphisms in
$\UU(\g)$. 
 
\begin{definition}
We have the {\em downward dots and crossings}, which are
the right mates of the upward dots and crossings:
\begin{align}\label{xp}
&
\mathord{

\right.\label{lastone}\end{align}
\end{lemma}

\begin{proof}
Rotate (\ref{qhalast}) clockwise.
\end{proof}

\begin{definition}\label{curry}
For a supercategory $\A$, we write
$\A^{\operatorname{sop}}$ for the supercategory with the same
objects, morphisms $$\Hom_{\A^{\operatorname{sop}}}(\lambda, \mu) := 
\Hom_{\A}(\mu,\lambda),$$ 
and
new composition law defined from $f^{\operatorname{sop}}
\circ g^{\operatorname{sop}} := (-1)^{|f||g|}(g \circ
f)^{\operatorname{sop}}$,
where we
denote 
a morphism $f:\lambda\rightarrow \mu$ in $\mathcal A$ viewed as a morphism
in $\mathcal A^{\operatorname{sop}}$ by
$f^{\operatorname{sop}}:\mu \rightarrow \lambda$.
For a 2-supercategory $\AA$, we write $\AA^{\operatorname{sop}}$
for the 2-supercategory with the same
objects as $\AA$,
and morphism categories defined from
$\mathcal{H}om_{\AA^{\operatorname{sop}}}(\lambda,\mu) :=
\mathcal{H}om_{\AA}(\lambda,\mu)^{\operatorname{sop}}$. 
Horizontal composition in $\AA^{\operatorname{sop}}$ is
the same as in $\AA$.
Here is the check of the super interchange law in
$\AA^{\operatorname{sop}}$:
\begin{align*}
(x^{\operatorname{sop}} y^{\operatorname{sop}}) \circ
(u^{\operatorname{sop}}v^{\operatorname{sop}})
&=
(xy)^{\operatorname{sop}} \circ (uv)^{\operatorname{sop}}
=
(-1)^{(|x|+|y|)(|u|+|v|)}
((uv) \circ (xy))^{\operatorname{sop}}\\
&=
(-1)^{|x||u|+|y||u|+|y||v|}
((u \circ x)(v \circ y))^{\operatorname{sop}}\\
&=
(-1)^{|x||u|+|y||u|+|y||v|}
(u \circ x)^{\operatorname{sop}} (v \circ y)^{\operatorname{sop}}\\
&= (-1)^{|y||u|} (x^{\operatorname{sop}} \circ u^{\operatorname{sop}})
(y^{\operatorname{sop}} \circ v^{\operatorname{sop}}).
\end{align*}
\end{definition}

We will often appeal to the following proposition to establish mirror images of
relations in a horizontal axis. (This formulation is more convenient than the 
version in \cite[Theorem 2.3]{B}, since $\T$ really is an involution of
$\UU(\g)$ rather than a map to another Kac-Moody 2-category.)

\begin{proposition}\label{opiso}
There is a
$2$-supercategory isomorphism 
$\T:\UU(\g) \stackrel{\sim}{\rightarrow}
\UU(\g)^{\operatorname{sop}}$ defined by the strict 2-superfunctor
$\T$ given
on objects by $\T(\lambda) := -\lambda$, on generating
1-morphisms
by $\T(E_i 1_\lambda) := F_i 1_{-\lambda}$ and
$\T(F_i 1_\lambda) := E_i 1_{-\lambda}$, and on 
generating
2-morphisms by
\begin{align*}
\mathord{

}.
$$
It suffices to prove this under the assumption that $h \geq 0$;
the case $h < 0$ then follows by applying the Chevalley involution from
Proposition~\ref{opiso}.
Under this assumption, one vertically composes the $n=1$ case of
(\ref{rtcross1})--(\ref{rtcross2}) 
on top and bottom with a
leftward crossing, then simplifies
using (\ref{rory}) in case $i \neq j$ or
(\ref{first}) and
(\ref{rocksR})--(\ref{startd})
in case $i=j$.

For (\ref{leftclub})--(\ref{leftspade}),
we just need to prove the former, since the latter then
follows on applying $\T$.
When $i$ is even, (\ref{leftclub}) was already established in
\cite[Theorem 5.6]{B},
so let us assume for brevity that $i$ is odd (though the argument
here can easily be adapted to even $i$ too).
When $n=1$ we must prove:
$$
\mathord{
}.
\end{align*}
It remains to observe 
just like 
at the end of the proof of Corollary~\ref{advances}
that the
second summation on the right hand side vanishes.

Finally to deduce (\ref{dog1})--(\ref{dog2b}), use (\ref{upcross1})--(\ref{upcross2}) to commute the dots past the
upward crossing,
then convert the crossing to a rightward one
using (\ref{rightadj})--(\ref{sigrel}) and apply (\ref{everything}).
\end{proof}

\section{Left adjunction}\label{sr4}
The leftward cups and caps form the unit and counit of another adjunction.

\begin{lemma}\label{pitchforka}
The following relations hold:
\begin{align}\label{one}
\mathord{

}.\label{final2}
\end{align}
\end{proposition}

\begin{proof}
We get (\ref{final1}) in half of the cases from
Lemmas~\ref{pitchforka} and \ref{pitchforkb}.
To deduce the other half of the cases, attach
leftward cups (resp.\ caps) to the two strands at the bottom 
(resp. the top) of the relations established in these two lemma, then simplify using
(\ref{adjfinal}).
Finally (\ref{final2}) follows from
(\ref{final1}) using Proposition~\ref{opiso} as usual.
\end{proof}

The final two propositions of the section extend
\cite[Propositions 3.3--3.5]{KL3}.

\begin{proposition}\label{bs}
The following hold for all $n \geq 0$ and $\lambda \in P$.
\begin{itemize}
\item[(i)]
If $i$ is even then
\begin{align}\label{bs1}
\mathord{
\begin{tikzpicture}[baseline = 0]
	\draw[->,thick,darkred] (0.08,-.4) to (0.08,.4);
     \node at (0.08,-.5) {$\scriptstyle{i}$};
\end{tikzpicture}
}
{\scriptstyle\lambda}
\,\:\mathord{\begin{tikzpicture}[baseline = 0]
  \draw[-,thick,darkred] (0,0.2) to[out=180,in=90] (-.2,0);
  \draw[->,thick,darkred] (0.2,0) to[out=90,in=0] (0,.2);
 \draw[-,thick,darkred] (-.2,0) to[out=-90,in=180] (0,-0.2);
  \draw[-,thick,darkred] (0,-0.2) to[out=0,in=-90] (0.2,0);
 \node at (0,-.28) {$\scriptstyle{i}$};
   \node at (0.6,0) {$\color{darkred}\scriptstyle{n+*}$};
      \node at (0.2,0) {$\color{darkred}\bullet$};
\end{tikzpicture}
}
&=
\sum_{r \geq 0}
(r+1)
\:\mathord{\begin{tikzpicture}[baseline = 0]
  \draw[-,thick,darkred] (0,0.2) to[out=180,in=90] (-.2,0);
  \draw[->,thick,darkred] (0.2,0) to[out=90,in=0] (0,.2);
 \draw[-,thick,darkred] (-.2,0) to[out=-90,in=180] (0,-0.2);
  \draw[-,thick,darkred] (0,-0.2) to[out=0,in=-90] (0.2,0);
 \node at (0,-.28) {$\scriptstyle{i}$};
   \node at (.8,0) {$\color{darkred}\scriptstyle{n-r+*}$};
      \node at (.2,0) {$\color{darkred}\bullet$};
\end{tikzpicture}
}
\!\!\mathord{
\begin{tikzpicture}[baseline = 0]
	\draw[->,thick,darkred] (0.08,-.4) to (0.08,.4);
   \node at (.3,0) {$\color{darkred}\scriptstyle{r}$};
      \node at (.08,0) {$\color{darkred}\bullet$};
     \node at (0.08,-.5) {$\scriptstyle{i}$};
\end{tikzpicture}
}
\!
{\scriptstyle\lambda}\,,
\\\label{bs2}
\mathord{\begin{tikzpicture}[baseline = 0]
  \draw[<-,thick,darkred] (0,0.2) to[out=180,in=90] (-.2,0);
  \draw[-,thick,darkred] (0.2,0) to[out=90,in=0] (0,.2);
 \draw[-,thick,darkred] (-.2,0) to[out=-90,in=180] (0,-0.2);
  \draw[-,thick,darkred] (0,-0.2) to[out=0,in=-90] (0.2,0);
 \node at (0,-.28) {$\scriptstyle{i}$};
   \node at (-0.6,0) {$\color{darkred}\scriptstyle{n+*}$};
      \node at (-0.2,0) {$\color{darkred}\bullet$};
\end{tikzpicture}
}
\:\mathord{
\begin{tikzpicture}[baseline = 0]
	\draw[->,thick,darkred] (0.08,-.4) to (0.08,.4);
     \node at (0.08,-.5) {$\scriptstyle{i}$};
\end{tikzpicture}
}
{\scriptstyle\lambda}
&=
\sum_{r \geq 0}
(r+1)
\mathord{
\begin{tikzpicture}[baseline = 0]
	\draw[->,thick,darkred] (0.08,-.4) to (0.08,.4);
   \node at (-.15,0) {$\color{darkred}\scriptstyle{r}$};
      \node at (.08,0) {$\color{darkred}\bullet$};
     \node at (0.08,-.5) {$\scriptstyle{i}$};
\end{tikzpicture}
}
{\scriptstyle\lambda}
\:\mathord{\begin{tikzpicture}[baseline = 0]
  \draw[<-,thick,darkred] (0,0.2) to[out=180,in=90] (-.2,0);
  \draw[-,thick,darkred] (0.2,0) to[out=90,in=0] (0,.2);
 \draw[-,thick,darkred] (-.2,0) to[out=-90,in=180] (0,-0.2);
  \draw[-,thick,darkred] (0,-0.2) to[out=0,in=-90] (0.2,0);
 \node at (0,-.28) {$\scriptstyle{i}$};
   \node at (-.8,0) {$\color{darkred}\scriptstyle{n-r+*}$};
      \node at (-.2,0) {$\color{darkred}\bullet$};
\end{tikzpicture}
}\,.
\end{align}
\item[(ii)]
If $i$ is odd then
\begin{align}\label{bs3}
\mathord{
\begin{tikzpicture}[baseline = 0]
	\draw[->,thick,darkred] (0.08,-.4) to (0.08,.4);
     \node at (0.08,-.5) {$\scriptstyle{i}$};
\end{tikzpicture}
}
{\scriptstyle\lambda}
\,\:\mathord{\begin{tikzpicture}[baseline = 0]
  \draw[-,thick,darkred] (0,0.2) to[out=180,in=90] (-.2,0);
  \draw[->,thick,darkred] (0.2,0) to[out=90,in=0] (0,.2);
 \draw[-,thick,darkred] (-.2,0) to[out=-90,in=180] (0,-0.2);
  \draw[-,thick,darkred] (0,-0.2) to[out=0,in=-90] (0.2,0);
 \node at (0,-.28) {$\scriptstyle{i}$};
   \node at (0.6,0) {$\color{darkred}\scriptstyle{n+*}$};
      \node at (0.2,0) {$\color{darkred}\bullet$};
\end{tikzpicture}
}
&=
\sum_{r \geq 0}
(2r+1)
\:\mathord{\begin{tikzpicture}[baseline = 0]
  \draw[-,thick,darkred] (0,0.2) to[out=180,in=90] (-.2,0);
  \draw[->,thick,darkred] (0.2,0) to[out=90,in=0] (0,.2);
 \draw[-,thick,darkred] (-.2,0) to[out=-90,in=180] (0,-0.2);
  \draw[-,thick,darkred] (0,-0.2) to[out=0,in=-90] (0.2,0);
 \node at (0,-.28) {$\scriptstyle{i}$};
   \node at (.85,0) {$\color{darkred}\scriptstyle{n-2r+*}$};
      \node at (.2,0) {$\color{darkred}\bullet$};
\end{tikzpicture}
}
\!\!\mathord{
\begin{tikzpicture}[baseline = 0]
	\draw[->,thick,darkred] (0.08,-.4) to (0.08,.4);
   \node at (.35,0) {$\color{darkred}\scriptstyle{2r}$};
      \node at (.08,0) {$\color{darkred}\bullet$};
     \node at (0.08,-.5) {$\scriptstyle{i}$};
\end{tikzpicture}
}
\!
{\scriptstyle\lambda}\,,
\\\label{bs4}
\mathord{\begin{tikzpicture}[baseline = 0]
  \draw[<-,thick,darkred] (0,0.2) to[out=180,in=90] (-.2,0);
  \draw[-,thick,darkred] (0.2,0) to[out=90,in=0] (0,.2);
 \draw[-,thick,darkred] (-.2,0) to[out=-90,in=180] (0,-0.2);
  \draw[-,thick,darkred] (0,-0.2) to[out=0,in=-90] (0.2,0);
 \node at (0,-.28) {$\scriptstyle{i}$};
   \node at (-0.6,0) {$\color{darkred}\scriptstyle{n+*}$};
      \node at (-0.2,0) {$\color{darkred}\bullet$};
\end{tikzpicture}
}
\:\mathord{
\begin{tikzpicture}[baseline = 0]
	\draw[->,thick,darkred] (0.08,-.4) to (0.08,.4);
     \node at (0.08,-.5) {$\scriptstyle{i}$};
\end{tikzpicture}
}
{\scriptstyle\lambda}
&=
\sum_{r \geq 0}
(2r+1)
\mathord{
\begin{tikzpicture}[baseline = 0]
	\draw[->,thick,darkred] (0.08,-.4) to (0.08,.4);
   \node at (-.2,0) {$\color{darkred}\scriptstyle{2r}$};
      \node at (.08,0) {$\color{darkred}\bullet$};
     \node at (0.08,-.5) {$\scriptstyle{i}$};
\end{tikzpicture}
}
{\scriptstyle\lambda}
\:\mathord{\begin{tikzpicture}[baseline = 0]
  \draw[<-,thick,darkred] (0,0.2) to[out=180,in=90] (-.2,0);
  \draw[-,thick,darkred] (0.2,0) to[out=90,in=0] (0,.2);
 \draw[-,thick,darkred] (-.2,0) to[out=-90,in=180] (0,-0.2);
  \draw[-,thick,darkred] (0,-0.2) to[out=0,in=-90] (0.2,0);
 \node at (0,-.28) {$\scriptstyle{i}$};
   \node at (-.85,0) {$\color{darkred}\scriptstyle{n-2r+*}$};
      \node at (-.2,0) {$\color{darkred}\bullet$};
\end{tikzpicture}
}\,.
\end{align}
\item[(iii)]
For $i \neq j$ with $d_{ij} > 0$ we have that
\begin{align}\label{bs5}
\mathord{
\begin{tikzpicture}[baseline = 0]
	\draw[->,thick,darkred] (0.08,-.4) to (0.08,.4);
     \node at (0.08,-.5) {$\scriptstyle{j}$};
\end{tikzpicture}
}
\!{\scriptstyle\lambda}
\:\mathord{\begin{tikzpicture}[baseline = 0]
  \draw[-,thick,darkred] (0,0.2) to[out=180,in=90] (-.2,0);
  \draw[->,thick,darkred] (0.2,0) to[out=90,in=0] (0,.2);
 \draw[-,thick,darkred] (-.2,0) to[out=-90,in=180] (0,-0.2);
  \draw[-,thick,darkred] (0,-0.2) to[out=0,in=-90] (0.2,0);
 \node at (0,-.28) {$\scriptstyle{i}$};
   \node at (0.6,0) {$\color{darkred}\scriptstyle{n+*}$};
      \node at (0.2,0) {$\color{darkred}\bullet$};
\end{tikzpicture}
}
&=
t_{ij}
\,\mathord{\begin{tikzpicture}[baseline = 0]
  \draw[-,thick,darkred] (0,0.2) to[out=180,in=90] (-.2,0);
  \draw[->,thick,darkred] (0.2,0) to[out=90,in=0] (0,.2);
 \draw[-,thick,darkred] (-.2,0) to[out=-90,in=180] (0,-0.2);
  \draw[-,thick,darkred] (0,-0.2) to[out=0,in=-90] (0.2,0);
 \node at (0,-.28) {$\scriptstyle{i}$};
   \node at (.6,0) {$\color{darkred}\scriptstyle{n+*}$};
      \node at (.2,0) {$\color{darkred}\bullet$};
\end{tikzpicture}
}
\!\!\mathord{
\begin{tikzpicture}[baseline = 0]
	\draw[->,thick,darkred] (0.08,-.4) to (0.08,.4);
     \node at (0.08,-.5) {$\scriptstyle{j}$};
\end{tikzpicture}
}
\!
{\scriptstyle\lambda}
+
t_{ji}\,
\mathord{\begin{tikzpicture}[baseline = 0]
  \draw[-,thick,darkred] (0,0.2) to[out=180,in=90] (-.2,0);
  \draw[->,thick,darkred] (0.2,0) to[out=90,in=0] (0,.2);
 \draw[-,thick,darkred] (-.2,0) to[out=-90,in=180] (0,-0.2);
  \draw[-,thick,darkred] (0,-0.2) to[out=0,in=-90] (0.2,0);
 \node at (0,-.28) {$\scriptstyle{i}$};
   \node at (.85,0) {$\color{darkred}\scriptstyle{n-d_{ij}+*}$};
      \node at (.2,0) {$\color{darkred}\bullet$};
\end{tikzpicture}
}
\!\!\mathord{
\begin{tikzpicture}[baseline = 0]
	\draw[->,thick,darkred] (0.08,-.4) to (0.08,.4);
   \node at (.4,0) {$\color{darkred}\scriptstyle{d_{ji}}$};
      \node at (.08,0) {$\color{darkred}\bullet$};
     \node at (0.08,-.5) {$\scriptstyle{j}$};
\end{tikzpicture}
}
\!
{\scriptstyle\lambda}
+
\!\!\displaystyle
\sum_{\substack{0 < p < d_{ij}\\0 < q < d_{ji}}}
\!\!
s_{ij}^{pq}
\:\mathord{\begin{tikzpicture}[baseline = 0]
  \draw[-,thick,darkred] (0,0.2) to[out=180,in=90] (-.2,0);
  \draw[->,thick,darkred] (0.2,0) to[out=90,in=0] (0,.2);
 \draw[-,thick,darkred] (-.2,0) to[out=-90,in=180] (0,-0.2);
  \draw[-,thick,darkred] (0,-0.2) to[out=0,in=-90] (0.2,0);
 \node at (0,-.28) {$\scriptstyle{i}$};
   \node at (1.05,0) {$\color{darkred}\scriptstyle{n+p-d_{ij}+*}$};
      \node at (.2,0) {$\color{darkred}\bullet$};
\end{tikzpicture}
}
\mathord{
\begin{tikzpicture}[baseline = 0]
	\draw[->,thick,darkred] (0.08,-.4) to (0.08,.4);
   \node at (.3,0) {$\color{darkred}\scriptstyle{q}$};
      \node at (.08,0) {$\color{darkred}\bullet$};
     \node at (0.08,-.5) {$\scriptstyle{j}$};
\end{tikzpicture}
}
\!
{\scriptstyle\lambda},\\\label{bs6}
\mathord{\begin{tikzpicture}[baseline = 0]
  \draw[<-,thick,darkred] (0,0.2) to[out=180,in=90] (-.2,0);
  \draw[-,thick,darkred] (0.2,0) to[out=90,in=0] (0,.2);
 \draw[-,thick,darkred] (-.2,0) to[out=-90,in=180] (0,-0.2);
  \draw[-,thick,darkred] (0,-0.2) to[out=0,in=-90] (0.2,0);
 \node at (0,-.28) {$\scriptstyle{i}$};
   \node at (-0.6,0) {$\color{darkred}\scriptstyle{n+*}$};
      \node at (-0.2,0) {$\color{darkred}\bullet$};
\end{tikzpicture}
}
\mathord{
\begin{tikzpicture}[baseline = 0]
	\draw[->,thick,darkred] (0.08,-.4) to (0.08,.4);
     \node at (0.08,-.5) {$\scriptstyle{j}$};
\end{tikzpicture}
}
{\scriptstyle\lambda}
&=
t_{ij}
\mathord{
\begin{tikzpicture}[baseline = 0]
	\draw[->,thick,darkred] (0.08,-.4) to (0.08,.4);
     \node at (0.08,-.5) {$\scriptstyle{j}$};
\end{tikzpicture}
}
\!\mathord{\begin{tikzpicture}[baseline = 0]
  \draw[<-,thick,darkred] (0,0.2) to[out=180,in=90] (-.2,0);
  \draw[-,thick,darkred] (0.2,0) to[out=90,in=0] (0,.2);
 \draw[-,thick,darkred] (-.2,0) to[out=-90,in=180] (0,-0.2);
  \draw[-,thick,darkred] (0,-0.2) to[out=0,in=-90] (0.2,0);
 \node at (0,-.28) {$\scriptstyle{i}$};
   \node at (-.6,0) {$\color{darkred}\scriptstyle{n+*}$};
      \node at (-.2,0) {$\color{darkred}\bullet$};
\end{tikzpicture}
}\,
{\scriptstyle\lambda}
+
t_{ji}\!
\mathord{
\begin{tikzpicture}[baseline = 0]
	\draw[->,thick,darkred] (0.08,-.4) to (0.08,.4);
   \node at (-.2,0) {$\color{darkred}\scriptstyle{d_{ji}}$};
      \node at (.08,0) {$\color{darkred}\bullet$};
     \node at (0.08,-.5) {$\scriptstyle{j}$};
\end{tikzpicture}
}\!\!
\mathord{\begin{tikzpicture}[baseline = 0]
  \draw[<-,thick,darkred] (0,0.2) to[out=180,in=90] (-.2,0);
  \draw[-,thick,darkred] (0.2,0) to[out=90,in=0] (0,.2);
 \draw[-,thick,darkred] (-.2,0) to[out=-90,in=180] (0,-0.2);
  \draw[-,thick,darkred] (0,-0.2) to[out=0,in=-90] (0.2,0);
 \node at (0,-.28) {$\scriptstyle{i}$};
   \node at (-.85,0) {$\color{darkred}\scriptstyle{n-d_{ij}+*}$};
      \node at (-.2,0) {$\color{darkred}\bullet$};
\end{tikzpicture}
}\,
{\scriptstyle\lambda}
+
\!\!\displaystyle
\sum_{\substack{0 < p < d_{ij}\\0 < q < d_{ji}}}
\!\!
s_{ij}^{pq}
\:
\mathord{
\begin{tikzpicture}[baseline = 0]
	\draw[->,thick,darkred] (0.08,-.4) to (0.08,.4);
   \node at (-.15,0) {$\color{darkred}\scriptstyle{q}$};
      \node at (.08,0) {$\color{darkred}\bullet$};
     \node at (0.08,-.5) {$\scriptstyle{j}$};
\end{tikzpicture}
}\!
\mathord{\begin{tikzpicture}[baseline = 0]
  \draw[<-,thick,darkred] (0,0.2) to[out=180,in=90] (-.2,0);
  \draw[-,thick,darkred] (0.2,0) to[out=90,in=0] (0,.2);
 \draw[-,thick,darkred] (-.2,0) to[out=-90,in=180] (0,-0.2);
  \draw[-,thick,darkred] (0,-0.2) to[out=0,in=-90] (0.2,0);
 \node at (0,-.28) {$\scriptstyle{i}$};
   \node at (-1.05,0) {$\color{darkred}\scriptstyle{n+p-d_{ij}+*}$};
      \node at (-.2,0) {$\color{darkred}\bullet$};
\end{tikzpicture}
}\,
{\scriptstyle\lambda}.
\end{align}
\item[(iv)]
For $i \neq j$ with $d_{ij} = 0$ we have that
\begin{align}\label{bs7}
\mathord{
\begin{tikzpicture}[baseline = 0]
	\draw[->,thick,darkred] (0.08,-.4) to (0.08,.4);
     \node at (0.08,-.5) {$\scriptstyle{j}$};
\end{tikzpicture}
}
\!{\scriptstyle\lambda}
\:\mathord{\begin{tikzpicture}[baseline = 0]
  \draw[-,thick,darkred] (0,0.2) to[out=180,in=90] (-.2,0);
  \draw[->,thick,darkred] (0.2,0) to[out=90,in=0] (0,.2);
 \draw[-,thick,darkred] (-.2,0) to[out=-90,in=180] (0,-0.2);
  \draw[-,thick,darkred] (0,-0.2) to[out=0,in=-90] (0.2,0);
 \node at (0,-.28) {$\scriptstyle{i}$};
   \node at (0.6,0) {$\color{darkred}\scriptstyle{n+*}$};
      \node at (0.2,0) {$\color{darkred}\bullet$};
\end{tikzpicture}
}
&=
\,\mathord{\begin{tikzpicture}[baseline = 0]
  \draw[-,thick,darkred] (0,0.2) to[out=180,in=90] (-.2,0);
  \draw[->,thick,darkred] (0.2,0) to[out=90,in=0] (0,.2);
 \draw[-,thick,darkred] (-.2,0) to[out=-90,in=180] (0,-0.2);
  \draw[-,thick,darkred] (0,-0.2) to[out=0,in=-90] (0.2,0);
 \node at (0,-.28) {$\scriptstyle{i}$};
   \node at (.6,0) {$\color{darkred}\scriptstyle{n+*}$};
      \node at (.2,0) {$\color{darkred}\bullet$};
\end{tikzpicture}
}
\!\!\mathord{
\begin{tikzpicture}[baseline = 0]
	\draw[->,thick,darkred] (0.08,-.4) to (0.08,.4);
     \node at (0.08,-.5) {$\scriptstyle{j}$};
\end{tikzpicture}
}
\!
{\scriptstyle\lambda},\\
\mathord{\begin{tikzpicture}[baseline = 0]
  \draw[<-,thick,darkred] (0,0.2) to[out=180,in=90] (-.2,0);
  \draw[-,thick,darkred] (0.2,0) to[out=90,in=0] (0,.2);
 \draw[-,thick,darkred] (-.2,0) to[out=-90,in=180] (0,-0.2);
  \draw[-,thick,darkred] (0,-0.2) to[out=0,in=-90] (0.2,0);
 \node at (0,-.28) {$\scriptstyle{i}$};
   \node at (-0.6,0) {$\color{darkred}\scriptstyle{n+*}$};
      \node at (-0.2,0) {$\color{darkred}\bullet$};
\end{tikzpicture}
}\:
\mathord{
\begin{tikzpicture}[baseline = 0]
	\draw[->,thick,darkred] (0.08,-.4) to (0.08,.4);
     \node at (0.08,-.5) {$\scriptstyle{j}$};
\end{tikzpicture}
}
{\scriptstyle\lambda}
&=
\mathord{
\begin{tikzpicture}[baseline = 0]
	\draw[->,thick,darkred] (0.08,-.4) to (0.08,.4);
     \node at (0.08,-.5) {$\scriptstyle{j}$};
\end{tikzpicture}
}
\!\mathord{\begin{tikzpicture}[baseline = 0]
  \draw[<-,thick,darkred] (0,0.2) to[out=180,in=90] (-.2,0);
  \draw[-,thick,darkred] (0.2,0) to[out=90,in=0] (0,.2);
 \draw[-,thick,darkred] (-.2,0) to[out=-90,in=180] (0,-0.2);
  \draw[-,thick,darkred] (0,-0.2) to[out=0,in=-90] (0.2,0);
 \node at (0,-.28) {$\scriptstyle{i}$};
   \node at (-.6,0) {$\color{darkred}\scriptstyle{n+*}$};
      \node at (-.2,0) {$\color{darkred}\bullet$};
\end{tikzpicture}
}\,
{\scriptstyle\lambda}.\label{bs8}
\end{align}
\end{itemize}
\end{proposition}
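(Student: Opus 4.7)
All four cases (i)--(iv) follow a common pattern: rewrite the bubble on the left-hand side using the infinite Grassmannian relations (Proposition~\ref{IG}) so that it is expressed in a single ``one-sided'' form, convert it into a composition of a rightward cup and cap via~(\ref{everything}), slide the vertical strand between the internal $i$-strands of the bubble using the pitchfork identities~(\ref{rightpitchfork})--(\ref{rightpitchfork2}) together with the QHA relations~(\ref{now}) or~(\ref{qha}), and finally refold the internal configuration back into bubbles using Proposition~\ref{IG} again. The dot-slide lemmas~\ref{firstlemma} and~\ref{leftdot} feed into the intermediate rewriting at every step.

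\textbf{Parts (iii) and (iv), $i \neq j$.} First I would treat these, since they contain no internal dot-commutation. Expand the bubble using~(\ref{everything}) into a nested rightward cap and cup with $i$-strands, slide the $j$-strand in between via~(\ref{rightpitchfork})--(\ref{rightpitchfork2}), and observe that the $j$-strand now crosses the internal $i$-strand twice. One then applies~(\ref{now}): if $d_{ij}=0$ the two crossings contribute only the scalar $t_{ij}$, giving~(\ref{bs7})--(\ref{bs8}); if $d_{ij}>0$ they produce the full three-term expansion with $t_{ij}$, $t_{ji}$, and the $s_{ij}^{pq}$ corrections. Refolding via~(\ref{everything}) and using the shorthand~(\ref{shorthand}) to relabel the surviving dots yields~(\ref{bs5})--(\ref{bs6}). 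The cap-cup versions follow by the Chevalley involution of Proposition~\ref{opiso}.

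\textbf{Parts (i) and (ii), $i=j$.} Here the same expansion works, but sliding the $i$-strand past the internal $i$-strand triggers the quiver Hecke relation~(\ref{qha}) rather than just a scalar. Each application of~(\ref{qha}) in the slide produces an error term which, after using~(\ref{upcross1})--(\ref{upcross2}) to redistribute the dots and Proposition~\ref{leftdot} to push them across the leftward caps, gives rise to the sum $\sum_r$ on the right-hand side. The coefficient $(r+1)$ in the even case (\ref{bs1})--(\ref{bs2}) comes from the two ways dots can be transferred (once at the top of the bubble and once at the bottom), each contributing $r$ possibilities plus the diagonal term, so a total of $r+1$. In the odd case~(\ref{bs3})--(\ref{bs4}), the additional sign factors of $(-1)^{|i|\lfloor n/2\rfloor}$ coming from~(\ref{first}) and~(\ref{solid4}) make half of the terms cancel, and the odd-bubble identity~(\ref{bubble4}) together with $\mathord{\begin{tikzpicture}[baseline=0]\draw[-,thick,darkred] (0,0.3) to[out=180,in=90] (-.2,0.1); \draw[-,thick,darkred] (0.2,0.1) to[out=90,in=0] (0,.3); \draw[-,thick,darkred] (-.2,0.1) to[out=-90,in=180] (0,-0.1); \draw[-,thick,darkred] (0,-0.1) to[out=0,in=-90] (0.2,0.1); \draw[-,thick,darkred] (0.14,-0.05) to (-0.14,0.23); \draw[-,thick,darkred] (0.14,0.23) to (-0.14,-0.05); \node at (-.28,.03) {$\scriptstyle{i}$};\end{tikzpicture}}^{\,2}=0$ from~(\ref{obprops}) kill the odd-indexed residues, leaving only the even contributions and producing the coefficient $(2r+1)$.

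\textbf{Main obstacle.} The principal technical challenge lies in the odd case of part~(ii). There one must simultaneously (a) commute dots past the internal crossing~(\ref{qha}), each passage producing signs $(-1)^{|i||j|n}$; (b) re-express leftward-dotted cups and caps in terms of rightward-dotted ones via Proposition~\ref{leftdot}, picking up signs $(-1)^{|i|\lfloor n/2\rfloor}$; (c) invoke the conversion~(\ref{bubble4}) between odd-dotted bubbles and even-dotted bubbles multiplied by the odd bubble; and (d) use~(\ref{obprops}) to annihilate the terms in which two odd bubbles would collide. Verifying that all of these signs conspire correctly to produce the clean integer coefficient $(2r+1)$ requires careful bookkeeping, but introduces no ideas beyond those already present in Sections~\ref{sr1}--\ref{sr3}. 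Once~(\ref{bs1}) and~(\ref{bs3}) are established, the companion identities~(\ref{bs2}) and~(\ref{bs4}) follow either by horizontally reflecting via Proposition~\ref{opiso} or by attaching rightward cups and caps to the established identities.
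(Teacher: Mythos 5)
There is a genuine gap, and it occurs in two places. First, your strategy of ``expanding the bubble via~(\ref{everything}) into a nested rightward cap and cup and sliding the strand through'' only makes sense when the bubble is a real one. The bubbles in Proposition~\ref{bs} carry $n+*$ dots in the shorthand of~(\ref{shorthand}), i.e.\ $n\mp\langle h_i,\lambda\rangle-1$ actual dots, and for small $n$ these are the \emph{negatively dotted} (fake) bubbles defined formally by~(\ref{ig1})--(\ref{ig2}); they are not compositions of cups, dots and caps, so there is nothing to unfold. Concretely, the direct ``unfold--slide--refold'' computation establishes~(\ref{bs5}) and~(\ref{bs7}) only for $n\geq\langle h_i,\lambda\rangle+1$ (plus $n=0$ via~(\ref{a4})), and the mirror computation covers $\langle h_i,\lambda\rangle\geq d_{ij}$; in the remaining window $1\leq\langle h_i,\lambda\rangle\leq d_{ij}-1$ the paper needs a genuinely separate induction on $n$, driven by~(\ref{lurking}) together with the infinite Grassmannian identity~(\ref{ig3}), to reach the fake-bubble range. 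Your proposal contains no mechanism for this range.

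Second, the companion identities (\ref{bs2}), (\ref{bs4}), (\ref{bs6}), (\ref{bs8}) do \emph{not} follow from Proposition~\ref{opiso} or from attaching rightward cups and caps: both of those operations turn an upward $j$-strand into a downward one, so they produce bubble slides past downward strands, not the reflected identities you need (which still involve an upward strand but with the opposite bubble orientation on the opposite side). The correct mechanism, and the one the paper relies on, is a generating-function argument: encoding the two families of bubbles as the mutually inverse power series $(1+\d t)\e(t^2)$ and $(1-\d t)\h(-t^2)$ (resp.\ $\e(t)$ and $\h(-t)$ in the even case) under $\beta_{\lambda;i}$, each identity of a pair is obtained from the other by multiplying by the inverse series. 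This equivalence is not cosmetic --- it is also what lets one choose, for each fixed $\langle h_i,\lambda\rangle$, whichever member of the pair involves real bubbles, and thereby close most of the fake-bubble gap above. Finally, your account of the coefficients is not the one that actually works: in the odd case the coefficient $(2r+1)$ emerges from the elementary cancellation $\sum_{s=0}^{t}(-1)^{(s+1)t}=t+1$ or $0$ according to the parity of $t$, after pushing dots through with~(\ref{upcross2}) and~(\ref{dog2}); the identities~(\ref{bubble4}) and~(\ref{obprops}) play no role here.
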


\begin{proof}
Let $h := \langle h_i,\lambda \rangle$ throughout the proof.

(i)--(ii)
When $i$ is even, this was already established in \cite{L}.
So we just need to prove (ii), assuming $i$ is odd.
We observe to start with that the identities (\ref{bs3})  and
(\ref{bs4}) (for fixed $\lambda$ and all $n \geq 0$) are
equivalent. To see this, let us rephrase them in terms of power
series.
We make $\End_{\UU(\g)}(E_i 1_\lambda)$
into a $\k[x]$-module
so that $x$ acts as
by vertically composing on top with a dot.
Let $t$ be an indeterminate and $\e(t) := \sum_{n \geq 0} \e_n t^n$,
$\h(t) := \sum_{n \geq 0} \h_n t^n$, which are power series in
$\Sym[[t]]$.
Recalling (\ref{goo1})--(\ref{goo2}),
the identities (\ref{bs3}) 
and (\ref{bs4}) for all 
$n \geq 0$ 
are equivalent to the generating function identities
\begin{align*}
\mathord{
\begin{tikzpicture}[baseline = 0]
	\draw[->,thick,darkred] (0.08,-.4) to (0.08,.4);
     \node at (0.08,-.5) {$\scriptstyle{i}$};
\end{tikzpicture}
}
\!{\scriptstyle\lambda}
\:\:
\beta_{\lambda;i}((1-\d t)\h(-t^2))
&= \left(\sum_{r \geq 0} (2r+1) x^{2r} t^{2r}\right)
\beta_{\lambda+\alpha_i;i}((1-\d t)\h(-t^2)) \mathord{
\begin{tikzpicture}[baseline = 0]
	\draw[->,thick,darkred] (0.08,-.4) to (0.08,.4);
     \node at (0.08,-.5) {$\scriptstyle{i}$};
\end{tikzpicture}
}
{\scriptstyle\lambda}
\,,\\
\beta_{\lambda+\alpha_i;i}((1+\d t)\e(t^2)) \mathord{
\begin{tikzpicture}[baseline = 0]
	\draw[->,thick,darkred] (0.08,-.4) to (0.08,.4);
     \node at (0.08,-.5) {$\scriptstyle{i}$};
\end{tikzpicture}
}
{\scriptstyle\lambda}
&= \left(\sum_{r \geq 0} (2r+1) x^{2r} t^{2r} \right)
\mathord{
\begin{tikzpicture}[baseline = 0]
	\draw[->,thick,darkred] (0.08,-.4) to (0.08,.4);
     \node at (0.08,-.5) {$\scriptstyle{i}$};
\end{tikzpicture}
}
{\scriptstyle\lambda}
\:\:
\beta_{\lambda;i}((1+\d t)\e(t^2)),
\end{align*}
respectively, as
follows by equating coefficients of $t$.
Since $\e(t) \h(-t) = 1$ in $\Sym$
and $\d^2 = 0$, we deduce that
$(1+\d t)\e(t^2)$ and $(1-\d t) \h(-t^2)$ are two-sided inverses.
Using this, it is easy to see that the two generating function
identities are indeed equivalent, e.g.
multiplying the first 
on the right by $\beta_{\lambda;i}((1+\d t)\e(t^2))$ and on the left by
$\beta_{\lambda+\alpha_i;i}((1- \d t)\e(t^2))$
transforms it into the second.

To complete the proof of (ii), 
we need to show that one of (\ref{bs3}) or (\ref{bs4}) holds for
each fixed $h$.
We proceed to verify (\ref{bs3}) in case $h \leq -1$;
a similar argument establishes (\ref{bs4}) in case $h \geq -1$.
So assume that $h \leq -1$.
The identity to be proved is trivial in case $n= 0$ so suppose moreover
that $n > 0$, so that $n-h-1 \geq 1$.
Then we have that
\begin{align*}
\mathord{
\right.
\end{align*}
This proves both (\ref{bs5}) and (\ref{bs7}) for $n \geq h+1$.
Also, the case $n=0$ follows from (\ref{a4}), 
hence, we are completely done if $h \leq 0$.
A similar argument establishes (\ref{bs6}) and (\ref{bs8}) for $n \geq
d_{ij}-h+1$,
hence, we are completely done if $h \geq d_{ij}$.

We are left with proving (\ref{bs5})--(\ref{bs6}) when $1 \leq h \leq
d_{ij}-1$.
We claim that (\ref{bs5}) holds for all $n \leq d_{ij}-h$.
The claim implies that (\ref{bs6}) holds for all $n \leq d_{ij}-h$ too, and we have
already established (\ref{bs6}) for $n \geq d_{ij}-h+1$, so the
claim is
enough to finish the proof.
For the claim,
we proceed by induction on $n = 0,1,\dots,d_{ij}-h$. The base case $n=0$
is trivial.
For the induction step, 
take $1 \leq n \leq
d_{ij}-h$.
By (\ref{lurking}), we have that
\begin{multline*}
\mathord{
\begin{tikzpicture}[baseline = 0]
	\draw[-,thick,darkred] (0.45,-.6) to [out=-90,in=0] (0.225,-.85);
	\draw[-,thick,darkred] (0.225,-0.85) to [out=180,in=-90] (0,-.6);
	\draw[-,thick,darkred] (-0.45,.6) to [out=-270,in=180] (-.225,.85);
	\draw[-,thick,darkred] (-0.225,.85) to [out=0,in=-270] (0,.6);
	\draw[<-,thick,darkred] (0.55,.85) to (-0.55,-.85);
	\draw[<-,thick,darkred] (0.45,-.6) to [in=-90,out=90] (-0.45,.6);
   \node at (-0.55,-.95) {$\scriptstyle{j}$};
  \node at (-0.55,.7) {$\scriptstyle{i}$};
   \node at (.5,0.4) {$\scriptstyle{\lambda}$};
     \node at (-0.02,0.7) {$\color{darkred}\bullet$};
      \node at (0.17,0.88) {$\color{darkred}\scriptstyle{n-1}$};
        \draw[-,thick,darkred] (0,-.6) to[out=90,in=-90] (.45,0);
        \draw[-,thick,darkred] (0.45,0) to[out=90,in=-90] (0,0.6);
\end{tikzpicture}
}-
\mathord{
\begin{tikzpicture}[baseline = 0]
	\draw[-,thick,darkred] (0.45,-.6) to [out=-90,in=0] (0.225,-.85);
	\draw[-,thick,darkred] (0.225,-0.85) to [out=180,in=-90] (0,-.6);
	\draw[-,thick,darkred] (-0.45,.6) to [out=-270,in=180] (-.225,.85);
	\draw[-,thick,darkred] (-0.225,.85) to [out=0,in=-270] (0,.6);
	\draw[<-,thick,darkred] (0.55,.85) to (-0.55,-.85);
	\draw[<-,thick,darkred] (0.45,-.6) to [in=-90,out=90] (-0.45,.6);
        \draw[-,thick,darkred] (0,-.6) to[out=90,in=-90] (-.45,0);
        \draw[-,thick,darkred] (-0.45,0) to[out=90,in=-90] (0,0.6);
   \node at (-0.55,-.95) {$\scriptstyle{j}$};
  \node at (-0.55,.7) {$\scriptstyle{i}$};
   \node at (.4,0) {$\scriptstyle{\lambda}$};
     \node at (-0.02,0.7) {$\color{darkred}\bullet$};
      \node at (0.17,0.88) {$\color{darkred}\scriptstyle{n-1}$};
\end{tikzpicture}
}
=
\displaystyle\sum_{\substack{r,s \geq 0\\r+s=d_{ij}-1}}
\!\!\!
(-1)^{|i|s}
t_{ij}
\!
\mathord{
\begin{tikzpicture}[baseline = 0]
	\draw[-,thick,darkred] (0.38,-.3) to [out=-90,in=0] (0.165,-.55);
	\draw[-,thick,darkred] (0.165,-0.55) to [out=180,in=-90] (-0.05,-.3);
	\draw[<-,thick,darkred] (-0.38,.4) to [out=-270,in=180] (-0.165,.65);
	\draw[-,thick,darkred] (-0.165,.65) to [out=0,in=-270] (0.05,.4);
	\draw[<-,thick,darkred] (0.38,-.4) to [out=90,in=0] (0.165,-.05);
	\draw[-,thick,darkred] (0.165,-0.05) to [out=180,in=90] (-0.05,-.4);
	\draw[-,thick,darkred] (-0.38,.5) to [out=270,in=180] (-0.165,.15);
	\draw[-,thick,darkred] (-0.165,.15) to [out=0,in=270] (0.05,.5);
	\draw[->,thick,darkred] (-0.38,-.55) to (.38,.65);
   \node at (0.25,-.6) {$\scriptstyle{i}$};
   \node at (-0.38,.66) {$\scriptstyle{i}$};
   \node at (-0.38,-.65) {$\scriptstyle{j}$};
   \node at (.4,.15) {$\scriptstyle{\lambda}$};
     \node at (0.02,0.3) {$\color{darkred}\bullet$};
     \node at (-0.04,-0.27) {$\color{darkred}\bullet$};
      \node at (-0.15,0.3) {$\color{darkred}\scriptstyle{r}$};
      \node at (0.1,-0.27) {$\color{darkred}\scriptstyle{s}$};
     \node at (0,0.55) {$\color{darkred}\bullet$};
      \node at (0.18,0.74) {$\color{darkred}\scriptstyle{n-1}$};
\end{tikzpicture}
}
+ \displaystyle
\!\!\sum_{\substack{0 < p < d_{ij}\\0 < q <
    d_{ji}\\r,s \geq 0\\r+s=p-1}}
\!\!\!\!(-1)^{|i|s}
s_{ij}^{pq}
\mathord{
\begin{tikzpicture}[baseline = 0]
	\draw[-,thick,darkred] (0.38,-.3) to [out=-90,in=0] (0.165,-.55);
	\draw[-,thick,darkred] (0.165,-0.55) to [out=180,in=-90] (-0.05,-.3);
	\draw[<-,thick,darkred] (-0.38,.4) to [out=-270,in=180] (-0.165,.65);
	\draw[-,thick,darkred] (-0.165,.65) to [out=0,in=-270] (0.05,.4);
	\draw[<-,thick,darkred] (0.38,-.3) to [out=90,in=0] (0.165,-.05);
	\draw[-,thick,darkred] (0.165,-0.05) to [out=180,in=90] (-0.05,-.3);
	\draw[-,thick,darkred] (-0.38,.4) to [out=270,in=180] (-0.165,.15);
	\draw[-,thick,darkred] (-0.165,.15) to [out=0,in=270] (0.05,.4);
	\draw[->,thick,darkred] (-0.38,-.55) to (.38,.65);
   \node at (0.45,-.5) {$\scriptstyle{i}$};
   \node at (-0.4,.67) {$\scriptstyle{i}$};
   \node at (-0.38,-.65) {$\scriptstyle{j}$};
   \node at (.4,.15) {$\scriptstyle{\lambda}$};
     \node at (0,0.05) {$\color{darkred}\bullet$};
     \node at (-0.04,-0.27) {$\color{darkred}\bullet$};
      \node at (0.1,-0.27) {$\color{darkred}\scriptstyle{s}$};
      \node at (0.15,0.07) {$\color{darkred}\scriptstyle{q}$};
     \node at (0.02,0.3) {$\color{darkred}\bullet$};
      \node at (-0.15,0.3) {$\color{darkred}\scriptstyle{r}$};
     \node at (0,0.55) {$\color{darkred}\bullet$};
      \node at (0.18,0.74) {$\color{darkred}\scriptstyle{n-1}$};
\end{tikzpicture}
}.
\end{multline*}
Both terms on the left hand side are zero: for the first this follows
immediately from (\ref{startb}); for the second one this follows from
(\ref{startd}) and (\ref{bubble2}) on applying
(\ref{rtcross1}) to pull the dots past the crossing.
Replacing $s$ by $s+h-1$, we have proved that
$$
\sum_{s=0}^n (-1)^{|i|s}
\left(
t_{ij}
\mathord{\begin{tikzpicture}[baseline = -10]
  \draw[-,thick,darkred] (0,0.2) to[out=180,in=90] (-.2,0);
  \draw[->,thick,darkred] (0.2,0) to[out=90,in=0] (0,.2);
 \draw[-,thick,darkred] (-.2,0) to[out=-90,in=180] (0,-0.2);
  \draw[-,thick,darkred] (0,-0.2) to[out=0,in=-90] (0.2,0);
 \node at (0,-.28) {$\scriptstyle{i}$};
   \node at (.75,0) {$\color{darkred}\scriptstyle{n-s+*}$};
      \node at (.2,0) {$\color{darkred}\bullet$};
\end{tikzpicture}
}\!\!\!
\mathord{
\begin{tikzpicture}[baseline = 0]
	\draw[->,thick,darkred] (0.08,-.8) to (0.08,.8);
     \node at (0.08,-.9) {$\scriptstyle{j}$};
\end{tikzpicture}
}\!\!\!
\mathord{\begin{tikzpicture}[baseline = 10]
  \draw[<-,thick,darkred] (0,0.2) to[out=180,in=90] (-.2,0);
  \draw[-,thick,darkred] (0.2,0) to[out=90,in=0] (0,.2);
 \draw[-,thick,darkred] (-.2,0) to[out=-90,in=180] (0,-0.2);
  \draw[-,thick,darkred] (0,-0.2) to[out=0,in=-90] (0.2,0);
 \node at (0,-.28) {$\scriptstyle{i}$};
   \node at (-.55,0) {$\color{darkred}\scriptstyle{s+*}$};
      \node at (-.2,0) {$\color{darkred}\bullet$};
\end{tikzpicture}
}\!\!\!
{\scriptstyle\lambda}
+ 
\displaystyle
\!\!\sum_{\substack{0 < p < d_{ij}\\0 < q <
    d_{ji}}}
s_{ij}^{pq}
\mathord{\begin{tikzpicture}[baseline = -10]
  \draw[-,thick,darkred] (0,0.2) to[out=180,in=90] (-.2,0);
  \draw[->,thick,darkred] (0.2,0) to[out=90,in=0] (0,.2);
 \draw[-,thick,darkred] (-.2,0) to[out=-90,in=180] (0,-0.2);
  \draw[-,thick,darkred] (0,-0.2) to[out=0,in=-90] (0.2,0);
 \node at (0,-.28) {$\scriptstyle{i}$};
   \node at (1.2,0) {$\color{darkred}\scriptstyle{n-s+p-d_{ij}+*}$};
      \node at (.2,0) {$\color{darkred}\bullet$};
\end{tikzpicture}
}\!\!\!
\mathord{
\begin{tikzpicture}[baseline = 0]
	\draw[->,thick,darkred] (0.08,-.8) to (0.08,.8);
   \node at (0.25,0) {$\color{darkred}\scriptstyle{q}$};
      \node at (0.08,0) {$\color{darkred}\bullet$};
     \node at (0.08,-.9) {$\scriptstyle{j}$};
\end{tikzpicture}
}\!\!\!
\mathord{\begin{tikzpicture}[baseline = 10]
  \draw[<-,thick,darkred] (0,0.2) to[out=180,in=90] (-.2,0);
  \draw[-,thick,darkred] (0.2,0) to[out=90,in=0] (0,.2);
 \draw[-,thick,darkred] (-.2,0) to[out=-90,in=180] (0,-0.2);
  \draw[-,thick,darkred] (0,-0.2) to[out=0,in=-90] (0.2,0);
 \node at (0,-.28) {$\scriptstyle{i}$};
   \node at (-.55,0) {$\color{darkred}\scriptstyle{s+*}$};
      \node at (-.2,0) {$\color{darkred}\bullet$};
\end{tikzpicture}
}\!\!\!
{\scriptstyle\lambda}
\:\right)
= 0.
$$
Also because $n < d_{ij}$ we have that
$$
\sum_{s=0}^n (-1)^{|i|s} \left(t_{ji}
\mathord{\begin{tikzpicture}[baseline = -10]
  \draw[-,thick,darkred] (0,0.2) to[out=180,in=90] (-.2,0);
  \draw[->,thick,darkred] (0.2,0) to[out=90,in=0] (0,.2);
 \draw[-,thick,darkred] (-.2,0) to[out=-90,in=180] (0,-0.2);
  \draw[-,thick,darkred] (0,-0.2) to[out=0,in=-90] (0.2,0);
 \node at (0,-.28) {$\scriptstyle{i}$};
   \node at (1.1,0) {$\color{darkred}\scriptstyle{n-s-d_{ij}+*}$};
      \node at (.2,0) {$\color{darkred}\bullet$};
\end{tikzpicture}
}\!\!\!
\mathord{
\begin{tikzpicture}[baseline = 0]
	\draw[->,thick,darkred] (0.08,-.8) to (0.08,.8);
     \node at (0.08,-.9) {$\scriptstyle{j}$};
\end{tikzpicture}
}\!\!\!
\mathord{\begin{tikzpicture}[baseline = 10]
  \draw[<-,thick,darkred] (0,0.2) to[out=180,in=90] (-.2,0);
  \draw[-,thick,darkred] (0.2,0) to[out=90,in=0] (0,.2);
 \draw[-,thick,darkred] (-.2,0) to[out=-90,in=180] (0,-0.2);
  \draw[-,thick,darkred] (0,-0.2) to[out=0,in=-90] (0.2,0);
 \node at (0,-.28) {$\scriptstyle{i}$};
   \node at (-.55,0) {$\color{darkred}\scriptstyle{s+*}$};
      \node at (-.2,0) {$\color{darkred}\bullet$};
\end{tikzpicture}
}\!\!\!
{\scriptstyle\lambda}\right)
=0.
$$
Now consider the identity obtained by adding these two expressions together.
We use the induction hypothesis
(\ref{bs5}) to simplify
all of the terms with $s \geq 1$, keeping the $s=0$ terms on the left
hand side,
to obtain
\begin{multline*}
t_{ij}
\,\mathord{\begin{tikzpicture}[baseline = 0]
  \draw[-,thick,darkred] (0,0.2) to[out=180,in=90] (-.2,0);
  \draw[->,thick,darkred] (0.2,0) to[out=90,in=0] (0,.2);
 \draw[-,thick,darkred] (-.2,0) to[out=-90,in=180] (0,-0.2);
  \draw[-,thick,darkred] (0,-0.2) to[out=0,in=-90] (0.2,0);
 \node at (0,-.28) {$\scriptstyle{i}$};
   \node at (.6,0) {$\color{darkred}\scriptstyle{n+*}$};
      \node at (.2,0) {$\color{darkred}\bullet$};
\end{tikzpicture}
}
\!\!\mathord{
\begin{tikzpicture}[baseline = 0]
	\draw[->,thick,darkred] (0.08,-.4) to (0.08,.4);
     \node at (0.08,-.5) {$\scriptstyle{j}$};
\end{tikzpicture}
}
\!
{\scriptstyle\lambda}
+
t_{ji}\,
\mathord{\begin{tikzpicture}[baseline = 0]
  \draw[-,thick,darkred] (0,0.2) to[out=180,in=90] (-.2,0);
  \draw[->,thick,darkred] (0.2,0) to[out=90,in=0] (0,.2);
 \draw[-,thick,darkred] (-.2,0) to[out=-90,in=180] (0,-0.2);
  \draw[-,thick,darkred] (0,-0.2) to[out=0,in=-90] (0.2,0);
 \node at (0,-.28) {$\scriptstyle{i}$};
   \node at (.85,0) {$\color{darkred}\scriptstyle{n-d_{ij}+*}$};
      \node at (.2,0) {$\color{darkred}\bullet$};
\end{tikzpicture}
}
\!\!\mathord{
\begin{tikzpicture}[baseline = 0]
	\draw[->,thick,darkred] (0.08,-.4) to (0.08,.4);
   \node at (.4,0) {$\color{darkred}\scriptstyle{d_{ji}}$};
      \node at (.08,0) {$\color{darkred}\bullet$};
     \node at (0.08,-.5) {$\scriptstyle{j}$};
\end{tikzpicture}
}
\!
{\scriptstyle\lambda}
+
\!\!\displaystyle
\sum_{\substack{0 < p < d_{ij}\\0 < q < d_{ji}}}
\!\!
s_{ij}^{pq}
\:\mathord{\begin{tikzpicture}[baseline = 0]
  \draw[-,thick,darkred] (0,0.2) to[out=180,in=90] (-.2,0);
  \draw[->,thick,darkred] (0.2,0) to[out=90,in=0] (0,.2);
 \draw[-,thick,darkred] (-.2,0) to[out=-90,in=180] (0,-0.2);
  \draw[-,thick,darkred] (0,-0.2) to[out=0,in=-90] (0.2,0);
 \node at (0,-.28) {$\scriptstyle{i}$};
   \node at (1.05,0) {$\color{darkred}\scriptstyle{n+p-d_{ij}+*}$};
      \node at (.2,0) {$\color{darkred}\bullet$};
\end{tikzpicture}
}
\mathord{
\begin{tikzpicture}[baseline = 0]
	\draw[->,thick,darkred] (0.08,-.4) to (0.08,.4);
   \node at (.3,0) {$\color{darkred}\scriptstyle{q}$};
      \node at (.08,0) {$\color{darkred}\bullet$};
     \node at (0.08,-.5) {$\scriptstyle{j}$};
\end{tikzpicture}
}
\!
{\scriptstyle\lambda}\\
=
-\sum_{s=1}^n (-1)^{|i|s}
\mathord{
\begin{tikzpicture}[baseline = 0]
	\draw[->,thick,darkred] (0.08,-.5) to (0.08,.8);
     \node at (0.08,-.6) {$\scriptstyle{j}$};
\end{tikzpicture}
}
\begin{array}{l}
\hspace{1mm}\mathord{
\begin{tikzpicture}[baseline = 0]
  \draw[-,thick,darkred] (0,0.4) to[out=180,in=90] (-.2,0.2);
  \draw[->,thick,darkred] (0.2,0.2) to[out=90,in=0] (0,.4);
 \draw[-,thick,darkred] (-.2,0.2) to[out=-90,in=180] (0,0);
  \draw[-,thick,darkred] (0,0) to[out=0,in=-90] (0.2,0.2);
 \node at (-.28,.13) {$\scriptstyle{i}$};
  \node at (0.2,0.2) {$\color{darkred}\bullet$};
   \node at (0.75,0.2) {$\color{darkred}\scriptstyle{n-s+*}$};
\end{tikzpicture}
}
\\
\hspace{-4mm}
\mathord{
\begin{tikzpicture}[baseline = 0]
  \draw[-,thick,darkred] (0.2,0.2) to[out=90,in=0] (0,.4);
  \draw[<-,thick,darkred] (0,0.4) to[out=180,in=90] (-.2,0.2);
\draw[-,thick,darkred] (-.2,0.2) to[out=-90,in=180] (0,0);
  \draw[-,thick,darkred] (0,0) to[out=0,in=-90] (0.2,0.2);
   \node at (-0.2,0.2) {$\color{darkred}\bullet$};
   \node at (-0.6,0.2) {$\color{darkred}\scriptstyle{s+*}$};
 \node at (.28,.13) {$\scriptstyle{i}$};
   \node at (0.4,0.4) {$\scriptstyle{\lambda}$};
\end{tikzpicture}
}
\end{array}
\stackrel{(\ref{ig3})}{=}
\mathord{
\begin{tikzpicture}[baseline = 0]
	\draw[->,thick,darkred] (0.08,-.4) to (0.08,.4);
     \node at (0.08,-.5) {$\scriptstyle{j}$};
\end{tikzpicture}
}
\!{\scriptstyle\lambda}
\:\mathord{\begin{tikzpicture}[baseline = 0]
  \draw[-,thick,darkred] (0,0.2) to[out=180,in=90] (-.2,0);
  \draw[->,thick,darkred] (0.2,0) to[out=90,in=0] (0,.2);
 \draw[-,thick,darkred] (-.2,0) to[out=-90,in=180] (0,-0.2);
  \draw[-,thick,darkred] (0,-0.2) to[out=0,in=-90] (0.2,0);
 \node at (0,-.28) {$\scriptstyle{i}$};
   \node at (0.6,0) {$\color{darkred}\scriptstyle{n+*}$};
      \node at (0.2,0) {$\color{darkred}\bullet$};
\end{tikzpicture}
}.
\end{multline*}
This completes the proof of the claim.
\end{proof}

\begin{corollary}
For $i,j \in I$ with $i$ odd, we have that
\begin{equation}\label{centrality2}
\mathord{
\begin{tikzpicture}[baseline = 0]
  \draw[-,thick,darkred] (0,0.2) to[out=180,in=90] (-.2,0);
  \draw[-,thick,darkred] (0.2,0) to[out=90,in=0] (0,.2);
 \draw[-,thick,darkred] (-.2,0) to[out=-90,in=180] (0,-0.2);
  \draw[-,thick,darkred] (0,-0.2) to[out=0,in=-90] (0.2,0);
  \draw[-,thick,darkred] (0.14,-0.15) to (-0.14,0.13);
  \draw[-,thick,darkred] (0.14,0.13) to (-0.14,-0.15);
 \node at (-.28,-.07) {$\scriptstyle{i}$};
   \node at (.4,-.5) {$\scriptstyle{j}$};
   \node at (.6,0) {$\scriptstyle{\lambda}$};
	\draw[->,thick,darkred] (.4,-.4) to (.4,.4);
\end{tikzpicture}
}=
\mathord{
\begin{tikzpicture}[baseline = 0]
  \draw[-,thick,darkred] (0,0.2) to[out=180,in=90] (-.2,0);
  \draw[-,thick,darkred] (0.2,0) to[out=90,in=0] (0,.2);
 \draw[-,thick,darkred] (-.2,0) to[out=-90,in=180] (0,-0.2);
  \draw[-,thick,darkred] (0,-0.2) to[out=0,in=-90] (0.2,0);
  \draw[-,thick,darkred] (0.14,-0.15) to (-0.14,0.13);
  \draw[-,thick,darkred] (0.14,0.13) to (-0.14,-0.15);
 \node at (-.28,-.07) {$\scriptstyle{i}$};
   \node at (-.4,-.5) {$\scriptstyle{j}$};
   \node at (.4,0) {$\scriptstyle{\lambda}$};
	\draw[->,thick,darkred] (-.4,-.4) to (-.4,.4);
\end{tikzpicture}
},
\qquad
\mathord{
\begin{tikzpicture}[baseline = 0]
  \draw[-,thick,darkred] (0,0.2) to[out=180,in=90] (-.2,0);
  \draw[-,thick,darkred] (0.2,0) to[out=90,in=0] (0,.2);
 \draw[-,thick,darkred] (-.2,0) to[out=-90,in=180] (0,-0.2);
  \draw[-,thick,darkred] (0,-0.2) to[out=0,in=-90] (0.2,0);
  \draw[-,thick,darkred] (0.14,-0.15) to (-0.14,0.13);
  \draw[-,thick,darkred] (0.14,0.13) to (-0.14,-0.15);
 \node at (-.28,-.07) {$\scriptstyle{i}$};
   \node at (.4,.5) {$\scriptstyle{j}$};
   \node at (.6,0) {$\scriptstyle{\lambda}$};
	\draw[<-,thick,darkred] (.4,-.4) to (.4,.4);
\end{tikzpicture}
}=
\mathord{
\begin{tikzpicture}[baseline = 0]
  \draw[-,thick,darkred] (0,0.2) to[out=180,in=90] (-.2,0);
  \draw[-,thick,darkred] (0.2,0) to[out=90,in=0] (0,.2);
 \draw[-,thick,darkred] (-.2,0) to[out=-90,in=180] (0,-0.2);
  \draw[-,thick,darkred] (0,-0.2) to[out=0,in=-90] (0.2,0);
  \draw[-,thick,darkred] (0.14,-0.15) to (-0.14,0.13);
  \draw[-,thick,darkred] (0.14,0.13) to (-0.14,-0.15);
 \node at (-.28,-.07) {$\scriptstyle{i}$};
   \node at (-.4,.5) {$\scriptstyle{j}$};
   \node at (.4,0) {$\scriptstyle{\lambda}$};
	\draw[<-,thick,darkred] (-.4,-.4) to (-.4,.4);
\end{tikzpicture}
}.
\end{equation}
\end{corollary}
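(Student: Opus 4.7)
The strategy is to use the sliding formulas of Proposition~\ref{bs} to pull the odd bubble across a $j$-strand, and then verify that every correction term vanishes using the factorization relation \eqref{bubble4} together with the vanishing of counterclockwise/clockwise bubbles with negative shorthand index from \eqref{bubble1}--\eqref{bubble2}.

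First, I would reduce to proving just one of the four relations in \eqref{centrality2}, say the first (odd bubble commuting past an upward $j$-strand on its right). The remaining three relations then follow by applying the Chevalley involution $\T$ of Proposition~\ref{opiso} (which swaps $E_j \leftrightarrow F_j$ and sends the odd bubble at $\lambda$ to $(-1)^{|i,\lambda|}$ times the odd bubble at $-\lambda$) together with the biadjunctions \eqref{rightadj}, \eqref{adjfinal} (which convert an upward $j$-strand on one side of the bubble into a downward $j$-strand on the other side by attaching leftward/rightward cups and caps).

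For the first relation, express the odd bubble at weight $\lambda$ via \eqref{oddbubble}: when $h := \langle h_i,\lambda\rangle \geq 0$ it equals $c_{\lambda;i}^{-1}$ times the counterclockwise $i$-bubble of shorthand ``$1{+}*$'' dots; the case $h \leq 0$ is strictly analogous using the clockwise form. Taking $n = 1$ in the appropriate part of Proposition~\ref{bs}, there are three cases. When $j = i$, relation \eqref{bs4} gives terms indexed by $r \geq 0$ involving counterclockwise bubbles with shorthand $1{-}2r{+}*$ dots; for $r \geq 1$ these factor via \eqref{bubble4} as (counterclockwise bubble with $-2r{+}*$ dots)$\cdot$(odd bubble), and the first factor vanishes by \eqref{bubble1} since $-2r < 0$, so only the $r = 0$ term survives and yields the centrality relation after using $t_{ii} = 1$ in \eqref{a4} to align the normalizations. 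When $j \neq i$ and $d_{ij} = 0$, relation \eqref{bs7} is itself a direct commutation and the claim is immediate.

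When $j \neq i$ and $d_{ij} > 0$, relation \eqref{bs5} produces correction terms involving counterclockwise bubbles with shorthand $1{-}d_{ij}{+}*$ and $1{+}p{-}d_{ij}{+}*$ for $0 < p < d_{ij}$. Because $|i| = \1$, assumption \eqref{a1} forces $d_{ij}$ to be even, and \eqref{a3} forces $p$ to be even whenever $s_{ij}^{pq} \neq 0$; hence every correction bubble has odd shorthand index, and \eqref{bubble4} factors each one as (even-shorthand bubble)$\cdot$(odd bubble). The resulting even-shorthand indices $-d_{ij}{+}*$ and $p{-}d_{ij}{+}*$ are both strictly negative (since $p < d_{ij}$) so \eqref{bubble1} forces these factors to vanish, leaving only the main $t_{ij}$-term which produces the desired right-hand side via \eqref{a4}.

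The principal obstacle is the parallel treatment of the $h \leq 0$ case, where the odd bubble is realized as a clockwise bubble and one must apply \eqref{bs6} with the clockwise analogue of \eqref{bubble4}, using \eqref{bubble2} in place of \eqref{bubble1} in the vanishing argument; and more seriously, the sign-tracking required when transporting the proved relation via $\T$ and the biadjunctions to the other three configurations, since the odd bubble is an odd 2-morphism and picks up signs of the form $(-1)^{|i,\lambda|}$ under $\T$ as well as further signs from the super interchange law each time an odd 2-morphism crosses another odd 2-morphism during the cup/cap manipulation.
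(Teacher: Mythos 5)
Your proposal is correct and follows essentially the same route as the paper: the upward-strand relation is the $n=1$ case of the bubble slides (\ref{bs3}), (\ref{bs5}) and (\ref{bs7}), with the lower terms killed because $d_{ij}$ is even (so $d_{ij}\geq 2$) and $s_{ij}^{pq}=0$ for odd $p$ by (\ref{a3}), and the downward-strand relation then follows by applying $\T$. Two small tidy-ups: for $j=i$ the counterclockwise slide you want is (\ref{bs3}) rather than (\ref{bs4}), and the detour through (\ref{bubble4}) and the separate treatment of $\langle h_i,\lambda\rangle\leq 0$ are unnecessary, since (\ref{bubble4}) together with (\ref{bubble2}) identifies the odd bubble with $c_{\lambda;i}^{-1}$ times the counterclockwise bubble with $1+*$ dots for every $\lambda$, after which (\ref{bubble1}) kills all the negatively indexed correction bubbles directly.
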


\begin{proof}
Remembering the definition (\ref{oddbubble}),
the first relation follows from the $n=1$ cases of (\ref{bs3}), 
(\ref{bs5}) and (\ref{bs7});
to see that the lower terms in (\ref{bs5}) vanish,
recall that 
$d_{ij}$ is even. Hence, it satisfies $d_{ij} \geq 2$,
and $s_{ij}^{pq} = 0$ if $p = d_{ij}-1$.
The second relation follows from the first by applying $\T$.
\end{proof}

\begin{remark}
One can invert the formulae in Proposition~\ref{bs} to obtain also
various bubble slides in the other direction. For example, 
inverting (\ref{bs1})--(\ref{bs4}) produces the following, for $i$ even, $i$
even, $i$ odd and $i$ odd, respectively:
\begin{align}
\mathord{

}.
\end{align*}
These two identities are proved in similar ways. One first uses
(\ref{pos})--(\ref{neg})
to reduce the double crossings, then
(\ref{upcross1})--(\ref{upcross2}) to pull the dots to the boundary,
remembering also (\ref{startb})--(\ref{startd}),
(\ref{rightpitchfork}) 
and (\ref{now}). By now we can safely leave the
details to the reader!
\end{proof}

\section{The nondegeneracy conjecture}\label{s6}

The main result of this section is
a generalization of \cite[Proposition 3.11]{KL3}.
We need some further notation which is adapted
from \cite{KL3}. 
Let
$\SSeq$ be the set of all 
words 
in the alphabet $\{{\color{darkred}{\uparrow}}_i, {\color{darkred}{\downarrow}}_i\:|\:i \in I\}$; our words
correspond to the {\em signed sequences} of \cite{KL3}.
For $\a = \a_m\cdots\a_1 \in \SSeq$,
let
\begin{equation}
\wt(\a) := \sum_{i \in I} 
\Big(\#\{n=1,\dots,m\:|\:\a_n = {\color{darkred}{\uparrow}}_i\}-
\#\{n=1,\dots,m\:|\:\a_n = {\color{darkred}{\downarrow}}_i\}\Big)
\alpha_i
\in Q.
\end{equation}
To $\lambda \in P$ and 
$\a = \a_m\cdots\a_1 \in \SSeq$, 
we associate 
the 1-morphism
\begin{equation}\label{safetypin}
E_\a 1_\lambda := E_{\a_m} \cdots E_{\a_1} 1_\lambda:\lambda
\rightarrow \lambda + \wt(\a)
\end{equation}
in $\UU(\g)$,
with the convention that $E_{{\color{darkred}{\uparrow}}_i}=E_i$
and $E_{{\color{darkred}{\downarrow}}_i}=F_i$.
As $\lambda$ and $\a$ vary, these give all of the 
$1$-morphisms in $\UU(\g)$.

Suppose that we are given 
$\a = \a_m\cdots \a_1$ and $\b = \b_n\cdots \b_1 \in \SSeq$.
An {\em $\a\b$-matching}
is a planar diagram with
\begin{itemize}
\item
$m$ distinct vertices on a horizontal axis at the bottom
labeled from right to left by the letters $\a_1,\dots,\a_m$;
\item 
$n$ distinct vertices on a horizontal axis at the top 
labeled from right to left by the letters $\b_1,\dots,\b_n$;
\item $(m+n)/2$ smoothly
immersed directed
$I$-colored strands
drawn between the horizontal axes whose endpoints are 
the given $(m+n)$ vertices.
\end{itemize}
We require moreover that:
\begin{itemize}
\item
the strands have only finitely many intersections and critical
points ($=$ points of slope zero);
\item 
there are no intersections at critical points, no triple
intersections, and no tangencies;
\item
the colors and directions of the strands are
consistent
with the letters at their endpoints.
\end{itemize}
Note at least one $\a\b$-matching exists if and only if
$\wt(\a)=\wt(\b)$.
Here is an example with $\a = {\color{darkred}\uparrow}_j\, {\color{darkred}\downarrow}_j\, {\color{darkred}\uparrow}_i\,
{\color{darkred}\downarrow}_k$ and $\b = {\color{darkred}\uparrow}_i\, {\color{darkred}\downarrow}_k\, {\color{darkred}\uparrow}_i\,{\color{darkred}\downarrow}_i$:
$$
\mathord{\begin{tikzpicture}[baseline = 0]
\draw[-,thick,darkred] (0,1) to [out=-90,in=180] (.45,-0.6);
\draw[<-,thick,darkred] (.45,-.6) to [out=0,in=180] (1.4,0.6);
\draw[-,thick,darkred] (1.4,.6) to [out=0,in=180] (2,0.3);
\draw[-,thick,darkred] (2,.3) to [out=0,in=-90] (2.4,1);
\draw[->,thick,darkred] (0,-1) to [out=60,in=180] (.7,.7);
\draw[-,thick,darkred] (.7,.7) to [out=0,in=60] (0.8,-1);
\draw[->,thick,darkred] (0.8,1) to [out=-90,in=150] (1.8,-0.4);
\draw[-,thick,darkred] (1.8,-0.4) to [out=-30,in=90] (2.4,-1);
\draw[->,thick,darkred] (1.6,-1) to [out=90,in=-180] (2,0.2);
\draw[-,thick,darkred] (2,0.2) to [out=0,in=0] (2,-.2);
\draw[-,thick,darkred] (2,-.2) to [out=180,in=-60] (1.6,1);
    \node at (1.48,-.75) {$\scriptstyle{i}$};
    \node at (2.1,-.7) {$\scriptstyle{k}$};
    \node at (2.44,.6) {$\scriptstyle{i}$};
    \node at (.1,0) {$\scriptstyle{j}$};
\end{tikzpicture}}
$$
A matching is {\em reduced} if each strand has 
at most one 
critical point which should either be a minimum or a maximum, 
there are no self-intersections of strands, and
distinct strands intersect at most once.

Any $\a\b$-matching defines a pairing between the letters of the words
$\a$ and $\b$, two letters being paired if they are endpoints of
the same strand.
We say that two matchings
are {\em equivalent}
if they define the same pairing.
Every matching is equivalent to at least one reduced matching.
For example, here
is a
reduced matching equivalent to the matching displayed above:
$$
\mathord{\begin{tikzpicture}[baseline = 0]
\draw[->,thick,darkred] (0,-1) to [out=90,in=180] (.4,-.4);
\draw[-,thick,darkred] (.4,-.4) to [out=0,in=90] (0.8,-1);
\draw[-,thick,darkred] (0,1) to [out=-90,in=180] (1.4,0.3);
\draw[<-,thick,darkred] (1.4,0.3) to [out=0,in=-90] (2.4,1);
\draw[->,thick,darkred] (0.8,1) to [out=-90,in=150] (1.8,-0.4);
\draw[-,thick,darkred] (1.8,-0.4) to [out=-30,in=90] (2.4,-1);
\draw[->,thick,darkred] (1.6,-1) to [out=90,in=-90] (2.2,0.1);
\draw[-,thick,darkred] (2.2,0.1) to [out=90,in=-90] (1.6,1);
    \node at (1.55,-.75) {$\scriptstyle{i}$};
    \node at (2.1,-.7) {$\scriptstyle{k}$};
    \node at (2.38,.6) {$\scriptstyle{i}$};
    \node at (.1,-0.4) {$\scriptstyle{j}$};
\end{tikzpicture}}
$$

A {\em decorated $\a\b$-matching}
is an $\a\b$-matching
whose strands have been decorated by finitely many 
dots
located away from intersections and critical points, each of which is
labeled by a non-negative integer.
Given any decorated $\a\b$-matching $\sigma$ and $\lambda \in P$,
there is a unique way to label the regions of $\sigma$ by elements of $P$
so that it becomes
the diagrammatic representation of a 
2-morphism $f(\sigma,\lambda)
\in\Hom_{\UU(\g)}(E_\a 1_\lambda, E_\b 1_\lambda)$
as above.

For each $\a,\b \in \SSeq$, we
choose a set $M(\a,\b)$ of representatives 
for the equivalence classes
of reduced $\a\b$-matchings.
For each element of
$M(\a,\b)$, we also choose
a distinguished point on each of its strands located away from
intersections and critical points.
Then let $\widehat{M}(\a,\b)$ be the set of decorated $\a\b$-matchings
obtained by taking each of the matchings in $M(\a,\b)$ and 
putting a dot labeled with a non-negative integer at each of its distinguished points.
Finally recall the homorphism
$\beta_\lambda:\SYM
\rightarrow 
\End_{\UU(\g)}(1_\lambda)$ from (\ref{beta}).

\begin{theorem}\label{tripleofdata}
Take $\a,\b \in \SSeq$ with $\wt(\a) = \wt(\b)$ and any $\lambda \in P$.
Viewing $\Hom_{\UU(\g)}(E_\a 1_\lambda,
E_\b 1_\lambda)$ as a right $\SYM$-module
so that $p \in \SYM$ acts by horizontally composing on the right with
$\beta_\lambda(p)$, 
the 2-morphisms
$\big\{f(\sigma,\lambda)\:\big|\:\sigma \in \widehat{M}(\a,\b)\big\}$
generate $\Hom_{\UU(\g)}(E_\a 1_\lambda, E_\b
1_\lambda)$
as a right $\SYM$-module.
\label{spanning}
\end{theorem}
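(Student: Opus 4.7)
The plan is to follow the strategy of Khovanov and Lauda in \cite[Proposition 3.11]{KL3}, suitably adapted to the super setting. By the discussion following Definitions~\ref{def3}--\ref{def4}, the generators $x, \tau, \eta, \eps, \eta', \eps'$ suffice to produce all 2-morphisms in $\UU(\g)$, so any element of $\Hom_{\UU(\g)}(E_\a 1_\lambda, E_\b 1_\lambda)$ may be expressed as a linear combination of string diagrams built from these. I would then argue by a complexity reduction that any such diagram is congruent, modulo diagrams of strictly lower complexity and modulo the right $\SYM$-action, to one of the distinguished diagrams $f(\sigma,\lambda)$ with $\sigma \in \widehat{M}(\a,\b)$.

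The reduction proceeds in several stages. First, using the adjunction relations (\ref{rightadj}) and (\ref{adjfinal}), together with the definitions of sideways crossings (\ref{sigrel}) and (\ref{rory}), one arranges that each strand has at most one critical point (a cup or cap). Second, one uses the inversion relations (\ref{inv1})--(\ref{inv3}), together with the decomposition formulae (\ref{pos})--(\ref{neg}), to eliminate double crossings between pairs of strands; any ``bubble'' error terms produced are moved out of the way. Third, one invokes the triple-strand relations (Proposition~\ref{altbraidprop}) and the quiver Hecke superalgebra relations (\ref{now})--(\ref{qhalast}) to remove triple intersections, bringing the underlying matching into the reduced form of some $\sigma \in M(\a,\b)$. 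Fourth, the dot-sliding identities from Lemma~\ref{firstlemma}, Proposition~\ref{leftdot}, and the pitchfork relations (Proposition~\ref{pitch}) push all dots along their strands to the distinguished positions. Finally, the bubble-sliding formulae (Proposition~\ref{bs}) together with centrality of odd bubbles (\ref{centrality2}) allow all interior bubbles---including those produced as error terms in the preceding steps---to be pushed to the right edge of the diagram, where they assemble into an endomorphism of $1_\lambda$; the infinite Grassmannian relations (Proposition~\ref{IG}) then identify this endomorphism as lying in the image of $\beta_\lambda$.

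The main obstacle will be showing that this reduction procedure terminates, which requires setting up a correct complexity measure on string diagrams that is strictly decreased by each of the reduction moves modulo the $\SYM$-action. In the even case, \cite[$\S$3.2.4]{KL3} uses a lexicographic combination of the number of crossings, critical points, and pairwise intersection counts; the same basic measure should work here, but the super signs complicate the bookkeeping, and odd bubbles require special treatment because Proposition~\ref{bs} replaces a single dot-shift $n$ by shifts of $2r$ (rather than $r$) when $i$ is odd, while the nilpotency (\ref{obprops}) of odd bubbles constrains which monomials in $\SYM$ actually appear. One must verify that the Grassmannian relations convert bubbles of arbitrarily high degree into bounded-degree bubbles multiplied by elements of $\SYM$, so interior bubbles cannot accumulate indefinitely. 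Once the complexity measure is properly chosen to account for parity and for the distinction between positive and negative dotted bubbles, the termination argument and hence the spanning statement follow by a direct, if lengthy, induction using only the relations derived in Sections~\ref{sr1}--\ref{sr5}.
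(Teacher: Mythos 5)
Your proposal is correct and follows essentially the same route as the paper, which likewise reduces to the rewriting argument of \cite[Proposition 3.11]{KL3} by induction on the number of crossings, using the relations derived in sections~\ref{more}--\ref{sr5}; in fact the paper's own proof is only a brief sketch deferring to \cite{KL3}, so your elaboration of the reduction stages and of the termination issue supplies more detail than the published argument.
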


\begin{proof}
By the definitions, any 2-morphism in
$\Hom_{\UU(\g)}(E_\a 1_\lambda, E_\b 1_\lambda)$
is a linear combination of diagrams obtained by horizontally and
vertically composing the generators $x,\tau,\eta,\eps,\eta'$ and $\eps'$.
Now the point is that 
we have derived enough relations above to be able to algorithmically rewrite any 2-morphism 
represented by such a 
diagram as a linear combination of the 2-morphisms
$f(\sigma,\lambda) \beta_\lambda(p)$
for $\sigma \in \widehat{M}(\a,\b)$ and $p \in \SYM$.
This proceeds by induction on the total number of crossings in the diagram.
We omit the details since it is essentially the same argument as used to prove
\cite[Proposition 3.11]{KL3}.
\end{proof}

Now we can properly state the Nondegeneracy Conjecture from the introduction:

\vspace{2mm}
\noindent
{\bf Nondegeneracy Conjecture.}
{\em 
For all $\a,\b \in \SSeq$
with $\wt(\a)=\wt(\b)$ and
any $\lambda \in P$, the superspace
$\Hom_{\UU(\g)}(E_\a 1_\lambda, E_\b 1_\lambda)$ is a free
right $\SYM$-module
with basis
$\big\{f(\sigma,\lambda)\:\big|\:\sigma \in \widehat{M}(\a,\b)\big\}$.}

\section{The covering quantum group}\label{sqg}

Henceforth, we assume that the Cartan matrix is symmetrized by
positive integers
$(d_i)_{i \in I}$, and that the
parameters are chosen to satisfy the
homogeneity condition (\ref{hc}).
Let $(-,-)$ be the
symmetric bilinear form on the root lattice 
$Q$ defined from $(\alpha_i,\alpha_j) :=
-d_i d_{ij}$.
In this section, we recall the definition of the covering quantum
group
$\dot U_{q,\pi}(\g)$ of Clark, Hill and Wang \cite{CHW1, CHW2}. Our exposition is based
mostly on \cite{CFLW} and \cite{Clark}.
Note that our $q$ is the parameter denoted $q^{-1}$ in \cite{CHW1,
  CHW2, 
CFLW}, which is $v^{-1}$ in \cite{Clark}. We write $e_i 1_\lambda$
and $f_i 1_\lambda$ in place of $E_i 1_\lambda$ and $F_i 1_\lambda$; we
would also write $k_i$ for the generator $K_i^{-1}$
although we won't actually need this here.
In \cite{CHW2, CFLW, Clark}, an additional assumption 
of ``bar-consistency'' is made on the super Cartan
datum; we do not insist on this until later.

Let $\mathbb{L}$ be the ring $\Q(q)[\pi] / (\pi^2-1)$,
and $\mathcal{L} := \Z[q,q^{-1},\pi] / (\pi^2-1)$ as in the introduction.
For $n \in \Z$, we let
$$
[n]_{q,\pi} := \frac{q^n - (\pi q)^{-n}}{q - (\pi q)^{-1}}
= \left\{
\begin{array}{ll}
q^{n-1} + \pi q^{n-3}+\cdots + \pi^{n-1} q^{1-n}&\text{if $n \geq 0$,}\\
-\pi^n (q^{-n-1}+\pi q^{-n-3}+\cdots+\pi^{-n-1} q^{1+n})&\text{if $n \leq 0$.}
\end{array}\right.
$$
There are corresponding quantum factorials and binomial coefficients:
$$
[n]_{q,\pi}^! := 
[n]_{q,\pi} [n-1]_{q,\pi}\cdots [1]_{q,\pi},
\qquad
\sqbinom{n}{r}_{q,\pi} := 
\frac{[n]_{q,\pi}^!}{[r]_{q,\pi}^! [n-r]_{q,\pi}^!}.
$$
We let $-$ be the involution of $\mathbb{L}$ (or $\mathcal{L}$) with
$\overline{q} = q^{-1}$ and $\overline{\pi} = \pi$. Note this is
different from the bar involution used in \cite{CFLW, Clark}; in particular,
our quantum integers are {\em not} bar invariant, but satisfy
\begin{equation}
\overline{[n]}_{q,\pi}
= \pi^{n-1} [n]_{q,\pi} = -\pi [-n]_{q,\pi}.
\end{equation}
We have that
$\overline{\sqbinom{n}{r}}_{q,\pi} =
\pi^{r(n-r)}\sqbinom{n}{r}_{q,\pi}$,
so that the quantum binomial coefficient is bar invariant if $n$
is odd.
 For $i \in I$, we set
$$
q_i := q^{d_i},
\qquad
\pi_i := \pi^{|i|}.
$$

Let $\dot U_{q,\pi}(\g)$ be the locally unital $\mathbb{L}$-algebra with
mutually orthogonal
idempotents $\{1_\lambda\:|\:\lambda \in P\}$,
and generators $e_i 1_\lambda = 1_{\lambda+\alpha_i} e_i$ and
$f_i 1_\lambda = 1_{\lambda-\alpha_i} f_i$ for all $i \in I$ and
$\lambda \in P$,
subject to the following relations:
\begin{align}\label{cwr1}
(e_i f_j - \pi^{|i||j|} f_j e_i) 1_\lambda &= \delta_{i,j} [\langle
h_i,\lambda\rangle]_{q_i,\pi_i} 1_\lambda,\!\!\!\!\!\!\!\!\!\!\!\\
\sum_{r=0}^{d_{ij}+1} (-1)^r \pi_i^{r |j| + r(r-1)/2}\sqbinom{d_{ij}+1}{r}_{q_i,\pi_i} e_i^{d_{ij}+1-r} e_j
e_i^r 1_\lambda &= 0 &(i \neq j),\label{cwr2}\\
\sum_{r=0}^{d_{ij}+1} (-1)^r\pi_i^{r|j|+r(r-1)/2}\sqbinom{d_{ij}+1}{r}_{q_i,\pi_i} f_i^{d_{ij}+1-r} f_j
f_i^r 1_\lambda &= 0&(i \neq j).\label{cwr3}
\end{align}
Also let $\dot U_{q,\pi}(\g)_{\LC}$ be the $\LC$-subalgebra of $\dot
U_{q,\pi}(\g)$ generated by the divided powers
\begin{equation}
e_i^{(n)} 1_\lambda := e_i^n 1_\lambda / [n]_{q_i,\pi_i}^!,
\qquad
f_i^{(n)} 1_\lambda := f_i^n 1_\lambda / [n]_{q_i,\pi_i}^!
\end{equation}
for all $i \in I, \lambda \in P$ and $n \geq 1$; see also \cite[Lemma 3.5]{Clark}.

We also need the antilinear
(with respect to the bar involution of the ground
ring)
algebra automorphisms
$\psi,\omega:\dot U_{q,\pi}(\g) 
\rightarrow\dot U_{q,\pi}(\g)$
and the linear algebra antiautomorphism
$\rho:\dot U_{q,\pi}(\g)\rightarrow 
\dot U_{q,\pi}(\g)$, which are defined on generators by
\begin{align}
\omega(1_\lambda) &= 1_{-\lambda},
&
\omega(e_i 1_\lambda) &= f_i 1_{-\lambda},
&
\omega(f_i 1_\lambda) &= e_i 1_{-\lambda},\label{omega}\\\label{psi}
\psi(1_\lambda)&=1_\lambda,
&\psi(e_i 1_\lambda)&=e_i 1_\lambda,
&\psi(f_i 1_\lambda)&=\pi^{|i,\lambda|} f_i 1_\lambda,\\
\label{rho}
 \rho(1_\lambda)&=1_\lambda,
&\rho(e_i 1_\lambda)&= q_i^{-\langle h_i,\lambda\rangle-1} 1_\lambda f_i,
&\rho(f_i 1_\lambda)&=q_i^{\langle h_i,\lambda\rangle-1} 1_\lambda e_i.
\end{align}
Note all of these are involutions.
Let $* := \rho \circ \psi$ and $! := \psi \circ \rho$. These are
mutually inverse antilinear
antiautomorphisms
with
\begin{align}\label{marry1}
 1_\lambda^*&=1_\lambda,
&(e_i 1_\lambda)^*&= q_i^{-\langle h_i,\lambda\rangle-1} 1_\lambda f_i,
&(f_i 1_\lambda)^*&=q_i^{\langle h_i,\lambda\rangle-1}
\pi^{|i,\lambda|}1_\lambda e_i,\\
1_\lambda^! &= 1_\lambda,
&(e_i 1_\lambda)^!&= \pi^{|i,\lambda|}q_i^{1+\langle h_i,\lambda\rangle} 1_\lambda f_i,
&(f_i 1_\lambda)^!&=q_i^{1-\langle h_i,\lambda\rangle}1_\lambda e_i.\label{marry2}
\end{align}
The notation here varies somewhat across the literature, e.g. the
counterparts of our $\omega, \psi$ and $\rho$ in the purely even setting
are denoted by $\omega\circ\psi, \psi$ and $\bar\rho$ in
\cite{KL3}.
In the remainder of the section, we are going to explain how to lift
$\omega, \psi$ and $\rho$ to
the Kac-Moody 2-supercategory.

First, we must explain how to deal with antilinearity at the level of 2-categories.
Let $\A$ be a graded supercategory. The supercategory $\A^{\operatorname{sop}}$
from Definition~\ref{curry} is actually a graded supercategory with
the same grading as $\A$,
i.e. $\deg(f^{\operatorname{sop}}) = \deg(f)$.
Similarly, if $\AA$ is a graded 2-supercategory
then $\AA^{\operatorname{sop}}$ is a graded 2-supercategory.
If $\AA$ is a graded $(Q,\Pi)$-2-supercategory in the sense of
\cite[Definition 6.5]{BE},
with
structure maps
$\sigma_\lambda:q_\lambda \stackrel{\sim}{\rightarrow}
1_\lambda,
\bar\sigma_\lambda:q_\lambda^{-1}\stackrel{\sim}{\rightarrow}
1_\lambda$ and $\zeta_\lambda:\pi_\lambda \stackrel{\sim}{\rightarrow}
1_\lambda$, we can regard
$\AA^{\operatorname{sop}}$ as a graded $(Q,\Pi)$-2-supercategory 
by declaring that its structure maps are
$(\bar\sigma_\lambda^{-1})^{\operatorname{sop}}:q_\lambda^{-1}
\stackrel{\sim}{\rightarrow} 1_\lambda$,
$(\sigma_\lambda^{-1})^{\operatorname{sop}}:q_\lambda \stackrel{\sim}{\rightarrow}
1_\lambda$ and $(\zeta_\lambda^{-1})^{\operatorname{sop}}:\pi_\lambda
\stackrel{\sim}{\rightarrow} 1_\lambda$. The key point here is that we
have interchanged the roles of $q$ and $q^{-1}$.

\begin{lemma}\label{classesstarttomorrow}
Suppose that $\AA$ and $\BB$ are graded 2-supercategories, and recall the
definition of their $(Q,\Pi)$-envelopes $\AA_{q,\pi}$ 
and $\BB_{q,\pi}$
from Definition~\ref{ernie}.
Given a graded 2-superfunctor
 $\phi:\AA \rightarrow 
(\BB_{q,\pi})^{\operatorname{sop}}$,
there is a canonical induced graded 2-superfunctor
$\tilde\phi:\AA_{q,\pi}\rightarrow
(\BB_{q,\pi})^{\operatorname{sop}}$.
\end{lemma}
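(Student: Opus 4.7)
The plan is to exploit the universal nature of the $(Q,\Pi)$-envelope reviewed in Definition~\ref{ernie}: $\AA_{q,\pi}$ should be thought of as the free graded $(Q,\Pi)$-2-supercategory generated by $\AA$, in the sense that any graded 2-superfunctor $\AA \to \CC$ into a graded $(Q,\Pi)$-2-supercategory $\CC$ extends uniquely through the canonical embedding $\AA \hookrightarrow \AA_{q,\pi}$ to a $(Q,\Pi)$-2-superfunctor. I would apply this principle to $\CC := (\BB_{q,\pi})^{\operatorname{sop}}$, which by the discussion immediately preceding the lemma is itself a graded $(Q,\Pi)$-2-supercategory with structure maps $(\bar\sigma_\lambda^{-1})^{\operatorname{sop}}, (\sigma_\lambda^{-1})^{\operatorname{sop}}, (\zeta_\lambda^{-1})^{\operatorname{sop}}$, so that the roles of $q$ and $q^{-1}$ get exchanged.

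Concretely, I would set $\tilde\phi(\lambda) := \phi(\lambda)$ on objects. On a 1-morphism $Q^m \Pi^a F$ in $\AA_{q,\pi}$, I would define $\tilde\phi(Q^m \Pi^a F)$ to be the formal shift of $\phi(F)$ by $Q^m \Pi^a$ computed in $(\BB_{q,\pi})^{\operatorname{sop}}$; unwinding, if $\phi(F) = H^{\operatorname{sop}}$ for some $H \in \Hom_{\BB_{q,\pi}}(\phi(\mu),\phi(\lambda))$, this amounts to $(Q^{-m}\Pi^a H)^{\operatorname{sop}}$. On a 2-morphism $x^{n,b}_{m,a}$ with $x \in \Hom_{\AA}(F,G)$, I would apply $\phi$ to $x$ and use the structural 2-isomorphisms of the target to slot the result into the appropriately shifted hom-space; since these structural maps are isomorphisms, no information is lost, and uniqueness follows immediately from the requirement that $\tilde\phi$ commute with the $(Q,\Pi)$-structures.

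The main obstacle will be verifying that this recipe is genuinely 2-superfunctorial, which comes down to matching Koszul signs. Horizontal composition in $(\BB_{q,\pi})^{\operatorname{sop}}$ is inherited from $\BB_{q,\pi}$ (the $^{\operatorname{sop}}$ convention only reverses vertical composition), so the sign $(-1)^{c|x|+b|y|+ac+bc}$ appearing in (\ref{web}) for $\AA_{q,\pi}$ is reproduced by the identical sign from (\ref{web}) applied inside $\BB_{q,\pi}$, and the two agree automatically. For vertical composition, the Koszul sign $(-1)^{|x||y|}$ introduced by the $^{\operatorname{sop}}$ operation in the target combines with the reversal of composition order built into $^{\operatorname{sop}}$, and the inversion of $\sigma_\lambda$ and $\bar\sigma_\lambda$ in the structure maps of $(\BB_{q,\pi})^{\operatorname{sop}}$ is precisely what is needed to reconcile the $q \leftrightarrow q^{-1}$ swap with the grading shift bookkeeping. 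Once these compatibilities are checked term-by-term, 2-superfunctoriality of $\tilde\phi$ is established, and the construction is canonical in the sense of the universal property.
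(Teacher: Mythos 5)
Your proposal is correct and follows essentially the same route as the paper: view $(\BB_{q,\pi})^{\operatorname{sop}}$ as a graded $(Q,\Pi)$-2-supercategory via the inverted structure maps (which swaps $q$ and $q^{-1}$), then invoke the universal property of the $(Q,\Pi)$-envelope (\cite[Lemma~6.11(i)]{BE}) to induce $\tilde\phi$. The explicit formulas you sketch for the values on shifted 1- and 2-morphisms agree with those the paper records in the remark following the lemma.
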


\begin{proof}
View
$(\BB_{q,\pi})^{\operatorname{sop}}$
as a graded $(Q,\Pi)$-2-supercategory as explained above.
Then apply the universal property of $(Q,\Pi)$-envelopes from
\cite[Lemma~6.11(i)]{BE}.
\end{proof}

\begin{remark}
In the setup of Lemma~\ref{classesstarttomorrow}, 
the construction from the proof of \cite[Lemma 6.11(i)]{BE}
implies the following explicit description for
$\tilde \phi$.
It is equal to 
$\phi$ on objects. 
On a 1-morphism $F$ in $\AA$
with $\phi(F) = Q^{m'} \Pi^{a'} F'$ for a 1-morphism $F'$ in $\BB$, 
we have that
$\tilde\phi(Q^m \Pi^a F) = Q^{m'-m} \Pi^{a+a'} F'.$
Given another 1-morphism $G$ in $\AA$
with $\phi(G) = Q^{n'} \Pi^{b'} G'$
and $x \in \Hom_{\AA}(F,G)$ with
$\phi(x) = \left((x')_{n',b'}^{m',a'}\right)^{\operatorname{sop}}$
for $x' \in \Hom_{\BB}(G',F')$, we
have that
$$
\tilde\phi\left(x_{m,a}^{n,b}\right)= (-1)^{a|x|+b|x|+ab+b}
\left((x')_{n'-n,b+b'}^{m'-m,a+a'}\right)^{\operatorname{sop}}.
$$
Note also that $\tilde \phi$ is not strict (even if $\phi$ itself is
strict). 
Its coherence map $$
\tilde c_{Q^n \Pi^b G, Q^m \Pi^a F}:
\tilde \phi(Q^n \Pi^b G) \tilde\phi(Q^m \Pi^a F)
\stackrel{\sim}{\rightarrow} \tilde\phi(Q^{m+n} \Pi^{a+b} GF)
$$
is
$(-1)^{ab} \left(f^{m'+n'-m-n,a+b+a'+b'}_{k'-m-n,a+b+c'}\right)^{\operatorname{sop}}$,
where $\left(f^{m'+n',a'+b'}_{k',c'}\right)^{\operatorname{sop}}$
denotes
the coherence map 
$c_{G,F}:\phi(G) \phi(F) \stackrel{\sim}{\rightarrow} \phi(GF)$
for $\phi$, for 
$H'$ defined so that
$\phi(GF) = Q^{k'} \Pi^{c'} H'$ and
$f \in \Hom_{\BB}(H', G'F')$.
\end{remark}

Since we are assuming now that the parameters satisfy (\ref{hc}),
the Kac-Moody 2-supercategory
$\UU(\g)$ is a graded 2-supercategory with $\Z$-grading defined
as in the introduction. Let $\UU_{q,\pi}(\g)$ be its $(Q,\Pi)$-envelope from
Definition~\ref{ernie}.
We now proceed to define the categorical counterparts of the
antilinear automorphisms (\ref{omega})--(\ref{psi}).
Actually, the first was already defined in Proposition~\ref{opiso}, but we
need to extend this to the envelope.

\begin{proposition}\label{prop1}
There is an isomorphism of graded 2-supercategories
$$\tilde\omega:\UU_{q,\pi}(\g) \stackrel{\sim}{\rightarrow}
\UU_{q,\pi}(\g)^{\operatorname{sop}}
$$
defined on objects by $\lambda \mapsto -\lambda$ and 1-morphisms by
$Q^m \Pi^a E_i 1_\lambda \mapsto Q^{-m} \Pi^a F_i
1_{-\lambda}$,
$Q^m \Pi^a F_i 1_\lambda \mapsto Q^{-m} \Pi^a E_i 1_{-\lambda}$.
\end{proposition}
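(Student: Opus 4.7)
The plan is to invoke Lemma~\ref{classesstarttomorrow} applied to the composition
$$
\phi: \UU(\g) \xrightarrow{\T} \UU(\g)^{\operatorname{sop}} \hookrightarrow \left(\UU_{q,\pi}(\g)\right)^{\operatorname{sop}},
$$
where $\T$ is the Chevalley involution from Proposition~\ref{opiso} and the second arrow is the obvious inclusion coming from the embedding $\UU(\g) \hookrightarrow \UU_{q,\pi}(\g)$ (sending $F$ to $Q^0 \Pi^{\bar 0} F$ and $x$ to $x^{0,\bar 0}_{0,\bar 0}$). Assuming $\phi$ is a graded $2$-superfunctor, Lemma~\ref{classesstarttomorrow} produces $\tilde\omega = \tilde\phi : \UU_{q,\pi}(\g) \to \UU_{q,\pi}(\g)^{\operatorname{sop}}$, and the explicit description in the remark after Lemma~\ref{classesstarttomorrow} gives $\tilde\omega(Q^m \Pi^a E_i 1_\lambda) = Q^{-m} \Pi^a F_i 1_{-\lambda}$ and the analogous formula for $F_i$, using $\phi(E_i 1_\lambda) = F_i 1_{-\lambda}$ with $m' = 0, a' = \bar 0$.

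What needs to be checked to legitimize the first step is that $\T$ is degree-preserving as a $2$-superfunctor $\UU(\g) \to \UU(\g)^{\operatorname{sop}}$ for the $\Z$-grading defined in the introduction (remembering that $\AA^{\operatorname{sop}}$ inherits the same grading as $\AA$). This is a routine verification on the six generators $x, \tau, \eta, \eps, \eta', \eps'$ using the table of degrees in the introduction together with the explicit formulas for $\T$ given in Proposition~\ref{opiso}: for example, $\T$ sends the upward dot of degree $2d_i$ at weight $\lambda$ to (a scalar multiple of) the downward dot at weight $-\lambda$, whose degree is also $2d_i$; similarly $\T$ swaps $\eta$ and $\eps'$ (up to scalar), which have degrees $d_i(1+\langle h_i,\lambda\rangle)$ and $d_i(1+\langle h_i,-(-\lambda)\rangle)$ respectively, hence match, and analogously for the other generators.

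To show $\tilde\omega$ is an isomorphism, I would apply the same construction starting from the inverse $\T^{-1} = \T$ viewed as a $2$-superfunctor $\UU(\g)^{\operatorname{sop}} \to \UU(\g)$, combined with the canonical identification $(\AA^{\operatorname{sop}})^{\operatorname{sop}} = \AA$; this yields a graded $2$-superfunctor $\tilde\omega': \UU_{q,\pi}(\g)^{\operatorname{sop}} \to \UU_{q,\pi}(\g)$. Since both $\tilde\omega' \circ \tilde\omega$ and the identity $2$-superfunctor agree on all generators of $\UU_{q,\pi}(\g)$ (as follows from $\T^2 = \operatorname{id}$ and the explicit formulas for the construction), and similarly for $\tilde\omega \circ \tilde\omega'$, they must be equal by the universal property.

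The main obstacle is purely bookkeeping: tracking the signs and shift indices in the formula in the remark after Lemma~\ref{classesstarttomorrow} when combined with the sign conventions appearing in the $(Q,\Pi)$-envelope composition law (\ref{web}) and the reindexing forced by passing to $\operatorname{sop}$. In particular, one has to verify that the swap of the structure maps $\sigma_\lambda \leftrightarrow \bar\sigma_\lambda^{-1}$ (interchanging the roles of $q$ and $q^{-1}$) is exactly what makes the construction produce the degree-negating formula $Q^m \mapsto Q^{-m}$ rather than a degree-preserving one; but this compatibility is already built into Lemma~\ref{classesstarttomorrow}, so once the generator-level check is in hand, no further substantive work is required.
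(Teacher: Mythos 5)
Your proposal is correct and follows essentially the same route as the paper: the paper's proof likewise composes the Chevalley involution of Proposition~\ref{opiso} with the canonical inclusion $\UU(\g)^{\operatorname{sop}} \hookrightarrow \UU_{q,\pi}(\g)^{\operatorname{sop}}$ to get a strict graded 2-superfunctor $\UU(\g) \rightarrow \UU_{q,\pi}(\g)^{\operatorname{sop}}$, and then invokes Lemma~\ref{classesstarttomorrow}. Your additional remarks on verifying degree-preservation of $\T$ and on deducing invertibility from $\T^2 = \operatorname{id}$ are consistent with (and slightly more explicit than) what the paper records.
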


\begin{proof}
If we compose the strict 2-superfunctor from 
Proposition~\ref{opiso} with the canonical inclusion 
$\UU(\g)^{\operatorname{sop}} 
\rightarrow \UU_{q,\pi}(\g)^{\operatorname{sop}}$, we obtain
 a strict graded 2-superfunctor
$\omega:\UU(\g) \rightarrow
\UU_{q,\pi}(\g)^{\operatorname{sop}}$.
This is defined on objects by $\lambda \mapsto \lambda$, on
1-morphisms
 by
$E_i 1_\lambda \mapsto Q^0 \Pi^\0 F_i 1_{-\lambda},
F_i 1_\lambda \mapsto Q^0 \Pi^\0 E_i 1_{-\lambda}$,
and on 2-morphisms by the following:
\begin{align*}
\mathord{
\begin{tikzpicture}[baseline = 0]
	\draw[->,thick,darkred] (0.08,-.3) to (0.08,.4);
      \node at (0.08,0.05) {$\color{darkred}\bullet$};
   \node at (0.08,-.4) {$\scriptstyle{i}$};
\end{tikzpicture}
}
{\scriptstyle\lambda}
&\mapsto
\mathord{
\begin{tikzpicture}[baseline = 0]
	\draw[<-,thick,darkred] (0.08,-.3) to (0.08,.4);
      \node at (0.08,0.05) {$\color{darkred}\bullet$};
   \node at (.4,.05) {$\scriptstyle{-\lambda}$};
\draw[-,thin,red](.4,-.3) to (-.22,-.3);
\draw[-,thin,red](.4,.4) to (-.22,.4);
\node at (.55,-.3) {$\color{red}\scriptstyle \0$};
\node at (.55,.4) {$\color{red}\scriptstyle \0$};
\node at (-.37,.4) {$\color{red}\scriptstyle 0$};
\node at (-.37,-.3) {$\color{red}\scriptstyle 0$};
   \node at (0.08,.52) {$\scriptstyle{i}$};
\node at (.95,.45) {$\scriptstyle\operatorname{sop}$};
\end{tikzpicture}
},
&\mathord{
\begin{tikzpicture}[baseline = 0]
	\draw[->,thick,darkred] (0.28,-.3) to (-0.28,.4);
	\draw[->,thick,darkred] (-0.28,-.3) to (0.28,.4);
   \node at (-0.28,-.4) {$\scriptstyle{i}$};
   \node at (0.28,-.4) {$\scriptstyle{j}$};
   \node at (.4,.05) {$\scriptstyle{\lambda}$};
\end{tikzpicture}
}
&\mapsto
-(-1)^{|i||j|}\mathord{
\begin{tikzpicture}[baseline = 0]
	\draw[<-,thick,darkred] (0.28,-.3) to [out=90,in=-90] (-0.28,.4);
	\draw[<-,thick,darkred] (-0.28,-.3) to 
[out=90,in=-90] (0.28,.4);
   \node at (.42,.05) {$\scriptstyle{-\lambda}$};
\draw[-,thin,red](.4,-.3) to (-.4,-.3);
\draw[-,thin,red](.4,.4) to (-.4,.4);
\node at (.55,-.3) {$\color{red}\scriptstyle \0$};
\node at (.55,.4) {$\color{red}\scriptstyle \0$};
\node at (-.55,.4) {$\color{red}\scriptstyle 0$};
\node at (-.55,-.3) {$\color{red}\scriptstyle 0$};
   \node at (-0.28,.52) {$\scriptstyle{i}$};
   \node at (0.28,.52) {$\scriptstyle{j}$};
\node at (.95,.45) {$\scriptstyle\operatorname{sop}$};
\end{tikzpicture}
},\\
\mathord{
\begin{tikzpicture}[baseline = 0]
	\draw[<-,thick,darkred] (0.4,0.3) to[out=-90, in=0] (0.1,-0.1);
	\draw[-,thick,darkred] (0.1,-0.1) to[out = 180, in = -90] (-0.2,0.3);
    \node at (-0.2,.4) {$\scriptstyle{i}$};
  \node at (0.3,-0.15) {$\scriptstyle{\lambda}$};
\end{tikzpicture}
}
&\mapsto
\mathord{
\begin{tikzpicture}[baseline = 0]
	\draw[<-,thick,darkred] (0.4,-0.3) to[out=90, in=0] (0.1,0.1);
	\draw[-,thick,darkred] (0.1,0.1) to[out = 180, in = 90] (-0.2,-0.3);
 \node at (0.54,0.05) {$\scriptstyle{-\lambda}$};
\draw[-,thin,red](.55,-.3) to (-.4,-.3);
\draw[-,thin,red](.55,.3) to (-.4,.3);
\node at (.7,-.3) {$\color{red}\scriptstyle \0$};
\node at (.7,.3) {$\color{red}\scriptstyle \0$};
\node at (-.55,.3) {$\color{red}\scriptstyle 0$};
\node at (-.55,-.3) {$\color{red}\scriptstyle 0$};
    \node at (-0.2,-.44) {$\scriptstyle{i}$};
\node at (1.05,.42) {$\scriptstyle\operatorname{sop}$};
\end{tikzpicture}
},
&
\mathord{
\begin{tikzpicture}[baseline = 0]
	\draw[<-,thick,darkred] (0.4,-0.1) to[out=90, in=0] (0.1,0.3);
	\draw[-,thick,darkred] (0.1,0.3) to[out = 180, in = 90] (-0.2,-0.1);
    \node at (-0.2,-.2) {$\scriptstyle{i}$};
  \node at (0.3,0.4) {$\scriptstyle{\lambda}$};
\end{tikzpicture}
}&\mapsto
\mathord{
\begin{tikzpicture}[baseline = 0]
	\draw[<-,thick,darkred] (0.4,0.3) to[out=-90, in=0] (0.1,-0.1);
	\draw[-,thick,darkred] (0.1,-0.1) to[out = 180, in = -90] (-0.2,0.3);
 \node at (0.55,0) {$\scriptstyle{-\lambda}$};
\draw[-,thin,red](.55,-.3) to (-.4,-.3);
\draw[-,thin,red](.55,.3) to (-.4,.3);
\node at (.7,.3) {$\color{red}\scriptstyle \0$};
\node at (.7,-.3) {$\color{red}\scriptstyle \0$};
\node at (-.55,.3) {$\color{red}\scriptstyle 0$};
\node at (-.55,-.3) {$\color{red}\scriptstyle 0$};
    \node at (-0.2,.44) {$\scriptstyle{i}$};
\node at (1.05,.45) {$\scriptstyle\operatorname{sop}$};
\end{tikzpicture}
}.
\end{align*}
It remains to apply Lemma~\ref{classesstarttomorrow} to get the desired graded
2-superfunctor
$\tilde\omega$ (which is no longer strict).
\end{proof}

\begin{proposition}\label{prop2}
Assume that there is a given element $\sqrt{-1} \in \k_{\0}$ which
squares to $-1$. Then there is an isomorphism of graded
2-supercategories $$\tilde\psi:\UU_{q,\pi}(\g) \stackrel{\sim}{\rightarrow}
\UU_{q,\pi}(\g)^{\operatorname{sop}}
$$
defined on objects by $\lambda \mapsto \lambda$ and 1-morphisms by
$Q^m \Pi^a E_i 1_\lambda \mapsto Q^{-m} \Pi^a E_i
1_{\lambda}$,
$Q^m \Pi^a F_i 1_\lambda \mapsto Q^{-m} \Pi^{a+|i,\lambda|} F_i 1_{\lambda}$.
\end{proposition}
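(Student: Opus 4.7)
My plan follows the three-step strategy used for Proposition~\ref{prop1}. First, I would construct a strict graded 2-superfunctor $\psi:\UU(\g)\to\UU_{q,\pi}(\g)^{\operatorname{sop}}$ which is the identity on objects, sends the generating 1-morphisms via $E_i 1_\lambda\mapsto Q^0\Pi^\0 E_i 1_\lambda$ and $F_i 1_\lambda\mapsto Q^0\Pi^{|i,\lambda|}F_i 1_\lambda$, and sends the generating 2-morphisms $x,\tau,\eta,\eps$ to prescribed scalar multiples of their ``sop'' counterparts (in the case of $x$ and $\tau$) and of $\eps'^{\operatorname{sop}}$ and $\eta'^{\operatorname{sop}}$ (in the case of $\eta$ and $\eps$), with the exact scalars chosen so that all defining relations from Definition~\ref{def1} are preserved. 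Second, I would appeal to Lemma~\ref{classesstarttomorrow} to extend $\psi$ to a graded 2-superfunctor $\tilde\psi$ out of the $(Q,\Pi)$-envelope, whose effect on shifted 1-morphisms is then automatically the one described in the statement. Third, I would verify that $\tilde\psi$ is an isomorphism by constructing an explicit inverse of the same form, with the $\sqrt{-1}$-factors negated, and checking on generators that the composition is the identity.

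The hypothesis $\sqrt{-1}\in\k_\0$ enters because vertical composition in $\UU_{q,\pi}(\g)^{\operatorname{sop}}$ is given by the rule $f^{\operatorname{sop}}\circ g^{\operatorname{sop}}=(-1)^{|f||g|}(g\circ f)^{\operatorname{sop}}$ from Definition~\ref{curry}; for odd $i$ this rule causes powers of $\psi(x)$ to acquire extra signs when pulled back to $\UU(\g)$, and a prefactor of $\sqrt{-1}$ on $\psi(x)$ is exactly what is needed to cancel these signs so that $\psi(x^{\circ n})$ is expressible as a clean scalar multiple of $(x^{\operatorname{sop}})^{\circ n}$. Analogous prefactors on $\psi(\tau)$ are needed to compensate for the reshuffled signs $(-1)^{|i||j|}$ appearing in the quiver Hecke relations (\ref{now}) and (\ref{qhalast}), while the $\Pi^{|i,\lambda|}$-shift on the image of $F_i 1_\lambda$ is precisely what matches the parity of $\psi(\eta)$ and $\psi(\eps)$ to that of the corresponding leftward cups and caps produced by Definition~\ref{def3}. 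The overall pattern parallels that of Proposition~\ref{opiso}, but the simpler $\T(x)=x^{\operatorname{sop}}$ there is replaced by a rescaled version here because $\psi$, unlike $\T$, must descend to an honest antilinear algebra involution of the covering quantum group.

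The main obstacle will be the sign-tracking needed to verify the defining relations, especially the inversion relations (\ref{inv1})--(\ref{inv3}). Under $\psi$ the rightward cups and caps $\eta,\eps$ are traded for leftward ones, and the composition defining the inverse of (\ref{inv2})--(\ref{inv3}) must be reinterpreted using Proposition~\ref{adjdone}, the pitchfork identities of Proposition~\ref{pitch}, and the dot-slide formulas of Section~\ref{sr2}. Keeping track of the $c_{\lambda;i}$-factors (which interact nontrivially with (\ref{a4}) and with the $\Pi^{|i,\lambda|}$-shift) alongside the $\sqrt{-1}$-prefactors is a delicate bookkeeping exercise, and the freedom in choosing signs on the generators must be used up precisely to make everything balance; this is much the same kind of calculation as the one sketched in the proof of Proposition~\ref{opiso}, only more intricate. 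Once $\psi$ is shown to be well defined and the inverse $\psi'$ constructed symmetrically, applying Lemma~\ref{classesstarttomorrow} to both and composing gives $\tilde\psi\circ\tilde\psi'=\operatorname{id}$ on generators, hence everywhere, completing the proof.
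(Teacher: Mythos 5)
Your proposal follows the paper's proof essentially verbatim: the paper defines exactly this strict 2-superfunctor $\psi:\UU(\g)\to\UU_{q,\pi}(\g)^{\operatorname{sop}}$ on generators (with $\sqrt{-1}$-prefactors on odd dots and odd crossings, and with $\eta,\eps$ sent to suitably shifted sops of $\eps',\eta'$), checks the defining relations --- noting in particular that the adjunction relations require care because of the signs from (\ref{web}) --- and then invokes Lemma~\ref{classesstarttomorrow}. The only differences are cosmetic: the paper does not exhibit an explicit inverse (invertibility being clear from the formulas on generators), and the precise role of $\sqrt{-1}$ is to fix signs in the relations (\ref{now})--(\ref{qhalast}) rather than to make $\psi(x^{\circ n})$ a scalar multiple of $(x^{\circ n})^{\operatorname{sop}}$, which it would be in any case.
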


\begin{proof}
We claim that there is a strict graded 2-superfunctor
$\psi:\UU(\g) \rightarrow \UU_{q,\pi}(\g)^{\operatorname{sop}}$ which is
defined on objects by $\lambda \mapsto \lambda$, 
1-morphisms by $E_i 1_\lambda \mapsto Q^0 \Pi^\0 E_i 1_\lambda$,
$F_i 1_\lambda \mapsto Q^0 \Pi^{|i,\lambda|} F_i 1_\lambda$, and
2-morphisms by the following:
\begin{align*}
\mathord{
\begin{tikzpicture}[baseline = 0]
	\draw[->,thick,darkred] (0.08,-.3) to (0.08,.4);
      \node at (0.08,0.05) {$\color{darkred}\bullet$};
   \node at (0.08,-.4) {$\scriptstyle{i}$};
\end{tikzpicture}
}
{\scriptstyle\lambda}
&\mapsto
\left\{
\begin{array}{rl}
\mathord{
\begin{tikzpicture}[baseline = 0]
	\draw[->,thick,darkred] (0.08,-.3) to (0.08,.4);
      \node at (0.08,0.05) {$\color{darkred}\bullet$};
   \node at (.35,.05) {$\scriptstyle{\lambda}$};
\draw[-,thin,red](.4,-.3) to (-.22,-.3);
\draw[-,thin,red](.4,.4) to (-.22,.4);
\node at (.55,-.3) {$\color{red}\scriptstyle \0$};
\node at (.55,.4) {$\color{red}\scriptstyle \0$};
\node at (-.37,.4) {$\color{red}\scriptstyle 0$};
\node at (-.37,-.3) {$\color{red}\scriptstyle 0$};
   \node at (0.08,-.42) {$\scriptstyle{i}$};
\node at (.95,.45) {$\scriptstyle\operatorname{sop}$};
\end{tikzpicture}
}&\text{if $|i|=\0$,}\\
\sqrt{-1}\mathord{
\begin{tikzpicture}[baseline = 0]
	\draw[->,thick,darkred] (0.08,-.3) to (0.08,.4);
      \node at (0.08,0.05) {$\color{darkred}\bullet$};
   \node at (.35,.05) {$\scriptstyle{\lambda}$};
\draw[-,thin,red](.4,-.3) to (-.22,-.3);
\draw[-,thin,red](.4,.4) to (-.22,.4);
\node at (.55,-.3) {$\color{red}\scriptstyle \0$};
\node at (.55,.4) {$\color{red}\scriptstyle \0$};
\node at (-.37,.4) {$\color{red}\scriptstyle 0$};
\node at (-.37,-.3) {$\color{red}\scriptstyle 0$};
   \node at (0.08,-.42) {$\scriptstyle{i}$};
\node at (.95,.45) {$\scriptstyle\operatorname{sop}$};
\end{tikzpicture}
}&\text{if $|i|=\1$},
\end{array}\right.\\
\mathord{
\begin{tikzpicture}[baseline = 0]
	\draw[->,thick,darkred] (0.28,-.3) to (-0.28,.4);
	\draw[->,thick,darkred] (-0.28,-.3) to (0.28,.4);
   \node at (-0.28,-.4) {$\scriptstyle{i}$};
   \node at (0.28,-.4) {$\scriptstyle{j}$};
   \node at (.4,.05) {$\scriptstyle{\lambda}$};
\end{tikzpicture}
}
&\mapsto
\left\{
\begin{array}{rl}
\mathord{
\begin{tikzpicture}[baseline = 0]
	\draw[->,thick,darkred] (0.28,-.3) to [out=90,in=-90] (-0.28,.4);
	\draw[->,thick,darkred] (-0.28,-.3) to 
[out=90,in=-90] (0.28,.4);
   \node at (.4,.05) {$\scriptstyle{\lambda}$};
\draw[-,thin,red](.4,-.3) to (-.4,-.3);
\draw[-,thin,red](.4,.4) to (-.4,.4);
\node at (.55,-.3) {$\color{red}\scriptstyle \0$};
\node at (.55,.4) {$\color{red}\scriptstyle \0$};
\node at (-.55,.4) {$\color{red}\scriptstyle 0$};
\node at (-.55,-.3) {$\color{red}\scriptstyle 0$};
   \node at (-0.28,-.42) {$\scriptstyle{j}$};
   \node at (0.28,-.42) {$\scriptstyle{i}$};
\node at (.95,.45) {$\scriptstyle\operatorname{sop}$};
\end{tikzpicture}
}&\text{if $|i||j|=\0$},\\
\sqrt{-1}\mathord{
\begin{tikzpicture}[baseline = 0]
	\draw[->,thick,darkred] (0.28,-.3) to [out=90,in=-90] (-0.28,.4);
	\draw[->,thick,darkred] (-0.28,-.3) to 
[out=90,in=-90] (0.28,.4);
   \node at (.4,.05) {$\scriptstyle{\lambda}$};
\draw[-,thin,red](.4,-.3) to (-.4,-.3);
\draw[-,thin,red](.4,.4) to (-.4,.4);
\node at (.55,-.3) {$\color{red}\scriptstyle \0$};
\node at (.55,.4) {$\color{red}\scriptstyle \0$};
\node at (-.55,.4) {$\color{red}\scriptstyle 0$};
\node at (-.55,-.3) {$\color{red}\scriptstyle 0$};
   \node at (-0.28,-.42) {$\scriptstyle{j}$};
   \node at (0.28,-.42) {$\scriptstyle{i}$};
\node at (.95,.45) {$\scriptstyle\operatorname{sop}$};
\end{tikzpicture}
}&\text{if $|i||j|=\1$},
\end{array}\right.
\\
\mathord{
\begin{tikzpicture}[baseline = 0]
	\draw[<-,thick,darkred] (0.4,0.3) to[out=-90, in=0] (0.1,-0.1);
	\draw[-,thick,darkred] (0.1,-0.1) to[out = 180, in = -90] (-0.2,0.3);
    \node at (-0.2,.4) {$\scriptstyle{i}$};
  \node at (0.3,-0.15) {$\scriptstyle{\lambda}$};
\end{tikzpicture}
}
&\mapsto
\mathord{
\begin{tikzpicture}[baseline = 0]
	\draw[-,thick,darkred] (0.4,-0.3) to[out=90, in=0] (0.1,0.1);
	\draw[->,thick,darkred] (0.1,0.1) to[out = 180, in = 90] (-0.2,-0.3);
 \node at (0.5,0.05) {$\scriptstyle{\lambda}$};
\draw[-,thin,red](.55,-.3) to (-.4,-.3);
\draw[-,thin,red](.55,.3) to (-.4,.3);
\node at (.9,-.3) {$\color{red}\scriptstyle |i,\lambda|$};
\node at (.7,.3) {$\color{red}\scriptstyle \0$};
\node at (-.55,.3) {$\color{red}\scriptstyle 0$};
\node at (-.55,-.3) {$\color{red}\scriptstyle 0$};
    \node at (0.4,-.44) {$\scriptstyle{i}$};
\node at (1.05,.42) {$\scriptstyle\operatorname{sop}$};
\end{tikzpicture}
},\\
\mathord{
\begin{tikzpicture}[baseline = 0]
	\draw[<-,thick,darkred] (0.4,-0.1) to[out=90, in=0] (0.1,0.3);
	\draw[-,thick,darkred] (0.1,0.3) to[out = 180, in = 90] (-0.2,-0.1);
    \node at (-0.2,-.2) {$\scriptstyle{i}$};
  \node at (0.3,0.4) {$\scriptstyle{\lambda}$};
\end{tikzpicture}
}&\mapsto
(-1)^{|i,\lambda|}\mathord{
\begin{tikzpicture}[baseline = 0]
	\draw[-,thick,darkred] (0.4,0.3) to[out=-90, in=0] (0.1,-0.1);
	\draw[->,thick,darkred] (0.1,-0.1) to[out = 180, in = -90] (-0.2,0.3);
 \node at (0.5,0) {$\scriptstyle{\lambda}$};
\draw[-,thin,red](.55,-.3) to (-.4,-.3);
\draw[-,thin,red](.55,.3) to (-.4,.3);
\node at (.9,.3) {$\color{red}\scriptstyle |i,\lambda|$};
\node at (.7,-.3) {$\color{red}\scriptstyle \0$};
\node at (-.55,.3) {$\color{red}\scriptstyle 0$};
\node at (-.55,-.3) {$\color{red}\scriptstyle 0$};
    \node at (0.4,.44) {$\scriptstyle{i}$};
\node at (1.4,.45) {$\scriptstyle\operatorname{sop}$};
\end{tikzpicture}
}.
\end{align*}
To prove the claim, one needs to verify the relations. 
Note to start with that
$$
\mathord{
\begin{tikzpicture}[baseline = 0]
	\draw[->,thick,darkred] (0.08,-.3) to (0.08,.4);
      \node at (0.08,0.05) {$\color{darkred}\bullet$};
   \node at (0.08,-.4) {$\scriptstyle{i}$};
   \node at (-0.15,.05) {$\color{darkred}\scriptstyle{n}$};
\end{tikzpicture}
}
{\scriptstyle\lambda}
\mapsto
\left\{
\begin{array}{rl}
\mathord{
\begin{tikzpicture}[baseline = 0]
	\draw[->,thick,darkred] (0.08,-.3) to (0.08,.4);
      \node at (0.08,0.05) {$\color{darkred}\bullet$};
   \node at (-0.15,.05) {$\color{darkred}\scriptstyle{n}$};
   \node at (.35,.05) {$\scriptstyle{\lambda}$};
\draw[-,thin,red](.4,-.3) to (-.22,-.3);
\draw[-,thin,red](.4,.4) to (-.22,.4);
\node at (.55,-.3) {$\color{red}\scriptstyle \0$};
\node at (.55,.4) {$\color{red}\scriptstyle \0$};
\node at (-.37,.4) {$\color{red}\scriptstyle 0$};
\node at (-.37,-.3) {$\color{red}\scriptstyle 0$};
   \node at (0.08,-.42) {$\scriptstyle{i}$};
\node at (.95,.45) {$\scriptstyle\operatorname{sop}$};
\end{tikzpicture}
}&\text{if $n|i|=\0$,}\\
\sqrt{-1}\mathord{
\begin{tikzpicture}[baseline = 0]
	\draw[->,thick,darkred] (0.08,-.3) to (0.08,.4);
      \node at (0.08,0.05) {$\color{darkred}\bullet$};
   \node at (-0.15,.05) {$\color{darkred}\scriptstyle{n}$};
   \node at (.35,.05) {$\scriptstyle{\lambda}$};
\draw[-,thin,red](.4,-.3) to (-.22,-.3);
\draw[-,thin,red](.4,.4) to (-.22,.4);
\node at (.55,-.3) {$\color{red}\scriptstyle \0$};
\node at (.55,.4) {$\color{red}\scriptstyle \0$};
\node at (-.37,.4) {$\color{red}\scriptstyle 0$};
\node at (-.37,-.3) {$\color{red}\scriptstyle 0$};
   \node at (0.08,-.42) {$\scriptstyle{i}$};
\node at (.95,.45) {$\scriptstyle\operatorname{sop}$};
\end{tikzpicture}
}&\text{if $n|i|=\1$}.
\end{array}\right.$$
Using this, the quiver Hecke superalgebra relations (\ref{now})--(\ref{qhalast})
are straightforward. 
The inversion relations (\ref{inv1})--(\ref{inv3}) are also
fine. 
The adjunction relations (\ref{rightadj}) need a little more
care since the signs coming from (\ref{web}) play a role.
Then apply Lemma~\ref{classesstarttomorrow} to get the desired graded
2-superfunctor
$\tilde\psi$ (which is no longer strict).
\end{proof}

\begin{definition}
Let $\AA$ be a graded 2-supercategory.
Define $\AA^{\operatorname{srev}}$ to be the graded
2-supercategory
with the same objects as $\AA$, and morphism categories
$$
\Hom_{\AA^{\operatorname{srev}}}(\mu,\lambda) :=
\Hom_{\AA}(\lambda,\mu).
$$
We write
$F^{\operatorname{srev}}:\mu\rightarrow\lambda$ (resp.\
  $x^{\operatorname{srev}}:F^{\operatorname{srev}}\rightarrow
  G^{\operatorname{srev}}$) 
for the
1-morphism (resp. 2-morphism) 
in $\AA^{\operatorname{srev}}$ defined by the
1-morphism $F:\lambda\rightarrow\mu$ (resp. $x:F \rightarrow G$) in $\AA$.
Then, horizontal composition in $\AA^{\operatorname{srev}}$
is defined on 1-morphisms by $(F^{\operatorname{srev}})(G^{\operatorname{srev}})
:= (GF)^{\operatorname{srev}}$
and on homogeneous 2-morphisms by
$(x^{\operatorname{srev}})(y^{\operatorname{srev}}) :=
(-1)^{|x||y|}(yx)^{\operatorname{srev}}$.
Vertical composition of 2-morphisms in $\AA^{\operatorname{srev}}$ is the same as
in $\AA$.
Here is the check of the super interchange law in
$\AA^{\operatorname{srev}}$:
\begin{align*}
(x^{\operatorname{srev}} y^{\operatorname{srev}}) \circ
(u^{\operatorname{srev}}v^{\operatorname{srev}})
&=
(-1)^{|x||y|+|u||v|}
 (yx)^{\operatorname{srev}} \circ (vu)^{\operatorname{srev}}\\
&=
(-1)^{|x||y|+|u||v|}
 ((yx) \circ (vu))^{\operatorname{srev}}\\
&=
(-1)^{|x||y|+|u||v|+|x||v|}
((y \circ v) (x \circ u))^{\operatorname{srev}}\\
&= 
(-1)^{|y||u|} (x \circ u)^{\operatorname{srev}} (y \circ
v)^{\operatorname{srev}}\\
&=
(-1)^{|y||u|} (x^{\operatorname{srev}} \circ u^{\operatorname{srev}}) (y^{\operatorname{srev}} \circ
v^{\operatorname{srev}}).
\end{align*}
\end{definition}
 
If $\AA$ is a graded $(Q,\Pi)$-2-supercategory
with
structure maps
$\sigma_\lambda:q_\lambda \stackrel{\sim}{\rightarrow}
1_\lambda,
\bar\sigma_\lambda:q_\lambda^{-1}\stackrel{\sim}{\rightarrow}
1_\lambda$ and $\zeta_\lambda:\pi_\lambda \stackrel{\sim}{\rightarrow}
1_\lambda$, we can regard
$\AA^{\operatorname{srev}}$ as a graded $(Q,\Pi)$-2-supercategory 
by declaring that its structure maps are
$(\sigma_\lambda)^{\operatorname{srev}}:(q_\lambda)^{\operatorname{srev}}
\stackrel{\sim}{\rightarrow} (1_\lambda)^{\operatorname{srev}}$,
$(\bar\sigma_\lambda)^{\operatorname{srev}}:(q_\lambda^{-1})^{\operatorname{srev}} \stackrel{\sim}{\rightarrow}
(1_\lambda)^{\operatorname{srev}}$ and $(\zeta_\lambda)^{\operatorname{srev}}:(\pi_\lambda)^{\operatorname{srev}}
\stackrel{\sim}{\rightarrow} (1_\lambda)^{\operatorname{srev}}$.

\begin{lemma}\label{classesshouldhavestarted}
Suppose that $\AA$ and $\BB$ are graded 2-supercategories.
Given a graded 2-superfunctor
 $\phi:\AA \rightarrow 
(\BB_{q,\pi})^{\operatorname{srev}}$,
there is a canonical induced graded 2-superfunctor
$\tilde\phi:\AA_{q,\pi}\rightarrow
(\BB_{q,\pi})^{\operatorname{srev}}$.
\end{lemma}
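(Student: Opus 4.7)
The plan is to proceed by exact analogy with Lemma~\ref{classesstarttomorrow}, replacing the role of $\operatorname{sop}$ by that of $\operatorname{srev}$ throughout. The key preparatory step, which has already been carried out in the paragraph immediately preceding the statement, is to endow $(\BB_{q,\pi})^{\operatorname{srev}}$ with the structure of a graded $(Q,\Pi)$-2-supercategory via the structure maps $\sigma_\lambda^{\operatorname{srev}}$, $\bar\sigma_\lambda^{\operatorname{srev}}$, $\zeta_\lambda^{\operatorname{srev}}$. Note that in contrast to the $\operatorname{sop}$ case, here the roles of $q$ and $q^{-1}$ are not swapped, since $\operatorname{srev}$ reverses horizontal composition rather than vertical composition, and the grading and parity on 2-morphisms are preserved. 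Once this $(Q,\Pi)$-structure is in place, the desired $\tilde\phi$ is produced by a direct application of the universal property of the $(Q,\Pi)$-envelope from \cite[Lemma 6.11(i)]{BE}.

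More concretely, I would first unpack what $\tilde\phi$ must do on objects, 1-morphisms and 2-morphisms: on objects it agrees with $\phi$, on a 1-morphism $Q^m \Pi^a F$ it is forced to be $Q^{-m} \Pi^a \phi(F)$ (pulled through the structure isomorphisms), and on a 2-morphism $x_{m,a}^{n,b}$ it is determined up to sign by $\phi(x)$. The construction of \cite[Lemma 6.11(i)]{BE} then gives explicit formulas with coherence maps that are not strict, exactly as in the remark following Lemma~\ref{classesstarttomorrow}.

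The main obstacle, and really the only point that requires care, is the sign bookkeeping. In the $\operatorname{srev}$ setting the horizontal composition of 2-morphisms acquires a Koszul sign $(-1)^{|x||y|}$ that was not present in $\operatorname{sop}$, and this sign has to be reconciled with the sign $(-1)^{c|x|+b|y|+ac+bc}$ built into horizontal composition in the envelope (see (\ref{web})). The strategy is to write down the candidate 2-superfunctor on generators, compute both sides of the coherence/compatibility axiom for horizontal composition of two 2-morphisms $x_{m,a}^{n,b}$ and $y_{k,c}^{l,d}$ in $\AA_{q,\pi}$, and verify that the two sign contributions combine consistently. Once this calculation is carried out, the remaining axioms (vertical composition, identities, interchange) reduce to corresponding facts already verified for $\phi$ in $\AA$, and the universal property applies without further incident.

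Having established the existence of $\tilde\phi$, I would also record the explicit description of its action in a remark parallel to the one following Lemma~\ref{classesstarttomorrow}, since this explicit formula is what will actually be used when constructing the categorification of the antiautomorphism $\rho$ in the next proposition.
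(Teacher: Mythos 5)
Your proposal is correct and is exactly the paper's proof: equip $(\BB_{q,\pi})^{\operatorname{srev}}$ with the graded $(Q,\Pi)$-2-supercategory structure via $\sigma_\lambda^{\operatorname{srev}}$, $\bar\sigma_\lambda^{\operatorname{srev}}$, $\zeta_\lambda^{\operatorname{srev}}$, and invoke the universal property of the $(Q,\Pi)$-envelope from \cite[Lemma 6.11(i)]{BE}. One small correction to your concrete unpacking: since, as you yourself note, the roles of $q$ and $q^{-1}$ are \emph{not} swapped here, the formula on 1-morphisms is $\tilde\phi(Q^m\Pi^a F) = (Q^{m+m'}\Pi^{a+a'}F')^{\operatorname{srev}}$ where $\phi(F)=(Q^{m'}\Pi^{a'}F')^{\operatorname{srev}}$ --- the degree shifts add, so your ``$Q^{-m}$'' (carried over from the $\operatorname{sop}$ case) should be ``$Q^{m}$''; this matters if you intend to use the explicit description when constructing $\tilde\rho$.
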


\begin{proof}
View
$(\BB_{q,\pi})^{\operatorname{srev}}$
as a graded $(Q,\Pi)$-2-supercategory as explained above.
Then apply  
\cite[Lemma~6.11(i)]{BE}.
\end{proof}

\begin{remark}
In the setup of Lemma~\ref{classesshouldhavestarted},
the construction from the proof of \cite[Lemma 6.11(i)]{BE}
implies the following explicit description for
$\tilde \phi$.
It is equal to 
$\phi$ on objects. 
On a 1-morphism $F$ in $\AA$
with $\phi(F) = (Q^{m'} \Pi^{a'} F')^{\operatorname{srev}}$ for a 1-morphism $F'$ in $\BB$, 
we have that
$
\tilde\phi(Q^m \Pi^a F) = (Q^{m+m'} \Pi^{a+a'} F')^{\operatorname{srev}}.$
Given another 1-morphism $G$ in $\AA$
with $\phi(G) = (Q^{n'} \Pi^{b'} G')^{\operatorname{srev}}$
and $x \in \Hom_{\AA}(F,G)$ with
$\phi(x) = \left((x')_{m',a'}^{n',b'}\right)^{\operatorname{srev}}$
for $x' \in \Hom_{\BB}(F',G')$, we
have that
$$
\tilde\phi\left(x_{m,a}^{n,b}\right)= (-1)^{aa'+bb'}
\left((x')_{m+m',a+a'}^{n+n',b+b'}\right)^{\operatorname{srev}}.
$$
The coherence map $$
\tilde c_{Q^n \Pi^b G, Q^m \Pi^a F}:
\tilde \phi(Q^n \Pi^b G) \tilde\phi(Q^m \Pi^a F)
\stackrel{\sim}{\rightarrow} \tilde\phi(Q^{m+n} \Pi^{a+b} GF)
$$
is
$(-1)^{a(b+b')+(a+b)(a'+b'+c')}\left(f_{m+n+m'+n',a+b+a'+b'}^{m+n+k',a+b+c'}\right)^{\operatorname{srev}}$,
where $\left(f_{m'+n',a'+b'}^{k',c'}\right)^{\operatorname{srev}}$
denotes
the coherence map 
$c_{G,F}:\phi(G) \phi(F) \stackrel{\sim}{\rightarrow} \phi(GF)$
for $\phi$, for 
$H'$ defined so that
$\phi(GF) = Q^{k'} \Pi^{c'} H'$ and
$f \in \Hom_{\BB}(F'G', H')$.
\end{remark}

\begin{proposition}\label{prop3}
Assume that there is a given element $\sqrt{-1} \in \k_{\0}$ which
squares to $-1$. Then there is an isomorphism of graded
2-supercategories
$$
\tilde\rho:\UU_{q,\pi}(\g) \stackrel{\sim}{\rightarrow} \UU_{q,\pi}(\g)^{\operatorname{srev}}
$$
such that $\lambda \mapsto \lambda$ and
$Q^m \Pi^a E_i 1_\lambda \mapsto (Q^{m-d_i(1+\langle h_i,\lambda\rangle)}
\Pi^a 1_\lambda F_i)^{\operatorname{srev}},$
$Q^m \Pi^a F_i 1_\lambda \mapsto (Q^{m-d_i(1-\langle h_i,\lambda\rangle)}
\Pi^a 1_\lambda E_i)^{\operatorname{srev}}.$
\end{proposition}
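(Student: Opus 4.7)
The plan is to follow the pattern of Propositions~\ref{prop1} and \ref{prop2}: first construct a strict graded 2-superfunctor $\rho:\UU(\g)\rightarrow\UU_{q,\pi}(\g)^{\operatorname{srev}}$ directly on generators, verify the defining relations of $\UU(\g)$, and then appeal to Lemma~\ref{classesshouldhavestarted} to obtain $\tilde\rho$. On objects we set $\rho(\lambda):=\lambda$. On the generating 1-morphisms we set
\[
\rho(E_i1_\lambda):=\bigl(Q^{-d_i(1+\langle h_i,\lambda\rangle)}\Pi^\0\,1_\lambda F_i\bigr)^{\operatorname{srev}},
\qquad
\rho(F_i1_\lambda):=\bigl(Q^{-d_i(1-\langle h_i,\lambda\rangle)}\Pi^\0\,1_\lambda E_i\bigr)^{\operatorname{srev}},
\]
which matches the formulas for $\rho$ on $e_i1_\lambda, f_i1_\lambda$ from (\ref{rho}) after decategorification and accounts for the $q_i^{\mp(\langle h_i,\lambda\rangle+1)}$ shifts via powers of $Q$.

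On the generating 2-morphisms we use the downward dots, downward crossings, and leftward cups/caps introduced in Section~\ref{more}: the upward dot $x$ maps (up to an appropriate $\Pi$-shift and, when $i$ is odd, a factor of $\sqrt{-1}$) to the downward dot (\ref{xp}); the upward crossing $\tau$ maps to the downward crossing (\ref{xp}); and the rightward cap/cup $\varepsilon,\eta$ map to the leftward cap/cup $\varepsilon',\eta'$, with signs and parity shifts recorded by the red labels on the boxes in Definition~\ref{ernie}. The point of using the $\operatorname{srev}$ construction rather than $\operatorname{sop}$ is precisely that $\rho$ is a linear antiautomorphism (reversing composition order of 1-morphisms but preserving the direction of multiplication of scalars), so the diagrams get reflected about a \emph{horizontal} axis. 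The $Q$-shifts on the cups and caps are exactly those needed to make everything degree-preserving, using the degrees listed in the table in the introduction.

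The bulk of the work is in checking that the relations (\ref{now})--(\ref{qhalast}), (\ref{rightadj}) and (\ref{inv1})--(\ref{inv3}) all hold for the chosen image generators. Fortunately these reduced (``mirror-image'') relations have already been derived in Sections~\ref{sr1}--\ref{sr5}: the quiver Hecke superalgebra relations among downward dots/crossings follow from Lemmas~\ref{firstlemma}--\ref{pitchforka} together with (\ref{noisy}) and (\ref{lastone}); the left adjunction relations needed for $\varepsilon',\eta'$ are Proposition~\ref{adjdone}; and the invertibility of the appropriate 2-morphism built from a downward crossing and sideways caps/cups is obtained from Corollary~\ref{advances} and (\ref{pos})--(\ref{neg}). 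The signs have to match with the rules for horizontal and vertical composition in $\AA^{\operatorname{srev}}$ (in particular the $(-1)^{|x||y|}$ in the definition of horizontal composition), and one must also keep track of the extra factors in (\ref{web}) and in Lemma~\ref{classesshouldhavestarted} when lifting to the envelope; the $\sqrt{-1}$ adjustments on odd dots and crossings are inserted precisely to absorb these signs, exactly as in the proof of Proposition~\ref{prop2}.

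The main obstacle is bookkeeping: while every individual relation we need is already in hand, assembling the correct $\Pi$-shifts, $Q$-shifts, and scalar factors on each generator so that all of (\ref{now})--(\ref{qhalast}), (\ref{rightadj}), (\ref{inv1})--(\ref{inv3}) are satisfied simultaneously (and so that $\tilde\rho^2$ is naturally isomorphic to the identity, recovering that $\rho$ is an involution) is delicate. Once $\rho$ has been verified as a strict graded 2-superfunctor to $\UU_{q,\pi}(\g)^{\operatorname{srev}}$, the passage from $\rho$ to $\tilde\rho:\UU_{q,\pi}(\g)\to\UU_{q,\pi}(\g)^{\operatorname{srev}}$ is immediate from Lemma~\ref{classesshouldhavestarted}, and the explicit description of $\tilde\rho$ on shifted 1-morphisms given by the remark after that lemma yields the formulas in the statement.
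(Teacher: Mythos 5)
Your overall architecture matches the paper's: define a strict graded 2-superfunctor $\rho:\UU(\g)\rightarrow\UU_{q,\pi}(\g)^{\operatorname{srev}}$ on generators, verify the defining relations, and lift via Lemma~\ref{classesshouldhavestarted}. However, your assignment on the generating cups and caps is wrong, and the error traces back to a backwards picture of what $\operatorname{srev}$ does. Since $\operatorname{srev}$ reverses \emph{horizontal} composition and preserves vertical composition, it corresponds diagrammatically to reflection in a \emph{vertical} axis (a left--right flip); reflection in a horizontal axis is what $\operatorname{sop}$ encodes, and that is the mechanism behind $\tilde\omega$ and $\tilde\psi$, not $\tilde\rho$. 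Concretely, with $\rho(E_i1_\lambda)=(\cdots 1_\lambda F_i)^{\operatorname{srev}}$ and $\rho(F_i1_\lambda)=(\cdots 1_\lambda E_i)^{\operatorname{srev}}$ one computes $\rho(F_iE_i1_\lambda)=\bigl(Q^{?}\Pi^{?}F_iE_i1_\lambda\bigr)^{\operatorname{srev}}$ --- the two reversals (swapping $E\leftrightarrow F$ and reversing the composition order) cancel --- so the image of $\eta:1_\lambda\rightarrow F_iE_i1_\lambda$ must be the $\operatorname{srev}$ of a 2-morphism into a shift of $F_iE_i1_\lambda$, i.e.\ a \emph{rightward} cup. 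Your proposed image $\eta'$ has codomain $E_iF_i1_\lambda$ and does not type-check; likewise for $\eps\mapsto\eps'$. The paper sends $x,\tau,\eta,\eps$ to (the $\operatorname{srev}$ of) the downward dot, the downward crossing, the rightward cup and the rightward cap respectively, inserting $\sqrt{-1}$ on odd dots and $-\sqrt{-1}$ on odd crossings.

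This is not just bookkeeping. Because $\eta'$ and $\eps'$ are themselves \emph{constructed} from the inverses of the 2-morphisms in (\ref{inv1})--(\ref{inv3}), taking them as images of the generating cups and caps would make the verification of the inversion relations circular and would spoil the clean reduction of the relation check to previously derived identities. With the correct assignment, the quiver Hecke relations (\ref{now})--(\ref{qhalast}) reduce to their downward versions (\ref{downcross1})--(\ref{downcross2}), (\ref{noisy}) and (\ref{lastone}), which is the only substantive verification; the degree shifts on the images of the 1-morphisms play no role in that check and are present only so that $\tilde\rho$ decategorifies to (\ref{rho}). Finally, nothing about $\tilde\rho^2$ being the identity is needed for the statement, so that part of your ``main obstacle'' can be dropped.
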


\begin{proof}
We claim that there is a strict graded 2-superfunctor
$\rho:\UU(\g) \rightarrow \UU_{q,\pi}(\g)^{\operatorname{srev}}$ 
defined on objects by $\lambda \mapsto \lambda$, 
1-morphisms by $E_i 1_\lambda \mapsto (Q^{-d_i(1+\langle
  h_i,\lambda\rangle)} \Pi^\0 1_\lambda F_i)^{\operatorname{srev}}$,
$F_i 1_\lambda \mapsto (Q^{-d_i(1-\langle h_i,\lambda\rangle)}
\Pi^{|i,\lambda|} 1_\lambda E_i)^{\operatorname{srev}}$, and 
2-morphisms by the following:
\begin{align*}
\mathord{
\begin{tikzpicture}[baseline = 0]
	\draw[->,thick,darkred] (0.08,-.3) to (0.08,.4);
      \node at (0.08,0.05) {$\color{darkred}\bullet$};
   \node at (0.08,-.4) {$\scriptstyle{i}$};
\end{tikzpicture}
}
{\scriptstyle\lambda}
&\mapsto
\left\{
\begin{array}{rl}
\mathord{
\begin{tikzpicture}[baseline = 0]
	\draw[<-,thick,darkred] (0.08,-.3) to (0.08,.4);
      \node at (0.08,0.05) {$\color{darkred}\bullet$};
   \node at (-.35,.05) {$\scriptstyle{\lambda}$};
\draw[-,thin,red](.4,-.3) to (-.22,-.3);
\draw[-,thin,red](.4,.4) to (-.22,.4);
\node at (.55,-.3) {$\color{red}\scriptstyle \0$};
\node at (.55,.4) {$\color{red}\scriptstyle \0$};
\node at (-1.2,.4) {$\color{red}\scriptstyle -d_i(1+\langle h_i,\lambda\rangle)$};
\node at (-1.2,-.3) {$\color{red}\scriptstyle -d_i(1+\langle h_i,\lambda\rangle)$};
   \node at (0.08,.52) {$\scriptstyle{i}$};
\node at (.95,.45) {$\scriptstyle\operatorname{srev}$};
\end{tikzpicture}
}&\text{if $|i|=\0$,}\\
\sqrt{-1}\mathord{
\begin{tikzpicture}[baseline = 0]
	\draw[<-,thick,darkred] (0.08,-.3) to (0.08,.4);
      \node at (0.08,0.05) {$\color{darkred}\bullet$};
   \node at (-.35,.05) {$\scriptstyle{\lambda}$};
\draw[-,thin,red](.4,-.3) to (-.22,-.3);
\draw[-,thin,red](.4,.4) to (-.22,.4);
\node at (.55,-.3) {$\color{red}\scriptstyle \0$};
\node at (.55,.4) {$\color{red}\scriptstyle \0$};
\node at (-1.2,.4) {$\color{red}\scriptstyle -d_i(1+\langle h_i,\lambda\rangle)$};
\node at (-1.2,-.3) {$\color{red}\scriptstyle -d_i(1+\langle h_i,\lambda\rangle)$};
   \node at (0.08,.52) {$\scriptstyle{i}$};
\node at (.95,.45) {$\scriptstyle\operatorname{srev}$};
\end{tikzpicture}
}&\text{if $|i|=\1$},
\end{array}\right.\\
\mathord{
\begin{tikzpicture}[baseline = 0]
	\draw[->,thick,darkred] (0.28,-.3) to (-0.28,.4);
	\draw[->,thick,darkred] (-0.28,-.3) to (0.28,.4);
   \node at (-0.28,-.4) {$\scriptstyle{i}$};
   \node at (0.28,-.4) {$\scriptstyle{j}$};
   \node at (.4,.05) {$\scriptstyle{\lambda}$};
\end{tikzpicture}
}
&\mapsto
\left\{
\begin{array}{rl}
\mathord{
\begin{tikzpicture}[baseline = 0]
	\draw[<-,thick,darkred] (0.28,-.3) to [out=90,in=-90] (-0.28,.4);
	\draw[<-,thick,darkred] (-0.28,-.3) to 
[out=90,in=-90] (0.28,.4);
   \node at (-.4,.05) {$\scriptstyle{\lambda}$};
\draw[-,thin,red](.4,-.3) to (-.4,-.3);
\draw[-,thin,red](.4,.4) to (-.4,.4);
\node at (.55,-.3) {$\color{red}\scriptstyle \0$};
\node at (.55,.4) {$\color{red}\scriptstyle \0$};
\node at (-2.8,.4) {$\color{red}\scriptstyle -d_i(1+\langle
  h_i,\lambda\rangle)-d_j(1+\langle h_j,\lambda\rangle)-(\alpha_i,\alpha_j)$};
\node at (-2.8,-.3) {$\color{red}\scriptstyle -d_i(1+\langle
  h_i,\lambda\rangle)-d_j(1+\langle h_j,\lambda\rangle)-(\alpha_i,\alpha_j)$};
   \node at (-0.28,.52) {$\scriptstyle{i}$};
   \node at (0.28,.52) {$\scriptstyle{j}$};
\node at (.95,.45) {$\scriptstyle\operatorname{srev}$};
\end{tikzpicture}
}&\text{if $|i||j|=\0$},\\
-\sqrt{-1}
\mathord{
\begin{tikzpicture}[baseline = 0]
	\draw[<-,thick,darkred] (0.28,-.3) to [out=90,in=-90] (-0.28,.4);
	\draw[<-,thick,darkred] (-0.28,-.3) to 
[out=90,in=-90] (0.28,.4);
   \node at (-.4,.05) {$\scriptstyle{\lambda}$};
\draw[-,thin,red](.4,-.3) to (-.4,-.3);
\draw[-,thin,red](.4,.4) to (-.4,.4);
\node at (.55,-.3) {$\color{red}\scriptstyle \0$};
\node at (.55,.4) {$\color{red}\scriptstyle \0$};
\node at (-2.8,.4) {$\color{red}\scriptstyle -d_i(1+\langle
  h_i,\lambda\rangle)-d_j(1+\langle h_j,\lambda\rangle)-(\alpha_i,\alpha_j)$};
\node at (-2.8,-.3) {$\color{red}\scriptstyle -d_i(1+\langle
  h_i,\lambda\rangle)-d_j(1+\langle h_j,\lambda\rangle)-(\alpha_i,\alpha_j)$};
   \node at (-0.28,.52) {$\scriptstyle{i}$};
   \node at (0.28,.52) {$\scriptstyle{j}$};
\node at (.95,.45) {$\scriptstyle\operatorname{srev}$};
\end{tikzpicture}
}&\text{if $|i||j|=\1$},
\end{array}\right.
\\
\mathord{
\begin{tikzpicture}[baseline = 0]
	\draw[<-,thick,darkred] (0.4,0.3) to[out=-90, in=0] (0.1,-0.1);
	\draw[-,thick,darkred] (0.1,-0.1) to[out = 180, in = -90] (-0.2,0.3);
    \node at (-0.2,.4) {$\scriptstyle{i}$};
  \node at (0.3,-0.15) {$\scriptstyle{\lambda}$};
\end{tikzpicture}
}
&\mapsto
\mathord{
\begin{tikzpicture}[baseline = 0]
	\draw[<-,thick,darkred] (0.4,0.3) to[out=-90, in=0] (0.1,-0.1);
	\draw[-,thick,darkred] (0.1,-0.1) to[out = 180, in = -90] (-0.2,0.3);
 \node at (0.5,0) {$\scriptstyle{\lambda}$};
\draw[-,thin,red](.55,-.3) to (-.4,-.3);
\draw[-,thin,red](.55,.3) to (-.4,.3);
\node at (.7,.3) {$\color{red}\scriptstyle \0$};
\node at (.7,-.3) {$\color{red}\scriptstyle \0$};
\node at (-.55,.3) {$\color{red}\scriptstyle 0$};
\node at (-.55,-.3) {$\color{red}\scriptstyle 0$};
    \node at (-0.3,.44) {$\scriptstyle{i}$};
\node at (1.05,.45) {$\scriptstyle\operatorname{srev}$};
\end{tikzpicture}
},\qquad
\mathord{
\begin{tikzpicture}[baseline = 0]
	\draw[<-,thick,darkred] (0.4,-0.1) to[out=90, in=0] (0.1,0.3);
	\draw[-,thick,darkred] (0.1,0.3) to[out = 180, in = 90] (-0.2,-0.1);
    \node at (-0.2,-.2) {$\scriptstyle{i}$};
  \node at (0.3,0.4) {$\scriptstyle{\lambda}$};
\end{tikzpicture}
}\mapsto
\mathord{
\begin{tikzpicture}[baseline = 0]
	\draw[<-,thick,darkred] (0.4,-0.3) to[out=90, in=0] (0.1,0.1);
	\draw[-,thick,darkred] (0.1,0.1) to[out = 180, in = 90] (-0.2,-0.3);
 \node at (0.5,0.05) {$\scriptstyle{\lambda}$};
\draw[-,thin,red](.55,-.3) to (-.4,-.3);
\draw[-,thin,red](.55,.3) to (-.4,.3);
\node at (.7,-.3) {$\color{red}\scriptstyle \0$};
\node at (.7,.3) {$\color{red}\scriptstyle \0$};
\node at (-.55,.3) {$\color{red}\scriptstyle 0$};
\node at (-.55,-.3) {$\color{red}\scriptstyle 0$};
    \node at (-0.3,-.44) {$\scriptstyle{i}$};
\node at (1.05,.42) {$\scriptstyle\operatorname{srev}$};
\end{tikzpicture}
}.
\end{align*}
To prove the claim, one needs to verify the relations. The quiver
Hecke relations are the most complicated; for these, use (\ref{downcross1})--(\ref{downcross2}),
(\ref{noisy}) and (\ref{lastone}). (Note the degree shifts actually
play no role in this argument; they are included to match (\ref{rho}).)
Finally, apply Lemma~\ref{classesshouldhavestarted} to get $\tilde\rho$.
\end{proof}

Suppose in this paragraph that $\k = \k_\0$ is a field. Then 
the underlying 2-category $\underline{\UU}_{q,\pi}(\g)$ is a
$(Q,\Pi)$-2-category in the sense of \cite[Definition 6.14]{BE}, as is its
additive Karoubi envelope 
$\dot{\underline{\UU}}_{q,\pi}(\g)$. 
The Grothendieck ring $K_0(\dot{\underline{\UU}}_{q,\pi}(\g))$ 
is a locally unital
$\mathcal{L}$-algebra with distinguished idempotents
$\{1_\lambda\:|\:\lambda \in P\}$.
The analogous Grothendieck ring arising from
$\UU_{q,\pi}(\g)^{\operatorname{sop}}$ may be identified with
$K_0(\dot{\underline{\UU}}_{q,\pi}(\g))$ as a ring, but now $q$ acts
as $q^{-1}$. This means that the isomorphisms $\tilde\omega$ and
$\tilde\psi$
from Propositions~\ref{prop1}--\ref{prop2} induce {\em antilinear}
locally unital algebra automorphisms
$$
[\tilde\omega], [\tilde\psi]:
K_0(\dot{\underline{\UU}}_{q,\pi}(\g))\rightarrow
K_0(\dot{\underline{\UU}}_{q,\pi}(\g)).
$$
Also, the Grothendieck ring arising from
$\UU_{q,\pi}(\g)^{\operatorname{srev}}$ may be identified with the
opposite 
$K_0(\dot{\underline{\UU}}_{q,\pi}(\g))^{\operatorname{op}}$,
so that the isomorphism $\tilde\rho$ from Proposition~\ref{prop3}
induces a linear algebra antiautomorphism
$$
[\tilde\rho]:
K_0(\dot{\underline{\UU}}_{q,\pi}(\g))\rightarrow
K_0(\dot{\underline{\UU}}_{q,\pi}(\g)).
$$
The epimorphism $\gamma:
\dot U_{q,\pi}(\g)_{\LC} \twoheadrightarrow
K_0(\underline{\dot\UU}_{q,\pi}(\g))$ to be constructed in Theorem~\ref{bits}
below
intertwines 
the maps $\omega, \psi$ 
and $\rho$ from (\ref{omega})--(\ref{rho}) with
$[\tilde\omega], [\tilde\psi]$ and $[\tilde\rho]$.

\begin{remark}
One can also consider the compositions 
$\tilde\rho\circ \tilde \psi$ and $\tilde\psi \circ \tilde \rho$.
Both of these maps can be defined directly on generators,
revealing that they actually do not require the existence of
$\sqrt{-1} \in \k$, unlike $\tilde\rho$ and $\tilde\psi$ themselves.
Just as discussed in \cite[(3.46)--(3.47]{KL3}, these maps may also 
be interpreted as taking right
duals/mates and left duals/mates, respectively.
They decategorify to
the maps $*$ and $!$ from (\ref{marry1})--(\ref{marry2}).
\end{remark}

\section{The sesquilinear form}\label{sform}

Continue with the assumptions from section \ref{sqg}.
Let $\mathbf{f}$ be the $\mathbb{L}$-superalgebra on generators
$\{\theta_i\:|\:i \in I\}$ with $|\theta_i| := |i|$, 
subject to relations
\begin{equation}\label{serres}
\sum_{r=0}^{d_{ij}+1} (-1)^r \pi_i^{r|j|+r(r-1)/2}
\sqbinom{d_{ij}+1}{r}_{q_i,\pi_i} \theta_i^{d_{ij}+1-r} \theta_j \theta_i^r =
0
\end{equation}
for all $i \neq j$.
There is a $Q$-grading 
$\mathbf{f} = \bigoplus_{\alpha \in Q} \mathbf{f}_\alpha$
on $\mathbf{f}$ compatible with the $\Z/2$-grading
defined by declaring that each 
$\theta_i$ is of degree $\alpha_i$.
Viewing $\mathbf{f} \otimes \mathbf{f}$ as an algebra with the twisted
multiplication
$(x \otimes y) (x' \otimes y') := \pi^{|y||x'|} q^{-(\beta,\alpha')}
xx' \otimes yy'$
for homogeneous $x \in \mathbf{f}_\alpha,y\in \mathbf{f}_\beta,x' \in
\mathbf{f}_{\alpha'},y' \in \mathbf{f}_{\beta'}$, we let
$r:\mathbf{f} \rightarrow \mathbf{f} \otimes \mathbf{f}$ be the 
superalgebra homomorphism defined from 
$r(\theta_i) = \theta_i \otimes 1 + 1 \otimes \theta_i$ for each $i
\in I$.
By \cite[Proposition 3.4.1]{CHW1}, there is a (non-degenerate) symmetric
bilinear form $(-,-)$ on $\mathbf{f}$ defined by the following properties:
\begin{itemize}
\item
$(\theta_i,\theta_j) = \delta_{i,j} / (1-\pi_i q_i^2)$;
\item
$(xy,z) = (x \otimes y, r(z))$;
\item
$(x,yz)= (r(x), y \otimes z)$.
\end{itemize}
Here, the form on $\mathbf{f}\otimes\mathbf{f}$ is defined from 
$(x \otimes y, x' \otimes y') := (x,x') (y,y')$.
Note that $\mathbf{f}_\alpha$ and $\mathbf{f}_\beta$ are orthogonal
for $\alpha \neq \beta$.

\begin{theorem}[Lusztig, Clark]\label{clark}
There is a unique sesquilinear form
($=$ antilinear in the first argument, linear in the second)
$\langle-,-\rangle:\dot U_{q,\pi}(\g)\times \dot
U_{q,\pi}(\g)\rightarrow \mathbb{L}$ 
such that the following hold:
\begin{enumerate}
\item
$\langle 1_\mu x 1_\lambda, 1_{\mu'} x' 1_{\lambda'}\rangle = 0$
if $\lambda \neq \lambda'$ or $\mu \neq \mu'$;
\item
$\langle xy,z\rangle = \langle y, x^* z\rangle$;
\item 
$
\langle e_{i_d} \cdots e_{i_1} 1_\lambda, e_{j_d} \cdots e_{j_1} 1_\lambda\rangle
=
(\theta_{i_1} \cdots \theta_{i_d}, \theta_{j_1} \cdots
\theta_{j_d})$.
\end{enumerate}
Moreover:
\begin{enumerate}
\item[(4)]
$\langle x,y\rangle = \langle \psi(y), \psi(x)\rangle$;
\item[(5)]
$\langle x,yz\rangle = \langle y^!x, z\rangle$.
\end{enumerate}
Assuming in addition that the bar-consistency assumption of
\cite[Definition 2.1(d)]{Clark}
holds, i.e. 
\begin{equation}\label{barconsistency}
d_i \equiv |i|\pmod{2}\quad\text{ for each $i \in I$},
\end{equation} 
the form
$\langle-,-\rangle$ is non-degenerate.
\end{theorem}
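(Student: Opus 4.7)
The plan is to follow Lusztig's classical construction of such a form on the modified quantum group, adapted to the covering (super) quantum group setting by Clark \cite{Clark} and Clark--Hill--Wang \cite{CHW1, CHW2}. The starting point is the bilinear form $(-,-)$ on $\mathbf{f}$ described just before the theorem, which is already known to be well-defined and symmetric.

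For uniqueness, I would use the triangular decomposition of $\dot U_{q,\pi}(\g)$ to reduce any weight space to a span of monomials $f_\bj e_\bi 1_\lambda$. Property~(2), together with the identity $(f_i 1_\lambda)^* = q_i^{\langle h_i,\lambda\rangle - 1}\pi^{|i,\lambda|} 1_\lambda e_i$ from (\ref{marry1}), lets me replace each $f$-factor on the left of the pairing by an $e$-factor on the right. Repeated application, interleaved with the commutation relation (\ref{cwr1}) to maintain triangular normal form, strictly decreases the total $f$-length; the process terminates at pairings of pure $e$-monomials whose values are dictated by~(3) and the $\mathbf{f}$-form, while~(1) handles the weight decomposition.

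For existence, I would \emph{define} $\langle-,-\rangle$ via the reduction above and then verify well-definedness by checking compatibility with all the defining relations of $\dot U_{q,\pi}(\g)$. Compatibility with the quantum Serre relations (\ref{cwr2})--(\ref{cwr3}) is automatic because the $\mathbf{f}$-form already vanishes on the Serre ideal by \cite[Proposition 3.4.1]{CHW1}. Compatibility with the commutation relation (\ref{cwr1}) is the key calculation: it follows from the skew-pairing identity $(xy,z) = (x\otimes y, r(z))$ combined with the specific normalization $(\theta_i,\theta_i) = 1/(1-\pi_i q_i^2)$ of the $\mathbf{f}$-form. Property~(5) then follows from~(2) using $(y^!)^* = y$ (since $\psi$ and $\rho$ are commuting involutions with $*=\rho\psi$ and $!=\psi\rho$), and property~(4) follows from the $\psi$-equivariance of the $\mathbf{f}$-form.

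The main obstacle is non-degeneracy. Under the bar-consistency assumption (\ref{barconsistency}), the form on each graded component $\mathbf{f}_\alpha$ is non-degenerate (by \cite[Proposition 3.4.1]{CHW1} together with the analysis in \cite{Clark}). To transfer this to the weight spaces of $\dot U_{q,\pi}(\g)$, I would order the triangular basis $\{f_\bj e_\bi 1_\lambda\}$ by $f$-length and show that the resulting Gram matrix is block upper-triangular with diagonal blocks given by the $\mathbf{f}$-forms on appropriate graded pieces. The role of (\ref{barconsistency}) is subtle: it ensures that the parity signs arising when commuting $e$'s past $f$'s align correctly with the signs built into the $\mathbf{f}$-form, preventing cancellations that would otherwise produce factors of $1\pm\pi$ -- zero divisors in $\mathbb{L}$ -- along the diagonal. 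This last point is where I expect the bulk of the genuinely new work, and I would model the argument closely on the corresponding analysis in \cite{Clark}.
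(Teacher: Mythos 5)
Your route is genuinely different from the paper's: the paper does not reconstruct the form at all, but simply takes Clark's bilinear form from \cite[Proposition 5.8]{Clark} and twists it by $\sigma$ and the antilinear involution $\psi$ to produce the sesquilinear form, so that properties (1)--(4) become translations of Clark's four axioms, (5) follows formally from (2) and (4), and non-degeneracy is exactly \cite[Theorem 5.12]{Clark}. Your from-scratch Lusztig-style construction is legitimate in principle, and the uniqueness/existence portion is workable (indeed the paper carries out essentially your reduction argument later, in Claims 1--5 of the proof of Theorem~\ref{sesqui}, when it computes the form on monomials); but be aware that verifying well-definedness against (\ref{cwr1}) is precisely the content of Clark's Proposition 5.8, so you would be re-proving a published result rather than citing it.

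The genuine gap is in your non-degeneracy argument. First, $\{f_{\bj} e_{\bi} 1_\lambda\}$ is a spanning set of $1_\mu \dot U_{q,\pi}(\g) 1_\lambda$, not a basis -- the relation (\ref{cwr1}) and the Serre relations impose many linear dependencies -- so ``the Gram matrix of the triangular basis'' is not defined, and a triangularity statement on a spanning set cannot by itself yield non-degeneracy. Second, even on a genuine basis the Gram matrix is not block upper-triangular with $\mathbf{f}$-form diagonal blocks: normal-ordering $e$'s past $f$'s produces correction terms weighted by quantum integers $[\langle h_i,\lambda\rangle]_{q_i,\pi_i}$, which vanish for suitable $\lambda$, and the cross-pairings genuinely contribute (compare $\langle e_if_i1_\lambda, f_ie_i1_\lambda\rangle = (\pi_i+q_i^2)/(1-\pi_iq_i^2)^2$ in the example following Theorem~\ref{sesqui}). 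The actual proof of non-degeneracy on the idempotented form, in both \cite[26.2]{Lubook} and \cite{Clark}, proceeds by mapping $\dot U 1_\lambda$ onto tensor products of integrable highest- and lowest-weight modules $\Lambda_{\lambda_1}\otimes{}^{\omega}\Lambda_{\lambda_2}$ and passing to a limit over dominant weights; this is where bar-consistency (\ref{barconsistency}) actually enters (through the integrable-module and canonical-basis theory of the covering quantum group), not through avoiding factors of $1\pm\pi$ on a diagonal. As written, your non-degeneracy step would fail, and you should either import \cite[Theorem 5.12]{Clark} as the paper does or reproduce the integrable-module argument.
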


\begin{proof}
There is clearly at most one sesquilinear form on $\dot U_{q,\pi}(\g)$
satisfying properties (1)--(3).
To see that there is indeed such a form, we appeal to
\cite[Proposition 5.8]{Clark}, which defines a bilinear form
$(-,-)'$ on $\dot U_{q,\pi}(\g)$ satisfying four
properties. 
Our form $\langle-,-\rangle$ is obtained from Clark's form by setting
\begin{equation}
\langle x,y\rangle := (\sigma(\psi(u)),
\sigma(v))',
\end{equation}
where $\psi$ is the antilinear automorphism from (\ref{psi})
and $\sigma$ is the linear antiautomorphism defined by
declaring that $\sigma(1_\lambda) = 1_\lambda$, $\sigma(e_i 1_\lambda)
= 1_\lambda f_i$ and
$\sigma(f_i 1_\lambda) = 1_\lambda e_i$.
We leave it as an exercise to the reader to check that Clark's four
properties translate into our properties (1)--(4); actually, one needs
the opposite formulation of Clark's second property which may be
derived from
\cite[Proposition 5.3]{Clark},
noting that Clark's $\tau_1$ is our $\sigma \circ * \circ \psi\circ \sigma$.
The property (5) is immediate from (2) and (4) plus the definition of
(\ref{marry2}).
Finally, assuming bar-consistency, the non-degeneracy follows from
\cite[Theorem 5.12]{Clark}.
\end{proof}

\begin{remark}
One could also define a bilinear (rather than sesquilinear) form $(-,-)$ on $\dot
U_{q,\pi}(\g)$
by setting
$(x,y) := \langle\psi(x), y \rangle$. This is a generalization of
Lusztig's form from \cite[Theorem 26.1.2]{Lubook} which is slightly
different from the one introduced in \cite{Clark}.
Theorem~\ref{clark} implies that $(-,-)$
is symmetric and it satisfies $(xy,z) = (y,\rho(x) z)$.
\end{remark}

The next theorem gives a graphical description of the form
$\langle-,-\rangle$ in the spirit of \cite[Theorem 2.7]{KL3}.
Recall the notation $\SSeq$ from section \ref{s6}.
For $\a,\b \in \SSeq$,
let $\widehat{M}(\a,\b)$ be chosen as in
Theorem~\ref{tripleofdata}.
For $\sigma \in \widehat{M}(\a,\b)$ and $\lambda \in P$, define the
{\em degree} $\deg(\sigma,\lambda)$ and the {\em parity} $|\sigma,\lambda|$
to be the degree and parity of the homogeneous 2-morphism
$f(\sigma,\lambda)$, i.e. we sum
the degrees and parities of all of the generating dots, cups, caps and crossings 
in the diagram for $f(\sigma,\lambda)$ as listed in the following table:
$$
\begin{array}{|c|c|c||c|c|c|}
\hline
\text{Generator}&\text{Degree}&\text{Parity}&\text{Generator}&\text{Degree}&\text{Parity}\\\hline
\mathord{
\begin{tikzpicture}[baseline = 0]
	\draw[->,thick,darkred] (0.08,-.3) to (0.08,.4);
      \node at (0.08,0.05) {$\color{darkred}\bullet$};
   \node at (0.08,-.4) {$\scriptstyle{i}$};
\end{tikzpicture}
}
{\scriptstyle\lambda}
&2d_i&|i|&
\mathord{
\begin{tikzpicture}[baseline = 0]
	\draw[<-,thick,darkred] (0.08,-.3) to (0.08,.4);
      \node at (0.08,0.1) {$\color{darkred}\bullet$};
     \node at (0.08,.5) {$\scriptstyle{i}$};
\end{tikzpicture}
}
{\scriptstyle\lambda}
&2d_i&|i|\\
\mathord{
\begin{tikzpicture}[baseline = 0]
	\draw[->,thick,darkred] (0.28,-.3) to (-0.28,.4);
	\draw[->,thick,darkred] (-0.28,-.3) to (0.28,.4);
   \node at (-0.28,-.4) {$\scriptstyle{i}$};
   \node at (0.28,-.4) {$\scriptstyle{j}$};
   \node at (.4,.05) {$\scriptstyle{\lambda}$};
\end{tikzpicture}
}
&-(\alpha_i,\alpha_j)&|i||j|&
\mathord{
\begin{tikzpicture}[baseline = 0]
	\draw[<-,thick,darkred] (0.28,-.3) to (-0.28,.4);
	\draw[->,thick,darkred] (-0.28,-.3) to (0.28,.4);
   \node at (-0.28,-.4) {$\scriptstyle{i}$};
   \node at (-0.28,.5) {$\scriptstyle{j}$};
   \node at (.4,.05) {$\scriptstyle{\lambda}$};
\end{tikzpicture}
}&0&|i||j|\\
\mathord{
\begin{tikzpicture}[baseline = 0]
	\draw[<-,thick,darkred] (0.28,-.3) to (-0.28,.4);
	\draw[<-,thick,darkred] (-0.28,-.3) to (0.28,.4);
   \node at (-0.28,.5) {$\scriptstyle{i}$};
   \node at (0.28,.5) {$\scriptstyle{j}$};
   \node at (.4,.05) {$\scriptstyle{\lambda}$};
\end{tikzpicture}
}&-(\alpha_i,\alpha_j)&|i||j|&
\mathord{
\begin{tikzpicture}[baseline = 0]
	\draw[->,thick,darkred] (0.28,-.3) to (-0.28,.4);
	\draw[<-,thick,darkred] (-0.28,-.3) to (0.28,.4);
   \node at (0.28,-.4) {$\scriptstyle{i}$};
   \node at (0.28,.5) {$\scriptstyle{j}$};
   \node at (.4,.05) {$\scriptstyle{\lambda}$};
\end{tikzpicture}
}&0&|i||j|\\
\mathord{
\begin{tikzpicture}[baseline = 0]
	\draw[<-,thick,darkred] (0.4,0.3) to[out=-90, in=0] (0.1,-0.1);
	\draw[-,thick,darkred] (0.1,-0.1) to[out = 180, in = -90] (-0.2,0.3);
    \node at (-0.2,.4) {$\scriptstyle{i}$};
  \node at (0.3,-0.15) {$\scriptstyle{\lambda}$};
\end{tikzpicture}
}
&d_i(1+\langle h_i,\lambda\rangle)&\0&
\mathord{
\begin{tikzpicture}[baseline = 0]
	\draw[-,thick,darkred] (0.4,0.3) to[out=-90, in=0] (0.1,-0.1);
	\draw[->,thick,darkred] (0.1,-0.1) to[out = 180, in = -90] (-0.2,0.3);
    \node at (0.4,.4) {$\scriptstyle{i}$};
  \node at (0.3,-0.15) {$\scriptstyle{\lambda}$};
\end{tikzpicture}
}
&d_i(1-\langle h_i,\lambda\rangle)&|i,\lambda|\\
\mathord{
\begin{tikzpicture}[baseline = 0]
	\draw[<-,thick,darkred] (0.4,-0.1) to[out=90, in=0] (0.1,0.3);
	\draw[-,thick,darkred] (0.1,0.3) to[out = 180, in = 90] (-0.2,-0.1);
    \node at (-0.2,-.2) {$\scriptstyle{i}$};
  \node at (0.3,0.4) {$\scriptstyle{\lambda}$};
\end{tikzpicture}
}&d_i(1-\langle h_i,\lambda\rangle)&\0&
\mathord{
\begin{tikzpicture}[baseline = 0]
	\draw[-,thick,darkred] (0.4,-0.1) to[out=90, in=0] (0.1,0.3);
	\draw[->,thick,darkred] (0.1,0.3) to[out = 180, in = 90] (-0.2,-0.1);
    \node at (0.4,-.2) {$\scriptstyle{i}$};
  \node at (0.3,0.4) {$\scriptstyle{\lambda}$};
\end{tikzpicture}
}&d_i(1+\langle h_i,\lambda\rangle)&|i,\lambda|\\
\hline
\end{array}
$$
Just as we did in (\ref{safetypin}), a word $\a = \a_m\cdots\a_1 \in
\SSeq$ defines a monomial
\begin{equation}
e_\a 1_\lambda
:= e_{\a_m} \cdots e_{\a_1} 1_\lambda \in \dot U_{q,\pi}(\g),
\end{equation}
where 
$e_{{\color{darkred}{\uparrow}}_i}:=e_i$
and $e_{{\color{darkred}{\downarrow}}_i}:=f_i$.
Clearly, these monomials taken over all $\a \in \SSeq$ and all $\lambda
\in P$
span $\dot U_{q,\pi}(\g)$.

\begin{theorem}\label{sesqui}
The sesquilinear form $\langle-,-\rangle$ from Theorem~\ref{clark}
satisfies 
\begin{equation}\label{bagels}
\langle e_{\a} 1_\lambda, e_{\b} 1_\mu \rangle
= \delta_{\lambda,\mu} 
\sum_{\sigma \in \widehat{M}(\a,\b)}
q^{\deg(\sigma,\lambda)} \pi^{|\sigma,\lambda|} 
\end{equation}
for each $\a,\b \in \SSeq$ and $\lambda,\mu
\in P$.
\end{theorem}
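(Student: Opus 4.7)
The plan is to invoke the uniqueness in Theorem~\ref{clark}: a sesquilinear form on $\dot U_{q,\pi}(\g)$ is determined by the three properties (1)--(3) of that theorem. So I would define a candidate form $\langle-,-\rangle'$ by declaring $\langle e_\a 1_\lambda, e_\b 1_\mu\rangle'$ to be the right-hand side of (\ref{bagels}) and extending $\Bbbk$-sesquilinearly. The theorem will follow once I verify that $\langle-,-\rangle'$ is well-defined on $\dot U_{q,\pi}(\g)$ and satisfies Theorem~\ref{clark}(1)--(3).

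Property (1) is immediate from the definition of $\widehat{M}(\a,\b)$, since at least one $\a\b$-matching exists only when $\wt(\a)=\wt(\b)$. For property (3), one restricts to words consisting only of upward arrows; then every strand in a reduced matching runs from bottom to top, so $\widehat{M}(\a,\b)$ is essentially the set of $I$-colored permutations of $\a$ into $\b$, decorated by dots on each strand. The resulting generating function of $q$- and $\pi$-degrees matches the known formula for $(\theta_{i_1}\cdots\theta_{i_d},\theta_{j_1}\cdots\theta_{j_d})$ coming from the basis theorem for the quiver Hecke superalgebras of Kang--Kashiwara--Tsuchioka~\cite{KKT}; I would cite this basis theorem and read off the combinatorial identity from the definition of $(-,-)$ on $\mathbf{f}$.

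The substantive step is property (2): $\langle xy,z\rangle' = \langle y,x^*z\rangle'$. It suffices to take $x=e_i 1_\lambda$ or $x=f_i 1_\lambda$, which corresponds to stripping off the leftmost letter $\a_m$ of $\a$. I would exhibit an explicit bijection
\[
\widehat{M}(\a,\b) \;\xrightarrow{\;\sim\;}\; \widehat{M}(\a',\,\a_m^{\ast}\b)
\]
obtained geometrically by bending the strand attached to the leftmost bottom vertex of $\a$ around to attach to a new rightmost top vertex of $\b$, where $\a_m^\ast$ denotes the letter with reversed arrow. One then checks that, summed over $\sigma$, the difference of the $q$- and $\pi$-weights on the two sides is exactly the scalar $q_i^{\mp(\langle h_i,\mu\rangle+1)}\pi^{(\cdots)}$ that appears when $(e_i 1_\mu)^\ast$ or $(f_i 1_\mu)^\ast$ is computed via (\ref{marry1}). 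This rests on a strand-by-strand degree calculation using the table preceding the theorem: each time the bent strand changes from upward-to-upward into a cap or cup (or vice versa), the degree formula changes in a controlled way, and each time it passes another strand there is a crossing contribution. Finally, well-definedness of $\langle-,-\rangle'$ on $\dot U_{q,\pi}(\g)$ (i.e. compatibility with the Serre relations (\ref{cwr2})--(\ref{cwr3})) reduces via property~(2) to evaluating (\ref{bagels}) when $\b=\emptyset$ and $\a$ is a Serre monomial; this is a localized combinatorial identity again equivalent to a relation known to hold in $\mathbf{f}$ from~\cite{KKT}.

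The main obstacle will be the combinatorial verification of property~(2). The degree/parity bookkeeping is intricate because bending a strand past a critical point toggles it between the different rows of the degree table, and bending it across intersecting strands introduces signs governed by the super interchange law. Concretely, I expect the cleanest route is to encode both sides as Laurent series in $q$ indexed by the strand skeleton (equivalence class of matching) and then to verify the bijection on each fixed skeleton. Once that identity is established, the quantum integer $[\langle h_i,\lambda\rangle]_{q_i,\pi_i}$ appearing in (\ref{cwr1}) emerges on the combinatorial side exactly as the generating function of dot decorations on the single cap produced when a strand gets fully ``closed off''—this is the step that connects the algebraic commutator relation to the combinatorial formula and was the analogous hard point in~\cite[Theorem~2.7]{KL3}.
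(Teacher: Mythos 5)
Your combinatorial core---the cup-attaching/bending bijections for the adjunction property, the crossing bijection for $e_if_j\leftrightarrow f_je_i$ with $i\neq j$, and the dotted-cap generating function producing $[\langle h_i,\mu\rangle]_{q_i,\pi_i}$ in the $i=j$ case---is exactly the content of the paper's Claims 1--5, which follow \cite[Lemmas 2.8--2.12]{KL3}. The problem is the logical frame you wrap around it. You propose to \emph{define} a pairing $\langle-,-\rangle'$ by the right-hand side of (\ref{bagels}) and then invoke the uniqueness clause of Theorem~\ref{clark}. Since the monomials $e_\a 1_\lambda$ merely span $\dot U_{q,\pi}(\g)$ and are far from linearly independent, this requires proving that the combinatorial sums are compatible with \emph{every} relation of $\dot U_{q,\pi}(\g)$---not only Serre monomials paired against $\varnothing$, but (\ref{cwr1})--(\ref{cwr3}) inserted at arbitrary positions in either argument. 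Your proposed reduction of this well-definedness ``via property (2)'' is circular: property (2) is a statement about the pairing as a form on the quotient algebra, so it is not available before well-definedness has been established.

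The gap disappears if you drop the construction of a new form entirely. Theorem~\ref{clark} already supplies the form, and properties (1)--(3) determine its values on monomials by a recursion (strip letters using (2) and (\ref{marry1}), reorder $e_if_j$ past $f_je_i$ using (\ref{cwr1}), and terminate with (3) on positive words); it therefore suffices to check that the combinatorial sums $\langle\a,\b\rangle_\lambda$ satisfy the \emph{same} recursion. That is precisely the paper's proof, and all of your bijections slot into it unchanged, with no well-definedness issue to address. Two smaller corrections: when you strip the leftmost letter $\a_m$ and apply $\langle xy,z\rangle=\langle y,x^*z\rangle$, the reversed letter is adjoined on the \emph{left} of $\b$ (the new vertex is the leftmost top vertex, i.e.\ the cup is attached at the bottom left in the paper's conventions), not the rightmost as you wrote; getting this wrong scrambles the crossing count in your degree bookkeeping. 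And for the positive-word base case the paper computes $(\theta_{i_1}\cdots\theta_{i_d},\theta_{j_1}\cdots\theta_{j_d})$ directly from the twisted coproduct $r$ defining the form on $\mathbf{f}$ as an explicit sum over permutations, which is shorter and does not route through the quiver Hecke superalgebra basis theorem you invoke.
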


\begin{proof}
This argument parallels the proof of \cite[Theorem 2.7]{KL3} closely.
We can clearly assume $\mu=\lambda$.
Let $\langle \a,\b\rangle_\lambda$ 
denote the expression on the right hand side of (\ref{bagels}). Note to start with that $\langle\a,\b\rangle_\lambda$
does not depend on the particular choice made for
$\widehat{M}(\a,\b)$. This follows because one can pass between any
two choices of decorated reduced matchings by a sequence of isotopies
which do not change degrees or
parities of diagrams. (This is similar to the proof of
Theorem~\ref{tripleofdata}, which applied more complicated relations which are the
same as these isotopies plus terms with fewer crossings.)
To complete the proof of the theorem, we must show:
\begin{equation}\label{justin}
\langle e_{\a} 1_\lambda, e_{\b} 1_\lambda \rangle
= \langle \a,\b\rangle_\lambda.
\end{equation}
We proceed with a series of claims, which mimic \cite[Lemmas
2.8--2.12]{KL3}.

\vspace{1mm}

\vspace{1mm}
\noindent
{\em \underline{Claim 1}.}
The identity (\ref{justin}) is true in case $\a$ and $\b$ are
positive, i.e. they only involve upward arrows. 

\vspace{1mm}
\noindent
To see this, if
$\a = {\color{darkred}\uparrow}_{i_c} \cdots {\color{darkred}\uparrow}_{i_1}$ and
$\b = {\color{darkred}\uparrow}_{j_d} \cdots {\color{darkred}\uparrow}_{j_1}$, then
$M(\a,\b)$ is empty unless $c=d$, in which case its elements are in
bijection with permutations $w \in S_d$
such that $i_{w(r)} = j_r$ for each $r=1,\dots,d$, and we have that
$$
\langle \a,\b\rangle_\lambda
=
\delta_{c,d}\sum_{w \in S_d}
\bigg(\prod_{r=1}^d\frac{ \delta_{i_{w(r)}, j_{r}} }{1-\pi_{i_r} q_{i_r}^2}\bigg)\bigg(
\prod_{\substack{1 \leq r < s \leq d\\w(r) > w(s)}}
\!\!\!\pi^{|i_r||i_s|} q^{-(\alpha_{i_r},\alpha_{i_s})}\bigg).
$$
Using Theorem~\ref{clark}(iv), it remains to check that this
equals
$(\theta_{i_1}\cdots \theta_{i_c}, \theta_{j_1},\dots,\theta_{j_d})$.
This follows by the explicit definition of the latter form on $\mathbf{f}$.

\vspace{1mm}

\noindent
{\em \underline{Claim 2}.} $\langle e_i e_{\a} 1_\lambda, e_{\b} 1_\lambda \rangle = \langle
{\color{darkred}\uparrow}_i \a, \b\rangle_\lambda\Leftrightarrow
\langle e_{\a} 1_\lambda, f_i e_{\b} 1_\lambda \rangle = \langle
\a, {\color{darkred}\downarrow}_i \b \rangle_\lambda$.

\noindent
{\em \underline{Claim 3}.}
$\langle f_i e_{\a} 1_\lambda, e_{\b} 1_\lambda \rangle = \langle
{\color{darkred}\downarrow}_i\a, \b \rangle_\lambda\Leftrightarrow
\langle e_{\a} 1_\lambda, e_i e_{\b} 1_\lambda \rangle = \langle
\a, {\color{darkred}\uparrow}_i\b \rangle_\lambda$.

\vspace{1mm}
\noindent
The proofs of these are the same as for \cite[Lemma 2.9]{KL3}.
For example, for Claim 2, one considers the bijection between
$\widehat{M}({\color{darkred}\uparrow}_i\a, \b)$
and $\widehat{M}(\a,{\color{darkred}\downarrow}_i\b)$ 
obtained by attaching a cup on the bottom left.
On the algebraic side, one uses Theorem~\ref{clark}(2) and (\ref{marry1}).

\vspace{1mm}

\noindent
{\em {\underline{Claim 4}}}.
$\langle e_{\a} e_i f_j e_\b 1_\lambda, 1_\lambda\rangle =
\langle \a {\color{darkred}\uparrow}_i {\color{darkred}\downarrow}_j \b, \varnothing\rangle_\lambda
\Leftrightarrow
\langle e_{\a} f_j e_i e_\b 1_\lambda, 1_\lambda\rangle =
\langle \a {\color{darkred}\downarrow}_j {\color{darkred}\uparrow}_i \b, \varnothing\rangle_\lambda$,
assuming $i \neq j$.

\vspace{1mm}
\noindent
Since 
$\langle e_{\a} e_i f_j e_\b 1_\lambda, 1_\lambda\rangle
=
\pi^{|i||j|}\langle e_{\a} f_j e_i e_\b 1_\lambda, 1_\lambda\rangle$
by (\ref{cwr1}), we must show that
$$
\langle \a {\color{darkred}\uparrow}_i {\color{darkred}\downarrow}_j \b, \varnothing\rangle_\lambda
= \pi^{|i||j|}
\langle \a {\color{darkred}\downarrow}_j {\color{darkred}\uparrow}_i \b, \varnothing\rangle_\lambda.
$$
This follows by considering the bijection between
$\widehat{M}(\a {\color{darkred}\uparrow}_i {\color{darkred}\downarrow}_j \b, \varnothing)$ and $\widehat{M}( \a {\color{darkred}\downarrow}_j {\color{darkred}\uparrow}_i \b, \varnothing)$ obtained 
attaching a rightward crossing under the
${\color{darkred}\uparrow}_i {\color{darkred}\downarrow}_j$ to convert it to ${\color{darkred}\downarrow}_j
{\color{darkred}\uparrow}_i$; see the proof of \cite[Lemma 2.11]{KL3} for further
explanations. The only difference for us is that the crossing is odd in case $|i||j| = \1$.

\vspace{1mm}
\noindent
{\em {\underline{Claim 5}}}.
Assuming that $\langle e_\a e_\b 1_\lambda, 1_\lambda \rangle = \langle \a\b,\varnothing\rangle_\lambda$,
we have that 
$\langle e_{\a} e_i f_i e_\b 1_\lambda, 1_\lambda\rangle =
\langle \a {\color{darkred}\uparrow}_i {\color{darkred}\downarrow}_i \b, \varnothing\rangle_\lambda
\Leftrightarrow
\langle e_{\a} f_i e_i e_\b 1_\lambda, 1_\lambda\rangle =
\langle \a {\color{darkred}\downarrow}_i {\color{darkred}\uparrow}_i \b, \varnothing\rangle_\lambda$.

\vspace{1mm}
\noindent
Define $\mu$ so that $e_\b 1_\lambda = 1_\mu e_\b$.
In view of (\ref{cwr1}), we must show that
\begin{equation}\label{classes}
\langle \a {\color{darkred}\uparrow}_i {\color{darkred}\downarrow}_i \b, \varnothing\rangle_\lambda
- \pi^{|i|}
\langle \a {\color{darkred}\downarrow}_i {\color{darkred}\uparrow}_i \b, \varnothing\rangle_\lambda
= [\langle h_i,\mu \rangle]_{q_i,\pi_i} \langle
\a\b,\varnothing\rangle_\lambda.
\end{equation}
To see this, we divide the 
decorated matchings in $\widehat{M}(\a {\color{darkred}\uparrow}_i {\color{darkred}\downarrow}_i \b, \varnothing)$ and
$\widehat{M}(\a {\color{darkred}\downarrow}_i {\color{darkred}\uparrow}_i \b, \varnothing)$ into three classes exactly as explained in the proof
of \cite[Lemma 2.12]{KL3}. It is then easy to see that the
contributions to the left hand side of (\ref{classes}) from the first two
classes cancel. The third classes arise from decorated matchings in
$\widehat{M}(\a\b)$
by inserting a cap (clockwise or counterclockwise in the two cases)
between $\a$ and $\b$.
Hence, like in the proof of \cite[Lemma 2.12]{KL3} remembering also the sesquilinearity of $\langle-,-\rangle$, we see that the left hand side of
(\ref{classes}) expands to
$$
\overline{\left[q_i^{1-\langle h_i,\mu\rangle} / (1-\pi_i q_i^2) - \pi_i \pi^{|i,\mu|}
    q_i^{1+\langle h_i,\mu\rangle} / (1-\pi_i q_i^2)\right]}
\langle \a\b,\varnothing\rangle_\lambda.
$$
This simplifies to the right hand side of (\ref{classes}).

\vspace{2mm}
Now we can complete the proof of (\ref{justin}) in general.
Using Claims 2 and 3, we reduce to checking (\ref{justin}) in the
special case that $\b = \varnothing$. Under this assumption, we then
proceed by induction on the length of $\a$. Using Claims 4 and 5 plus
the induction hypothesis, we can rearrange $\a$ to assume that all
$\color{darkred}\downarrow$'s appear to the left of all $\color{darkred}\uparrow$'s. Then we use
Claims 3 and 1 to finish the proof.
\end{proof}

\begin{example}[{cf. \cite[Example 5.7]{Clark}}]
\begin{align*}
\langle e_i^{(r)} 1_\lambda, e_i^{(r)} 1_\lambda \rangle =
\langle f_i^{(r)}
1_\lambda, f_i^{(r)} 1_\lambda \rangle &=\prod_{s=1}^r
\frac{1}{1-(\pi_i q_i^2)^s},\\
\langle e_i f_i 1_\lambda, 1_\lambda \rangle 
= \pi^{|i,\lambda|}\langle 1_\lambda, e_i f_i 1_\lambda \rangle &=
\frac{q_i^{1-\langle h_i,\lambda\rangle}}{1-\pi_i q_i^2},\\
\langle e_i f_i 1_\lambda, f_i e_i 1_\lambda \rangle
=
\langle f_i e_i 1_\lambda, e_i f_i 1_\lambda \rangle&=
\frac{\pi_i+q_i^2}{(1-\pi_i q_i^2)^2}.
\end{align*}
\end{example}

\section{Surjectivity of $\gamma$}\label{sonto}

In this section, we continue with the assumptions of ~\S\ref{sqg},
and also assume that $\k = \k_\0$ is a field.
For a graded superalgebra $A$, we write $A\GSMod$ for the Abelian category of
graded left $A$-supermodules with morphisms that preserve degree and
parity.
Let $Q$ and $\Pi$ denote the grading and parity shift functors on $A\GSMod$, so that
$(Q V)_n = V_{n-1}$ and $(\Pi V)_a = V_{a+\1}$.
Let $A\GSProj$ be the full subcategory of $A\GSMod$ consisting of the
finitely generated projective supermodules. 
Let $K_0(A)$ denote the
split Grothendieck group of 
$A\GSProj$. It is naturally an $\LC$-module with $q$ and $\pi$ acting
by $[Q]$ and $[\Pi]$, respectively.
For a detailed discussion of the following basic facts, we refer
the reader to
\cite[$\S\S$3.8.1--3.8.2]{KL3}, all of which is
easily extended to the case of supermodules.
\begin{itemize}
\item
Assume the graded superalgebra $A$ is {\em Laurentian}, i.e. its graded pieces
are finite-dimensional and are zero in sufficiently negative degree.
Then, the Krull-Schmidt property holds in $A\GSProj$. Moreover,
$K_0(A)$ is free as an $\LC$-module, with basis as a free $\Z$-module
given by
the isomorphism classes of indecomposable projectives in $A\GSProj$.
\item
If $\alpha:A \rightarrow B$ is a homomorphism of graded superalgebras,
there is an induced $\LC$-module homomorphism
$[\alpha]:K_0(A) \rightarrow K_0(B)$. 
If $A$ and $B$ are
finite-dimensional and $\alpha$ is surjective, then $[\alpha]$ is
surjective.
\item 
Assume $A$ is Laurentian,
and let
$I$ be a two-sided
homogeneous ideal that is non-zero only in strictly positive degree.
Then, 
the canonical quotient map $A
\twoheadrightarrow A / I$ induces an isomorphism $K_0(A)
\stackrel{\sim}{\rightarrow} K_0(A / I)$.
\item
If $A$ and $B$ are finite-dimensional graded superalgebras
all of whose irreducible graded 
supermodules are absolutely irreducible of type $\mathtt{M}$,
then 
there
is an isomorphism
$K_0(A) \otimes_{\LC} K_0(B) \stackrel{\sim}{\rightarrow} K_0(A \otimes
B),
[P] \otimes [Q] \mapsto [P \otimes Q]$.
\end{itemize}
For more background about $K_0$ for supercategories, see \cite[$\S$1.5]{BE}.

We also need to review some basic facts about quiver Hecke
superalgebras established in \cite{KL1, KL2} in the even case, and in
\cite{HW} in general. Note in \cite{HW} that
the additional assumption (\ref{barconsistency}) of bar-consistency is made
throughout, but it is not needed for the proofs of the particular
results from \cite{HW} cited below.

The {\em quiver Hecke supercategory} $\H$ is the (strict) monoidal
supercategory generated by objects $I$ and morphisms
$\mathord{
\begin{tikzpicture}[baseline = -2]
	\draw[->,thick,darkred] (0.08,-.15) to (0.08,.3);
      \node at (0.08,0.05) {$\color{darkred}\bullet$};
   \node at (0.08,-.25) {$\scriptstyle{i}$};
\end{tikzpicture}
}:i \rightarrow i$ and
$\mathord{
\begin{tikzpicture}[baseline = -2]
	\draw[->,thick,darkred] (0.18,-.15) to (-0.18,.3);
	\draw[->,thick,darkred] (-0.18,-.15) to (0.18,.3);
   \node at (-0.18,-.25) {$\scriptstyle{i}$};
   \node at (0.18,-.25) {$\scriptstyle{j}$};
\end{tikzpicture}
}:i \otimes j \rightarrow j \otimes i$
of parities $|i|$ and $|i||j|$, respectively, 
subject to the relations (\ref{now})--(\ref{qhalast}) (omitting the label
$\lambda$ from these diagrams).
For objects $\bi = i_n \otimes\cdots \otimes i_1 \in I^{\otimes n}$ and $\bj = j_m \otimes\cdots
\otimes j_1 \in I^{\otimes m}$, there are no non-zero morphisms
$\bi \rightarrow \bj$ in $\H$ 
unless $m=n$.
The graded endomorphism superalgebra
\begin{equation}
H_n := \bigoplus_{\bi, \bj \in I^{\otimes n}} \Hom_{\H}(\bi, \bj)
\end{equation}
is the {\em quiver Hecke superalgebra} from \cite{KKT}.
Let $\H_{q,\pi}$ be the $(Q,\Pi)$-envelope of the monoidal
supercategory $\H$, which is defined like in Definition~\ref{ernie}
remembering that monoidal supercategories are 2-supercategories with
one object; see also \cite[Definition
1.16]{BE}.
Let $\underline{\H}_{q,\pi}$ be the underlying monoidal category (same objects,
even morphisms of degree zero).
The idempotent completion 
of the additive envelope of $\underline{\H}_{q,\pi}$ is denoted 
$\dot{\underline{\H}}_{q,\pi}$ as usual.
It is equivalent to the category $\bigoplus_{n \geq 0} H_n\GSProj$,
hence,
we may identify
\begin{equation}\label{crap}
K_0(\dot{\underline{\H}}_{q,\pi}) =\bigoplus_{n \geq 0} K_0(H_n).
\end{equation}
In particular, this means that the $\LC$-module on the right hand side
of (\ref{crap})
is actually an $\LC$-algebra; its multiplication comes from the usual
induction product $-\circ-$ on graded $H_n$-supermodules.

Fix $i \in I$ and consider the idempotent $1_{i^n} := 1_{i\otimes
  i\otimes \cdots \otimes i}
\in H_n$. The graded subalgebra $1_{i^n} H_n 1_{i^n}$ is a copy of the {\em nil-Hecke algebra} in case $|i| = \0$, or the
{\em odd nil-Hecke algebra} in case $|i|=\1$.
In either case, we write simply $X_r$ for the dot on the $r$th strand
and $T_r$ for the crossing of the $r$th and $(r+1)$th strands
(numbering strands by $1,\dots,n$ from right to left).
The elements $D_r := - T_r X_r$ from \cite[(5.20)]{HW} are
homogeneous idempotents
which 
satisfy the braid relations of the
symmetric group $S_n$. Hence, for each $w \in S_n$ there is an element
$D_w$ defined as usual from a reduced expression for $w$.
Letting $w_0$ be the longest element of $S_n$, we define
\begin{equation}
1_{i^{(n)}} := D_{w_0} \in 1_{i^n} H_n 1_{i^n}.
\end{equation}
This is known to be a primitive homogeneous idempotent, hence, 
\begin{equation}\label{crappymovie}
P(i^{(n)}) := Q^{-d_i n(n-1)/2}\:H_n 1_{i^{(n)}}
\end{equation}
is an indecomposable projective graded $H_n$-supermodule.

\begin{lemma}\label{necks}
There is a graded supermodule isomorphism
$H_n 1_{i^n} \cong P(i^{(n)})^{\oplus [n]^!_{q_i, \pi_i}}$
(meaning the obvious direct sum of copies of $
P(i^{(n)})$ with parity and degree shifts matching the expansion of
$[n]^!_{q_i,\pi_i}$).
\end{lemma}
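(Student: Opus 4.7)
The plan is to produce an explicit idempotent decomposition of $1_{i^n}$ inside the (odd) nilHecke subalgebra $NH_n^{|i|} := 1_{i^n} H_n 1_{i^n}$, and then pass to $H_n 1_{i^n}$ by left multiplication. Recall that $NH_n^{\0}$ is the classical nilHecke algebra of Khovanov--Lauda, while $NH_n^{\1}$ is the odd nilHecke algebra of Ellis--Khovanov--Lauda; in both cases the $D_r = -T_r X_r$ satisfy the braid relations, so $D_w$ is well defined for each $w \in S_n$ and in particular $D_{w_0} = 1_{i^{(n)}}$ is a primitive homogeneous idempotent with $\deg(D_{w_0}) = -d_i n(n-1)$ and $|D_{w_0}| = |i|\,\ell(w_0)$.

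First, I would expand the quantum factorial as
\begin{equation*}
[n]!_{q_i,\pi_i} \;=\; \sum_{w \in S_n} \pi_i^{\ell(w)}\, q_i^{2\ell(w) - n(n-1)/2},
\end{equation*}
and correspondingly construct, for each $w \in S_n$, a homogeneous element $z_w \in NH_n^{|i|}$ of degree $d_i\bigl(2\ell(w) - n(n-1)\bigr)$ and parity $|i|\ell(w)$, chosen so that left multiplication by $D_{w_0} z_w$ is a primitive idempotent equivalent to $D_{w_0}$. The natural candidates are (super)analogues of the Schubert polynomials: $z_w = X_1^{a_1(w)} \cdots X_n^{a_n(w)}$ for an appropriate sequence, shifted by $T_w$ or by a staircase monomial so that the ``$\sum_w z_w D_{w_0} = 1_{i^n}$'' type identity holds. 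Concretely, I would show
\begin{equation*}
1_{i^n} \;=\; \sum_{w \in S_n} \varepsilon_w\, y_w \, D_{w_0} \, z_w
\end{equation*}
for signs $\varepsilon_w \in \{\pm 1\}$ and homogeneous $y_w, z_w$ whose combined degrees/parities realize the $w$th summand in $[n]!_{q_i, \pi_i}$. In the even case this is the classical Demazure-operator calculation underlying \cite[Proposition 2.17]{KL3}; in the odd case the corresponding identity is established by Ellis--Khovanov--Lauda in \cite{EKL} for the odd nilHecke algebra, and more generally in the quiver Hecke superalgebra framework of Kang--Kashiwara--Tsuchioka and Hill--Wang.

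Second, once the identity above is in hand, multiplying on the left by $H_n$ yields
\begin{equation*}
H_n 1_{i^n} \;=\; \bigoplus_{w \in S_n} H_n \cdot D_{w_0} z_w,
\end{equation*}
where the summands are mutually orthogonal because the $y_w D_{w_0} z_w$ are orthogonal idempotents (up to sign) in $NH_n^{|i|}$, as can be verified by a Peirce-type argument using $D_{w_0}^2 = D_{w_0}$ and the fact that $D_{w_0} z_w y_{w'} D_{w_0} = \delta_{w,w'} D_{w_0}$ modulo a central unit. Each summand $H_n D_{w_0} z_w$ is then isomorphic as a graded $H_n$-supermodule to $\Pi^{|i|\ell(w)} Q^{2 d_i \ell(w) - d_i n(n-1)} \cdot H_n D_{w_0}$, which by the definition \eqref{crappymovie} of $P(i^{(n)})$ is exactly the $w$th parity-and-degree shift of $P(i^{(n)})$ appearing in the expansion of $[n]!_{q_i,\pi_i}$. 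Summing over $w$ gives the claimed isomorphism.

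The main technical obstacle is the first step in the case $|i| = \1$: the signs $\varepsilon_w$ and the parity shifts must be tracked precisely so that they assemble into $\pi_i^{\ell(w)}$ rather than, say, $\pi_i^{\ell(w)(\ell(w)-1)/2}$. This is delicate because the odd nilHecke relations (odd commutation of dots past crossings, sign anti-commutation of adjacent dots) interact nontrivially with the super interchange law when multiple monomials are stacked. Fortunately all the required sign bookkeeping has already been carried out in \cite{EKL} and, in the full generality needed here, in \cite{HW}, so my plan is to cite or specialize the relevant computation there rather than redo it from scratch. Once that input is in hand the remaining argument is a routine Peirce decomposition.
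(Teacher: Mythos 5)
Your proposal is correct and is essentially the paper's argument: the paper proves this lemma by simply citing the well-known even case and the remark after (5.28) of Hill--Wang, and the Peirce decomposition of $1_{i^n}$ into $n!$ primitive idempotents conjugate (via homogeneous units) to $1_{i^{(n)}}$ that you spell out is precisely the content of that citation, with the delicate odd-case sign bookkeeping delegated to the same sources. One slip worth correcting: since $D_r = -T_rX_r$ is homogeneous of degree $0$ and even parity (degree $-2d_i+2d_i$, parity $|i|+|i|$), the idempotent $D_{w_0}$ is necessarily even of degree zero rather than of degree $-d_in(n-1)$ and parity $|i|\ell(w_0)$ as you state --- all of the shifts appearing in the expansion of $[n]^!_{q_i,\pi_i}$ must come from the conjugating elements $y_w, z_w$, not from $D_{w_0}$ itself.
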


\begin{proof}
This is well known in the even case, and is noted after
\cite[(5.28)]{HW} in the odd case.
A different convention for $(q,\pi)$-integers is adopted in \cite{HW},
which we have taken into account by changing the parity shift in
(\ref{crappymovie}) 
compared to \cite[(5.28)]{HW}.
\end{proof}

Next suppose that we are given two different elements $i,j \in I$.
For $r, s \geq 0$, the tensor product in $\H$ 
gives a superalgebra embedding
$H_r \otimes H_1 \otimes H_s \hookrightarrow H_{r+s+1}$.
Let $1_{i^{(r)} j i^{(s)}}$ denote the image of $1_{i^{(r)}} \otimes
1_j \otimes 1_{i^{(s)}}$ under this map, then set
\begin{equation}
P(i^{(r)} j i^{(s)}) := Q^{-d_i r(r-1)/2-d_i s(s-1)/2} \: H_{r+s+1} 1_{i^{(r)} j i^{(s)}}.
\end{equation}
In other words, $P(i^{(r)} j i^{(s)}) = P(i^{(r)}) \circ P(j) \circ P(i^{(s)})$.
This is a graded projective $H_{r+s+1}$-supermodule.

\begin{proposition}[Khovanov-Lauda, Rouquier, Hill-Wang]\label{qserre}
For $i \neq j \in I$, let $n := d_{ij}+1$. Then there exists a split
exact sequence of graded $H_{r+s+1}$-supermodules
\begin{multline*}
0 
\longrightarrow P(i^{(n)} j)  \longrightarrow
\cdots \longrightarrow
\Pi^{\frac{r(r-1)}{2}|i| + r |i||j|} P(i^{(n-r)} j i^{(r)})
\longrightarrow\cdots\\
\longrightarrow \Pi^{\frac{n(n-1)}{2}|i| + n |i||j|} P(j i^{(n)}) \longrightarrow 0.
\end{multline*}
In particular, there is an isomorphism
$$
\bigoplus_{k=0}^{\lfloor \frac{n+1}{2}\rfloor}
\Pi^{k|i|} P(i^{(n-2k)} j i^{(2k)})
\cong
\bigoplus_{k=0}^{\lfloor \frac{n}{2}\rfloor}
\Pi^{k|i| + |i||j|} P(i^{(n-2k-1)} j i^{(2k+1)}).
$$
\end{proposition}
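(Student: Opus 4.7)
The plan is to construct an explicit chain complex of graded projective $H_{n+1}$-supermodules with the indicated terms and show it is split exact; the direct-sum decomposition in the second half of the statement then follows immediately by grouping terms of even and odd homological degree. In the purely even case this complex was constructed by Khovanov-Lauda and Rouquier, and the generalization to odd vertices is essentially due to Hill-Wang; the remaining task is to verify that the parity and degree conventions in our framework reproduce theirs.

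First, I would define the differentials
$$d_r: \Pi^{\frac{r(r-1)}{2}|i| + r|i||j|} P(i^{(n-r)} j i^{(r)}) \longrightarrow \Pi^{\frac{(r+1)r}{2}|i| + (r+1)|i||j|} P(i^{(n-r-1)} j i^{(r+1)})$$
as right multiplication by the image in $1_{i^{(n-r-1)} j i^{(r+1)}} H_{n+1} 1_{i^{(n-r)} j i^{(r)}}$ of a single $ij$-crossing sandwiched between the two divided-power idempotents. The parity and grading shifts recorded in the Proposition are precisely those needed to make each $d_r$ even of degree zero, once one accounts for the fact that a single $ij$-crossing has degree $d_id_{ij}$ and parity $|i||j|$, together with the intrinsic shifts in the divided-power projectives $P(i^{(k)})$ coming from Lemma~\ref{necks}.

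Next I would verify $d_{r+1}\circ d_r = 0$. This reduces to a local computation inside $H_3$ at a subword involving two adjacent $i$-strands and one $j$-strand: the composition is encoded by a pair of consecutive $ij$-crossings, which by the Serre-type relation (\ref{qhalast}) equals a linear combination of configurations each involving a transposition and dots on the two $i$-strands. Since all such dotted transpositions are absorbed by the idempotent $1_{i^{(2)}}$ --- on which the symmetric group acts via the sign character (in the odd case, twisted as in the odd nil-Hecke algebra) --- each surviving summand vanishes, and the super signs $(-1)^{|i|(|j|+s)}$ in (\ref{qhalast}) combine precisely with the $\Pi$-shifts above to give global cancellation.

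Finally, exactness (hence, by Krull-Schmidt in the Laurentian category of finitely generated graded projective $H_{n+1}$-supermodules, splitting) follows from a dimension count: each $P(i^{(n-r)} j i^{(r)})$ decomposes into shifts of a common set of indecomposable projectives with multiplicities computable from the symmetric function algebra structure, and the alternating Euler characteristic of the complex coincides with the super quantum Serre relation (\ref{cwr2})--(\ref{cwr3}), which holds by definition of the covering quantum group. The main obstacle is the super sign bookkeeping: since dots and crossings no longer commute when $|i|=\1$ (Lemma~\ref{firstlemma}), the Koszul-type signs introduced by the shifts $\Pi^{\frac{r(r-1)}{2}|i| + r|i||j|}$ must cancel against those generated by (\ref{qhalast}) in every step. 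This is the only genuinely new ingredient relative to the even case, and is where the Hill-Wang analysis of the odd nil-Hecke algebra is essential.
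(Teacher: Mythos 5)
The paper does not prove this proposition at all: it is quoted verbatim from Hill--Wang and the proof is the single line ``See \cite[Theorem 5.9]{HW}.'' So you are attempting something the paper deliberately outsources, which is fine in principle, but your sketch has two genuine gaps.

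First, the verification of $d_{r+1}\circ d_r=0$ as you describe it does not work. The composite of two consecutive differentials is a diagram in which two $i$-strands pass the single $j$-strand; this is a reduced word for a permutation and is \emph{not} the configuration to which (\ref{qhalast}) applies -- that relation compares the two resolutions of a triple crossing, and its correction terms arise only in the pattern $i\,j\,i$ with equal outer colours. Moreover, your claimed mechanism for killing the resulting terms is wrong: the idempotent $1_{i^{(2)}}=D_1=-T_1X_1$ is not a sign idempotent for a symmetric group action (the crossings in the (odd) nil-Hecke algebra satisfy $T_r^2=0$, so no symmetric group acts), and ``dotted transpositions'' are not absorbed by it. The actual vanishing comes from factoring the divided-power idempotent so as to expose an $ii$-crossing adjacent to the two $ij$-crossings, sliding that $ii$-crossing through the $j$-strand using the correction-free case of (\ref{qhalast}) (outer colours $i$ and $j$ distinct), and then using $T_1 1_{i^{(2)}}=-T_1^2X_1=0$. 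Relatedly, your degree bookkeeping does not close up: the shifts in the Proposition are pure parity shifts, while the $Q$-shifts built into $P(i^{(a)}ji^{(b)})$ differ between consecutive terms by $d_i(n-2r-1)$, which a single $ij$-crossing of degree $d_id_{ij}$ does not compensate; the true differentials of \cite{HW} are built from thick splitters and mergers and are not just one sandwiched crossing.

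Second, and more seriously, your exactness argument is both insufficient and circular. A complex of projectives with vanishing Euler characteristic in $K_0$ need not be exact, so ``the alternating sum matches the quantum Serre relation'' proves nothing about exactness even if granted. And it cannot be granted here: the statement that the classes $[P(i^{(n-r)}ji^{(r)})]$ satisfy the Serre relation in $K_0(H_{n+1})$ is exactly the content of this Proposition, and in the logical order of the paper it is an \emph{input} to Lemma~\ref{relcheck} and to the identification $K_0(H)\cong\mathbf{f}_{\LC}$ of Theorem~\ref{betacat}; the Serre relation ``holding by definition'' in $\dot U_{q,\pi}(\g)$ says nothing about $K_0(H_{n+1})$ until that identification is available. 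In \cite{HW} (following Khovanov--Lauda and Rouquier) splitness and exactness are established by explicitly exhibiting a contracting homotopy / direct-sum decomposition, not by a dimension count. You would need to supply those splitting maps to complete the argument.
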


\begin{proof}
See \cite[Theorem 5.9]{HW}.
\end{proof}

Recall the $\LL$-algebra $\mathbf{f}$ defined at the
beginning of section \ref{sform}.
Let $\mathbf{f}_{\LC}$
be the $\LC$-subalgebra generated by the divided powers $\theta_i^{(n)}
:= \theta_i^n / [n]_{q_i, \pi_i}^!$ for all $i \in I$ and $n \geq 1$.
Using Lemma~\ref{necks} and Proposition~\ref{qserre}, it follows that
there is a unique $\LC$-algebra homomorphism
\begin{equation}\label{hwm}
\bar\gamma:\mathbf{f}_{\LC} \rightarrow 
\bigoplus_{n \geq 0} K_0(H_n),
\qquad \theta_i^{(n)} \mapsto [P(i^{(n)})].
\end{equation}

\begin{theorem}[Khovanov-Lauda, Hill-Wang]\label{betacat}
The homomorphism $\bar\gamma$ from (\ref{hwm}) is an isomorphism.
\end{theorem}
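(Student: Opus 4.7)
The plan is to establish the isomorphism by a two-pronged argument: surjectivity follows from Krull--Schmidt together with the calculations already available, while injectivity is forced by matching a categorical pairing on the right hand side with the non-degenerate bilinear form on $\mathbf{f}$. Throughout, I would work one weight at a time: fix $\alpha \in Q_+ := \bigoplus_i \Z_{\geq 0}\alpha_i$, let $H_\alpha$ denote the block of $H_n$ corresponding to words $\bi$ of weight $\alpha$ (where $n = \operatorname{ht}(\alpha)$), and reduce to showing that $\bar\gamma$ restricts to an isomorphism $\mathbf{f}_{\LC,\alpha} \xrightarrow{\sim} K_0(H_\alpha)$. Both sides are free $\LC$-modules of finite rank (by Krull--Schmidt applied to the Laurentian superalgebra $H_\alpha$, and by the standard PBW-type basis of $\mathbf{f}$), so it will suffice to check that the ranks agree and that $\bar\gamma$ hits a spanning set.

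For surjectivity, I would first observe that for any word $\bi = i_n\cdots i_1 \in I^{\otimes n}$, the projective
\[
P(\bi) := H_n 1_{\bi} \cong P(i_n) \circ \cdots \circ P(i_1)
\]
lies in the image of $\bar\gamma$, since $[P(\bi)] = [P(i_n)]\cdots[P(i_1)] = \bar\gamma(\theta_{i_n}\cdots\theta_{i_1})$. By Krull--Schmidt, every indecomposable graded projective $H_\alpha$-supermodule appears (up to shift) as a summand of some such $P(\bi)$. Lemma~\ref{necks} identifies the ``largest'' such summands with the divided power modules $P(i^{(n)})$, and by an iterative argument one shows that all indecomposable summands of $P(\bi)$ are expressible as $\LC$-linear combinations of divided power products in $K_0$. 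So every class in $K_0(H_\alpha)$ lies in the image, giving surjectivity.

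For injectivity, the key idea is to exploit the bilinear form $(-,-)$ on $\mathbf{f}$ described at the beginning of Section~\ref{sform}. The plan is to define a sesquilinear pairing
\[
\langle -, - \rangle_{K} : K_0(H_\alpha) \times K_0(H_\alpha) \rightarrow \LL
\]
categorifying $(-,-)$, via a graded (super)dimension count such as
\[
\langle [P], [Q] \rangle_K := \operatorname{gdim}_{q,\pi} \Hom_{H_\alpha}(P,Q),
\]
suitably normalized to account for parity shifts of irreducibles. Then I would verify two things: first, that $\bar\gamma$ intertwines this pairing with $(-,-)$, which reduces via a Mackey/shuffle argument on $r(\theta_{i_1}\cdots\theta_{i_n})$ to computing $\Hom$-spaces between induced modules $P(\bi), P(\bj)$, and these are controlled by the $\bi\bj$-matching combinatorics in a manner compatible with the explicit form on $\mathbf{f}$; second, that the form on $\mathbf{f}$ restricted to $\mathbf{f}_{\LC,\alpha}$ remains non-degenerate after a mild completion. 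Since the right hand pairing is non-degenerate (no nonzero projective can have zero Hom into every other), injectivity of $\bar\gamma$ follows.

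The main obstacle will be the Mackey-style computation identifying $\operatorname{gdim}_{q,\pi}\Hom_{H_\alpha}(P(\bi),P(\bj))$ with the matrix coefficient $(\theta_{\bi}, \theta_{\bj})$ of the algebraic form, because in the super setting the signs from the super-interchange law, the parity of odd crossings, and the parity of odd bubbles all interact. Here one must lean heavily on the quiver Hecke superalgebra basis theorem of \cite{KKT} (which expresses $H_\alpha$ as a free module over the polynomial/Clifford part of the idempotent truncation with basis indexed by permutations), together with the odd analog of the intertwiner formulas, to identify $\Hom$-spaces explicitly. Once these computations are in place, the non-degeneracy of $(-,-)$ on $\mathbf{f}_{\LC,\alpha}$, combined with the surjectivity established above and a rank count, forces $\bar\gamma$ to be an isomorphism.
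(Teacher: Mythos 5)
The paper itself does not prove this statement: its entire ``proof'' is the citation \emph{See \cite[Theorem 6.14]{HW}}, so you are really reconstructing the Khovanov--Lauda/Hill--Wang argument from scratch. Your injectivity half is a faithful outline of how that argument actually runs: one computes $\operatorname{gdim}_{q,\pi} 1_{\bi} H_n 1_{\bj}$ from the quiver Hecke superalgebra basis theorem of \cite{KKT}, matches it with $(\theta_{\bi},\theta_{\bj})$, and invokes the non-degeneracy of the form on $\mathbf{f}$ from \cite{CHW1}. The parity bookkeeping you flag as the main obstacle is genuinely where the super-specific work lies, and Corollary~\ref{jor} (all simples are absolutely irreducible of type $\mathtt{M}$) is what keeps the $K_0$-side pairing from being distorted by type-$\mathtt{Q}$ phenomena.

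The genuine gap is in your surjectivity argument. You correctly note that every indecomposable projective in $H_\alpha\GSProj$ is a summand of some $P(\bi)$, but the assertion that ``by an iterative argument one shows that all indecomposable summands of $P(\bi)$ are expressible as $\LC$-linear combinations of divided power products in $K_0$'' is precisely the content of surjectivity and cannot be obtained by iterating Lemma~\ref{necks} and Proposition~\ref{qserre}: those only let you decompose $P(\bi)$ when consecutive letters repeat or satisfy a Serre-type relation, and in general an indecomposable summand of $P(\bi)$ is \emph{not} itself a divided-power product, nor is its class obviously in their $\LC$-span. The rank count you propose as a fallback does not rescue this, for two reasons: knowing that the number of indecomposable projectives of weight $\alpha$ equals $\operatorname{rank}_{\LC}\mathbf{f}_{\LC,\alpha}$ is itself a theorem equivalent in difficulty to the statement being proved, and in any case an injective map between free $\LC$-modules of equal finite rank need not be surjective (multiplication by $1+q$ on $\LC$ already fails). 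The missing ingredient in the actual proofs is the dual Grothendieck group $G_0(H_\alpha)$ of finite-dimensional graded supermodules together with the character map $\operatorname{ch}:G_0(H_\alpha)\rightarrow\bigoplus_{\bi}\LC\cdot\bi$: one shows $\operatorname{ch}$ is injective, that the characters of the simple supermodules land in the lattice dual to $\mathbf{f}_{\LC,\alpha}$ under the bilinear form (this uses exactness of restriction to divided-power parabolic subalgebras and Frobenius reciprocity), and then surjectivity of $\bar\gamma$ on $K_0$ follows by dualizing. None of this machinery appears in your proposal, so as written the surjectivity half is circular.
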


\begin{proof}
See \cite[Theorem 6.14]{HW}.
\end{proof}

\begin{corollary}\label{jor}
Every irreducible graded $H_n$-supermodule is absolutely irreducible
of type $\mathtt{M}$.
\end{corollary}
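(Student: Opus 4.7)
The plan is to deduce this corollary as an immediate consequence of Theorem~\ref{betacat}, exploiting the $\LC$-module structure on both sides of the Hill--Wang isomorphism. The starting observation is that $\mathbf{f}_{\LC}$ is free as an $\LC$-module; this is part of the underlying content of \cite{CHW1, CHW2}, where a PBW-type basis (or, alternatively, Clark's canonical basis \cite{Clark}) furnishes an explicit $\LC$-basis, concentrated in a single $\Z/2$-parity in each weight. Transporting such a basis through $\bar\gamma$ then exhibits $\bigoplus_{n \geq 0} K_0(H_n)$ as a free $\LC$-module.

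Next I would carry out the standard Krull--Schmidt analysis for the Laurentian graded superalgebra $H_n$: $K_0(H_n)$ is $\Z[q,q^{-1}]$-free on isomorphism classes of indecomposable projective supermodules (up to internal grading shift), and the $\pi$-action is read off from $\Pi$-shift. Each type $\mathtt{M}$ irreducible $L$ contributes a pair $\{[P_L], [\Pi P_L]\}$ of non-isomorphic indecomposable projective classes, which assemble into a free $\LC$-summand of rank one generated by $[P_L]$. On the other hand, any hypothetical type $\mathtt{Q}$ irreducible $M$ would contribute a single class $[P_M]$ with $\Pi P_M \cong P_M$, yielding a summand isomorphic to $\LC/(\pi-1)\cong \Z[q,q^{-1}]$, which is not $\LC$-free. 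Since the total module is $\LC$-free by the previous paragraph, no type $\mathtt{Q}$ summands can occur, and every irreducible graded $H_n$-supermodule is of type $\mathtt{M}$.

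For the absolute irreducibility, I would apply the same argument after base change to an algebraic closure $\bar\k \supseteq \k$. The quiver Hecke supercategory $\H$ and the algebras $H_n$ are defined by generators and relations that commute with extension of scalars, so Lemma~\ref{necks}, Proposition~\ref{qserre}, and hence Theorem~\ref{betacat} all hold uniformly over $\bar\k$. Consequently $K_0(H_n \otimes_\k \bar\k)$ is also $\LC$-free of the same rank as $K_0(H_n)$, so the number of non-isomorphic indecomposable projectives is unchanged by base change. An irreducible $L$ over $\k$ whose endomorphism ring strictly contained $\k$ would split upon base change into several non-isomorphic pieces, contradicting this invariance; hence $\End_{H_n}(L) = \k$ concentrated in even degree zero, which is absolute irreducibility of type $\mathtt{M}$.

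The main obstacle I anticipate is ensuring that the $\LC$-freeness of $\mathbf{f}_{\LC}$, which underlies the whole argument, is stated explicitly in the form needed; if the cited results of Clark--Hill--Wang give only a $\LL$-basis of $\mathbf{f}$, one must check that the integral divided-power form $\mathbf{f}_{\LC}$ inherits $\LC$-freeness (typically via an explicit PBW basis). The base-change step is routine but requires that the idempotents $1_{i^{(n)}}$ and the resolution in Proposition~\ref{qserre} remain well-defined and split over $\bar\k$, both of which are automatic from their explicit constructions.
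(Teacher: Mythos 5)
Your argument hinges on the claim that $\mathbf{f}_{\LC}$ is free as an $\LC$-module, and this is where it breaks down. An $\LL$-basis of $\mathbf{f}$ (which is what the results of \cite{CHW1} provide) says nothing about the integral divided-power form $\mathbf{f}_{\LC}$; the only known source of an $\LC$-basis is the canonical basis of \cite{CHW2, Clark}, which is constructed under the bar-consistency hypothesis (\ref{barconsistency}). The present section deliberately avoids that hypothesis---Corollary~\ref{jor} feeds into Theorem~\ref{bits}, which is stated without bar-consistency---so importing those results here would weaken the theorem. Worse, the logical flow in the literature runs opposite to yours: Hill--Wang first prove directly that every irreducible graded $H_n$-supermodule is of type $\mathtt{M}$ (\cite[Proposition 6.15]{HW}, by an argument not passing through $K_0$), deduce from this that $\bigoplus_n K_0(H_n)$ is $\LC$-free on the indecomposable projectives, and only then obtain the $\LC$-freeness of $\mathbf{f}_{\LC}$ as a corollary of Theorem~\ref{betacat}. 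Using that freeness to prove type $\mathtt{M}$ is therefore circular unless you supply an independent proof of it, and none is available in this generality. (Your purely algebraic reduction---$\LC$-freeness forces the absence of $\LC/(\pi-1)$-summands, hence of type $\mathtt{Q}$ simples---is fine; it is the input that is missing.) You flagged this as a point to check, but it is not a citation-chasing issue: it is the crux, and the paper simply quotes \cite[Proposition 6.15]{HW} for this half of the statement.

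The base-change half is essentially the paper's argument (it is the one from \cite[Corollary 3.19]{KL1} that the proof points to), but note one imprecision: if $\End_{H_n}(L)$ were a nontrivial graded division ring, $L \otimes_\k \bar\k$ could decompose into several \emph{isomorphic} simples, so "the number of non-isomorphic indecomposables is unchanged" does not by itself give a contradiction. To close this, observe that base change induces an isomorphism on $K_0$ compatible with $\bar\gamma$, so the decomposition matrix expressing the classes $[P_L \otimes_\k \bar\k]$ in terms of indecomposable projectives over $\bar\k$ is a square matrix with non-negative integer entries that is invertible over $\LC$; the case $L \otimes_\k \bar\k \cong (L')^{\oplus d}$ with $d>1$, which your counting does not exclude, would make the corresponding column $d$ times a basis vector and the determinant a non-unit. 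With that fix, and with type $\mathtt{M}$ taken as the external input above, your base-change paragraph does recover the absolute irreducibility.
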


\begin{proof}
The absolute irreducibility follows from Theorem~\ref{betacat}; see the proof
of \cite[Corollary 3.19]{KL1}. They are all
of type $\mathtt{M}$ by \cite[Proposition 6.15]{HW}.
\end{proof}

Now we are going upgrade some of these results to $\UU(\g)$.
For each $\lambda \in P$, there is a graded superalgebra homomorphism
\begin{equation}\label{titan1}
\alpha_{n,\lambda}:H_n \rightarrow \bigoplus_{\bi,\bj \in I^{\otimes n}}
\Hom_{\UU(\g)}(E_\bi 1_\lambda, E_\bj 1_\lambda),
\end{equation}
where for $\bi = i_n \otimes \cdots \otimes i_1$ we write 
$E_\bi 1_\lambda$ for $E_{i_n} \cdots E_{i_1} 1_\lambda$.
In diagrammatic terms, $\alpha_{n,\lambda}$ takes the string diagram for
an element of $H_n$ to the 2-morphism whose diagram is obtained by adding the label
$\lambda$ on the right hand edge.
Applying this to $1_{i^{(n)}}$, we obtain the homogeneous idempotent
$\alpha_{n,\lambda}\left(1_{i^{(n)}}\right) \in \End_{\UU(\g)}(E_i^n 1_\lambda)$.
Then define the {\em divided power}
$E_i^{(n)} 1_\lambda$ to be the 1-morphism in the idempotent
completion
$\dot{\underline{\UU}}_{q,\pi}(\g)$
associated to the idempotent
$\big(\alpha_{n,\lambda}\left(1_{i^{(n)}}\right)\big)_{0,\0}^{0,\0}$
in the $(Q,\Pi)$-envelope.
Composing with the isomorphism $\omega$ from
Proposition~\ref{opiso}, we get also a graded superalgebra homomorphism
\begin{equation}\label{titan2}
\alpha'_{n,\lambda}:= \omega\circ\alpha_{n,\lambda}:H^{\operatorname{sop}}_n \rightarrow \bigoplus_{\bi,\bj \in I^{\otimes n}}
\Hom_{\U(\g)}(F_\bi 1_\lambda, F_\bj 1_\lambda),
\end{equation}
where $F_\bi 1_\lambda:= F_{i_n} \cdots F_{i_1} 1_\lambda$.
Let $F_i^{(n)} 1_\lambda$ be the 1-morphism in
$\dot{\underline{\UU}}_{q,\pi}(\g)$
associated to the idempotent
$\big(\alpha'_{n,\lambda}\left(1_{i^{(n)}}\right)\big)_{0,\0}^{0,\0}$.

\begin{lemma}\label{was}
In $K_0(\dot{\underline{\UU}}_{q,\pi}(\g))$, we have that
$[Q^0 \Pi^\0 E_i^{n} 1_\lambda] = [n]_{q_i, \pi_i}^! [E_i^{(n)} 1_\lambda]$ and
$[Q^0 \Pi^\0 F_i^{n} 1_\lambda] = [n]_{q_i, \pi_i}^! [F_i^{(n)} 1_\lambda]$.
\end{lemma}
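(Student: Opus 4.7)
The plan is to deduce both identities directly from Lemma~\ref{necks} by transporting an idempotent decomposition of $1_{i^n}$ in $H_n$ through the graded superalgebra homomorphisms $\alpha_{n,\lambda}$ and $\alpha'_{n,\lambda}$ from (\ref{titan1})--(\ref{titan2}).

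First I would unpack Lemma~\ref{necks} into an explicit idempotent decomposition. The isomorphism $H_n 1_{i^n} \cong P(i^{(n)})^{\oplus [n]_{q_i,\pi_i}^!}$, together with the Krull--Schmidt property (Corollary~\ref{jor}) and the primitivity of $1_{i^{(n)}}$, produces an orthogonal decomposition
$$
1_{i^n} = \sum_{\kappa} e_\kappa
$$
of $1_{i^n}$ into mutually orthogonal homogeneous idempotents $e_\kappa \in 1_{i^n} H_n 1_{i^n}$, indexed by a set whose associated $(q,\pi)$-generating function of grading/parity shifts is exactly $[n]_{q_i,\pi_i}^!$ (after absorbing the overall $Q^{-d_i n(n-1)/2}$ built into $P(i^{(n)})$), and such that each $H_n e_\kappa$ is isomorphic to $Q^{s(\kappa)}\Pi^{p(\kappa)} H_n 1_{i^{(n)}}$ for appropriate shifts with $\sum_\kappa q^{s(\kappa)}\pi^{p(\kappa)} = [n]_{q_i,\pi_i}^!$. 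Equivalently, in the idempotent completion of $H_n$, each $e_\kappa$ is conjugate to $1_{i^{(n)}}$ with the corresponding shifts.

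Next I would apply the graded superalgebra homomorphism $\alpha_{n,\lambda}$ to this decomposition. Since $\alpha_{n,\lambda}$ is grading- and parity-preserving, the images $\alpha_{n,\lambda}(e_\kappa)$ form an orthogonal family of homogeneous idempotent $2$-morphisms summing to the identity of $E_i^n 1_\lambda$. Passing into the $(Q,\Pi)$-envelope $\UU_{q,\pi}(\g)$ and then its idempotent completion $\dot{\underline{\UU}}_{q,\pi}(\g)$, each summand becomes isomorphic to $Q^{s(\kappa)}\Pi^{p(\kappa)} E_i^{(n)} 1_\lambda$, matching the definition of $E_i^{(n)} 1_\lambda$ via the $(0,\bar 0)$-shift of $\alpha_{n,\lambda}(1_{i^{(n)}})$. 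Taking classes in $K_0(\dot{\underline{\UU}}_{q,\pi}(\g))$ yields
$$
[Q^0 \Pi^{\bar 0} E_i^n 1_\lambda] = \sum_\kappa q^{s(\kappa)}\pi^{p(\kappa)} [E_i^{(n)} 1_\lambda] = [n]_{q_i,\pi_i}^!\,[E_i^{(n)} 1_\lambda].
$$
For the second identity I would run the exact same argument using $\alpha'_{n,\lambda} = \omega \circ \alpha_{n,\lambda}$ (equivalently, apply the Chevalley involution $\T$ from Proposition~\ref{opiso}, which categorifies $\omega$ and interchanges $E_i$ with $F_i$ while preserving the $(Q,\Pi)$-structure up to the extension $\tilde\omega$ of Proposition~\ref{prop1}).

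The main obstacle is purely bookkeeping: one has to verify that the multiset of grading and parity shifts produced by the $H_n$-side decomposition in Lemma~\ref{necks} does recover the \emph{balanced} quantum factorial $[n]_{q_i,\pi_i}^!$ rather than an unbalanced analogue. This amounts to unpacking the shift conventions in the definition of $P(i^{(n)})$ and in the $(Q,\Pi)$-envelope, and is essentially forced by the normalization $Q^{-d_i n(n-1)/2}$ in (\ref{crappymovie}). No genuinely new categorical input is required beyond Lemma~\ref{necks} and the grading on $\UU(\g)$ fixed in the introduction.
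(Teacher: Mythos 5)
Your proposal is correct and follows essentially the same route as the paper: split $1_{i^n}$ into mutually orthogonal homogeneous idempotents each conjugate to $1_{i^{(n)}}$ by homogeneous units whose degrees and parities encode $[n]^!_{q_i,\pi_i}$ (Lemma~\ref{necks}), push this through $\alpha_{n,\lambda}$ (resp.\ $\alpha'_{n,\lambda}$ for the $F$ case), and pass to the $(Q,\Pi)$-envelope and idempotent completion before taking classes in $K_0$. The only cosmetic difference is that you attribute the Krull--Schmidt input to Corollary~\ref{jor} rather than to the Laurentian property recalled at the start of section~\ref{sonto}, and your concern about recovering the balanced factorial is exactly what the normalization $Q^{-d_i n(n-1)/2}$ in (\ref{crappymovie}) is designed to ensure.
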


\begin{proof}
This follows from the definitions and Lemma~\ref{necks}.
To give some more detail, Lemma~\ref{necks} means that the idempotent $1_{i^n}
\in H_n$ splits as a sum of $n!$ idempotents, each of which
is conjugate via some unit in $H_n$ to $1_{(i^n)}$. These units are
homogeneous of various degrees and parities encoded in the
$(q,\pi)$-factorial
$[n]_{q_i,\pi_i}^!$.
When we apply the homomorphism $\alpha_{n,\lambda}$ to this
decomposition, 
we deduce that the 2-morphism 
$1_{E_i^n\lambda}$ splits as a sum of $n!$ idempotents, each of which
is conjugate by some homogeneous unit in $\End_{\U(\g)}(E_i^n
1_\lambda)$ to
$\alpha_{n,\lambda}\left(1_{i^{(n)}}\right)$.
Passing to $\dot{\underline{\UU}}_{q,\pi}(\g)$,
we get from this an
isomorphism $Q^0 \Pi^\0 E_i^n 1_\lambda \stackrel{\sim}{\rightarrow}
E_i^{(n)} 1_\lambda^{\oplus [n]_{q_i, \pi_i}}$ by
taking the direct sum of these units appropriately shifted so that
they become even of degree zero. 
\end{proof}

\begin{lemma}\label{relcheck}
In $K_0(\dot{\underline{\UU}}_{q,\pi}(\g))$, we have that
\begin{align*}
[Q^0 \Pi^\0 E_i F_j 1_\lambda]  -
 [Q^0 \Pi^{|i||j|} F_j E_i 1_\lambda]&=
\delta_{i,j} [\langle
h_i,\lambda\rangle]_{q_i,\pi_i} [1_\lambda],\!\!\!\!\!\!\!\!\!\!\!\!\!\!\!\!\!\\
\sum_{r=0}^{d_{ij}+1} (-1)^r \pi_i^{r|j| + r(r-1)/2} [E_i^{(d_{ij}+1-r)} E_j^{(1)}
E_i^{(r)} 1_\lambda] &= 0&(i \neq j),\\
\sum_{r=0}^{d_{ij}+1} (-1)^r \pi_i^{r|j| + r(r-1)/2} [F_i^{(d_{ij}+1-r)} F_j^{(1)}
F_i^{(r)} 1_\lambda] &= 0&(i \neq j).
\end{align*}
\end{lemma}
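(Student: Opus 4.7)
The proof treats the three assertions separately. For the commutation relation, I would proceed by case analysis on whether $i=j$ and on the sign of $h := \langle h_i,\lambda\rangle$, invoking the inversion relations (\ref{inv1})--(\ref{inv3}) inside the $(Q,\Pi)$-envelope. For $i\neq j$, the leftward crossing from (\ref{inv1}) has degree $0$ and parity $|i||j|$, hence lifts to an isomorphism $Q^0\Pi^\0 E_j F_i 1_\lambda \cong Q^0\Pi^{|i||j|} F_i E_j 1_\lambda$, whose $K_0$-classes directly cancel. For $i=j$ with $h \geq 0$, the inversion relation (\ref{inv2}) upgrades to
\[
Q^0\Pi^\0 E_i F_i 1_\lambda \;\cong\; Q^0 \Pi^{|i|} F_i E_i 1_\lambda \;\oplus\; \bigoplus_{n=0}^{h-1} Q^{d_i(h-1-2n)} \Pi^{|i|n} 1_\lambda,
\]
where the shifts on the $n$-th cap summand come from its degree $d_i(1-h+2n)$ and parity $|i|n$. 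Passing to $K_0$ collapses the sum via $\sum_{n=0}^{h-1} q_i^{h-1-2n}\pi_i^n = [h]_{q_i,\pi_i}$, and the case $h\leq 0$ is symmetric using (\ref{inv3}) and the identity $[h]_{q_i,\pi_i} = -\pi_i[-h]_{q_i,\pi_i}$.

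For the Serre relation among the $E$'s, I would push Proposition \ref{qserre} through the homomorphism $\alpha_{n+1,\lambda}$, where $n := d_{ij}+1$. The alternating-sum consequence of the split exact sequence reads
\[
\sum_{r=0}^n (-1)^r \pi_i^{r|j|+r(r-1)/2}\,[P(i^{(n-r)} j i^{(r)})] = 0 \quad \text{in } K_0(H_{n+1}),
\]
where the sign pattern $(-1)^r \pi_i^{r|j|+r(r-1)/2}$ arises by unpacking the $\Pi$-shifts in Proposition \ref{qserre} and grouping even and odd $r$. I would then establish an $\LC$-linear correspondence $[P(i^{(r)} j i^{(s)})] \leftrightarrow [E_i^{(r)} E_j E_i^{(s)} 1_\lambda]$ induced by $\alpha_{n+1,\lambda}$: the map is first set up on ``plain'' words $\bi$ by sending $[H_{n+1} 1_\bi]$ to $[E_\bi 1_\lambda]$ via idempotent-splitting, and then passed to the divided-power level by comparing $[H_{r+s+1} 1_{i^r j i^s}] = [r]_{q_i,\pi_i}^! [s]_{q_i,\pi_i}^! [P(i^{(r)} j i^{(s)})]$ (from Lemma \ref{necks}) with $[Q^0 \Pi^\0 E_i^r E_j E_i^s 1_\lambda] = [r]_{q_i,\pi_i}^! [s]_{q_i,\pi_i}^! [E_i^{(r)} E_j E_i^{(s)} 1_\lambda]$ (from Lemma \ref{was}). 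Applying this correspondence to the above identity delivers the claimed relation.

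The Serre relation among the $F$'s follows by applying the Chevalley involution $\tilde\omega$ from Proposition \ref{prop1}. Since $\tilde\omega$ swaps $E_i$ with $F_i$ on 1-morphisms and descends to an antiautomorphism of $K_0(\dot{\underline{\UU}}_{q,\pi}(\g))$ compatible with the $\LC$-structure (it sends $q \mapsto q^{-1}$ in the Grothendieck ring, so it preserves $\Z$-linear relations with no $q$-coefficients, which is exactly the shape of the desired identity), the $E$-Serre relation gets transported verbatim into the $F$-Serre relation---one checks that the $\pi$-coefficients $\pi_i^{r|j|+r(r-1)/2}$ are indeed preserved because they are bar-invariant. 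Equivalently, the map $\alpha'_{n+1,\lambda} = \omega \circ \alpha_{n+1,\lambda}$ from (\ref{titan2}) applies the argument of the previous paragraph with $F$'s substituted for $E$'s throughout.

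The main obstacle is the bookkeeping in the second step: justifying that the $Q$-shifts inherent in the definition $P(i^{(r)} j i^{(s)}) = Q^{-d_i r(r-1)/2 - d_i s(s-1)/2} H_{n+1} 1_{i^{(r)} j i^{(s)}}$ and the shifts encoded in the $(q,\pi)$-factorials $[r]_{q_i,\pi_i}^![s]_{q_i,\pi_i}^!$ conspire to give the clean correspondence $[P(\bi)] \leftrightarrow [E_\bi 1_\lambda]$ without leaving spurious $q_i$-factors. The key point is that the idempotent splittings of $1_{i^{(r)} j i^{(s)}}$ are performed in parallel on the two sides via $\alpha_{n+1,\lambda}$, so the combinatorial structure of the factorization of $H_{n+1} 1_{i^r j i^s}$ into shifted copies of $P(i^{(r)} j i^{(s)})$ matches the factorization of $Q^0\Pi^\0 E_i^r E_j E_i^s 1_\lambda$ into shifted copies of $E_i^{(r)} E_j E_i^{(s)} 1_\lambda$ term-by-term.
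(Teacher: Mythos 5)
Your proof follows the paper's argument for all three identities: the commutation relation is read off from the inversion relations (\ref{inv1})--(\ref{inv3}) lifted to the $(Q,\Pi)$-envelope with the shifts dictated by the degrees and parities of the crossing and the dotted caps, and the two Serre relations are obtained by transporting Proposition~\ref{qserre} through $\alpha_{n+1,\lambda}$ and $\alpha'_{n+1,\lambda}$; your extra bookkeeping via Lemmas~\ref{necks} and~\ref{was} to pin down the correspondence $[P(i^{(r)}ji^{(s)})]\leftrightarrow[E_i^{(r)}E_jE_i^{(s)}1_\lambda]$ is precisely what the paper means by ``interpret the isomorphism in terms of idempotents.'' One correction: the identity $[h]_{q_i,\pi_i}=-\pi_i[-h]_{q_i,\pi_i}$ you invoke for the case $h\le 0$ is false when $h$ is even and $i$ is odd (the correct statement from the paper is $\overline{[h]}_{q,\pi}=-\pi[-h]_{q,\pi}$, equivalently $[h]_{q,\pi}=-\pi^{h}[-h]_{q,\pi}$), so that case is not obtained by a formal symmetry; it should be done directly from (\ref{inv3}), where the degree and parity shifts of the dotted rightward cups again assemble into $[\langle h_i,\lambda\rangle]_{q_i,\pi_i}[1_\lambda]$.
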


\begin{proof}
The first identity follows from the inversion relations
(\ref{inv1})--(\ref{inv3}).
For example, to prove it in the case $i=j$ and $\langle h_i,\lambda
\rangle \leq 0$, we use (\ref{inv3}) to see that there is an
isomorphism
in $\dot{\underline{\U}}_{q,\pi}(\g)$
$$
Q^0 \Pi^{|i|} E_i F_i 1_\lambda
\oplus \bigoplus_{n=0}^{-\langle h_i, \lambda \rangle-1}
Q^{d_i(-\langle h_i,\lambda\rangle-1-2n)} \Pi^{n|i|} 1_\lambda
\stackrel{\sim}{\rightarrow}
Q^0 \Pi^\0 F_i E_i 1_\lambda.
$$
Since 
$[\langle h_i,\lambda\rangle]_{q_i,\pi_i} = 
-\pi_i \sum_{n=0}^{-\langle h_i,\lambda\rangle-1} q_i^{-\langle
  h_i,\lambda\rangle-1-2n} \pi_i^n$,
this gives what we need on
 passing to the Grothendieck group.

The second two identities 
are consequences of Proposition~\ref{qserre}.
One needs to interpret the isomorphism there first in terms of
idempotents, then apply the homomorphisms 
$\alpha_{n+1,\lambda}$
and
$\alpha_{n+1,\lambda}'$.
\end{proof}

\begin{theorem}\label{bits}
There is a unique surjective $\LC$-algebra homomorphism
$$
\gamma:\dot U_{q,\pi}(\g)_{\mathcal L} \twoheadrightarrow
K_0(\underline{\dot\UU}_{q,\pi}(\g))
$$
sending $1_\lambda, e_i^{(n)}1_\lambda$
and $f_i^{(n)} 1_\lambda$ to $[1_\lambda]$,
$[E_i^{(n)} 1_\lambda]$ and $[F_i^{(n)} 1_\lambda]$, respectively.
\end{theorem}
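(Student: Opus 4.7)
The plan is to construct $\gamma$ in two stages: first showing it is well defined on generators, and then verifying surjectivity onto $K_0(\dot{\underline{\UU}}_{q,\pi}(\g))$.

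For well-definedness, the algebra $\dot U_{q,\pi}(\g)_\LC$ is cut out inside $\dot U_{q,\pi}(\g)$ by the defining relations (\ref{cwr1})--(\ref{cwr3}) together with the divided-power equalities $e_i^{(n)} 1_\lambda = e_i^n 1_\lambda / [n]_{q_i,\pi_i}^!$ and $f_i^{(n)} 1_\lambda = f_i^n 1_\lambda / [n]_{q_i,\pi_i}^!$. The divided-power equalities are respected by the assignment in the theorem thanks to Lemma \ref{was}, which gives precisely $[Q^0 \Pi^\0 E_i^n 1_\lambda] = [n]_{q_i,\pi_i}^! [E_i^{(n)} 1_\lambda]$ (and similarly for $F_i^n$) in $K_0$. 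The three relations (\ref{cwr1})--(\ref{cwr3}) are exactly what is verified in Lemma \ref{relcheck}: the commutation relation (\ref{cwr1}) is read off from the inversion isomorphisms (\ref{inv1})--(\ref{inv3}) of Definition \ref{def1}, while the quantum Serre relations (\ref{cwr2})--(\ref{cwr3}) are pulled back from Hill--Wang's split exact sequence of Proposition \ref{qserre} via the homomorphisms $\alpha_{n,\lambda}$ and $\alpha'_{n,\lambda}$ of (\ref{titan1})--(\ref{titan2}). Hence $\gamma$ is a well defined locally unital $\LC$-algebra homomorphism.

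For surjectivity, the image of $\gamma$ is a locally unital $\LC$-subalgebra containing $[1_\lambda], [E_i^{(n)} 1_\lambda], [F_i^{(n)} 1_\lambda]$ for every $i,n,\lambda$. Since the algebra structure on $K_0(\dot{\underline{\UU}}_{q,\pi}(\g))$ comes from horizontal composition of 1-morphisms, repeated application of Lemma \ref{was} shows that the image contains the class $[E_\bi 1_\lambda]$ of every word 1-morphism indexed by a signed sequence $\bi \in \SSeq$. I must then argue that these word classes span $K_0$ as an $\LC$-module. By the very construction of $\dot{\underline{\UU}}_{q,\pi}(\g)$ as the idempotent completion of the additive envelope of $\underline{\UU}_{q,\pi}(\g)$, every indecomposable 1-morphism appears, up to grading and parity shift, as a direct summand of some word $E_\bi 1_\lambda$. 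The task is then to untangle the Krull--Schmidt decomposition to express each indecomposable class as an $\LC$-combination of word classes.

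The main obstacle is this last step: the Krull--Schmidt inversion. I plan to handle it by the same inductive argument used in the even case by Khovanov--Lauda \cite[Proposition 3.28]{KL3}: one orders indecomposable 1-morphisms by a suitable weight/length statistic and shows the transition matrix from indecomposable classes to word classes is upper unitriangular (up to invertible scalars in $\LC$), which permits the inversion. To justify this in the super setting I will use the Hill--Wang framework: Theorem \ref{betacat} identifies $\mathbf{f}_\LC$ with $\bigoplus_n K_0(H_n)$, and Corollary \ref{jor} guarantees that irreducible graded $H_n$-supermodules are absolutely irreducible of type $\mathtt{M}$, so Krull--Schmidt operates cleanly in $H_n\GSProj$. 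Combined with the surjective homomorphism $\alpha_{n,\lambda}: H_n \to \bigoplus_{\bi,\bj} \Hom_{\UU(\g)}(E_\bi 1_\lambda, E_\bj 1_\lambda)$ from (\ref{titan1}) and its downward counterpart (\ref{titan2}), this transports the classes $[P(i^{(n)})]$ faithfully to the needed indecomposable classes $[E_i^{(n)} 1_\lambda]$ (and $[F_i^{(n)} 1_\lambda]$) in $K_0(\dot{\underline{\UU}}_{q,\pi}(\g))$, and the rest of the inductive argument transfers from the even case with only bookkeeping changes for parity shifts.
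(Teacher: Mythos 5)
Your first paragraph (well-definedness) is correct and coincides with the paper's argument: Lemma \ref{was} takes care of the divided powers and Lemma \ref{relcheck} verifies the relations (\ref{cwr1})--(\ref{cwr3}), so the map exists. The gap is in the surjectivity step. Knowing that all word classes $[E_\a 1_\lambda]$, $\a \in \SSeq$, lie in the image of $\gamma$ does not by itself show that the classes of their indecomposable summands do, and the mechanism you propose --- a unitriangular transition matrix between indecomposables and words ordered by a length statistic --- is not available. A word of length $N$ can have several non-isomorphic indecomposable summands of maximal width $N$, distinct words of the same length can share summands, and the set of indecomposable objects is not known a priori (pinning it down is essentially the content of the conjectures), so there is no square matrix to triangularize, let alone invert. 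A secondary error: $\alpha_{n,\lambda}$ is \emph{not} surjective onto $\bigoplus_{\bi,\bj}\Hom_{\UU(\g)}(E_\bi 1_\lambda, E_\bj 1_\lambda)$; by Theorem \ref{spanning} these Hom spaces also carry the right action of $\SYM$ via bubbles, which do not come from the quiver Hecke superalgebra.

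The argument that actually closes the gap (following \cite[$\S$3.8]{KL3}) is an induction on the \emph{width} of an indecomposable $X$, namely the minimal length $N$ of a word having $X$ as a summand up to shift. Using the inversion relations one realizes $X$ as a summand of some $E_\bi F_{\bi'} 1_\lambda$ with $n+n'=N$, so $[X]$ is the image of a class $[Y]$ from the algebra $H_{n,n',\lambda}$ of 2-morphisms between such mixed words. One then passes to the quotient of $H_{n,n',\lambda}$ by the ideal $I_{n,n',\lambda}$ spanned by diagrams containing a U-turn; the composite of $\alpha_{n,n',\lambda}\colon H_n \otimes H_{n'}^{\operatorname{sop}}\otimes\SYM \to H_{n,n',\lambda}$ with this quotient map is surjective, and its effect on $K_0$ is controlled by Theorem \ref{betacat}, Corollary \ref{jor} and the generalities on Laurentian algebras. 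Minimality of the width of $X$ forces $[Y]$ to survive in the quotient; lifting it back through $\alpha_{n,n',\lambda}$ produces $[Y]+[Y']$ where $Y'$ is killed by the quotient map, hence consists of summands factoring through strictly shorter words, i.e.\ of strictly smaller width, and these are absorbed by the induction hypothesis. This induction through the U-turn ideal, rather than any triangularity of a basis-change matrix, is the essential idea missing from your proposal.
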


\begin{proof}
To establish the existence of the homomorphism $\gamma$,
note to start with that
there 
is an $\LL$-algebra homomorphism 
$\dot U_{q,\pi}(\g) \rightarrow \LL \otimes_{\LC} 
K_0(\underline{\dot\UU}_{q,\pi}(\g))$ 
sending $1_\lambda, e_i^{(r)}1_\lambda$
and $f_i^{(r)} 1_\lambda$ to $[1_\lambda]$,
$[E_i^{(r)} 1_\lambda]$ and $[F_i^{(r)} 1_\lambda]$, respectively.
To see this, we just have to check the defining relations of $\dot
U_{q,\pi}(\g)$ from (\ref{cwr1})--(\ref{cwr3}), which follow by Lemma~\ref{relcheck}.
Then we restrict this homomorphism to $\dot U_{q,\pi}(\g)_{\LC}$,
observing that the image of the restriction lies in 
$K_0(\underline{\dot\UU}_{q,\pi}(\g))$ thanks to Lemma~\ref{was}.

It remains to prove that $\gamma$ is surjective.
The proof of this is essentially the same as the proof in the purely
even case given in \cite[$\S$3.8]{KL3}, so we will try to be brief.
For $n,n' \geq 0$ and $\lambda \in P$, we let
$$
H_{n,n',\lambda} := \bigoplus_{\substack{\bi,\bj \in I^{\otimes n} \\ \bi',\bj'
  \in I^{\otimes n'}}} \Hom_{\UU(\g)}(E_\bi F_{\bi'}
1_\lambda, E_\bj F_{\bj'} 1_\lambda).
$$
Idempotents in this algebra are idempotent 2-morphisms in $\UU(\g)$,
hence, there is a canonical homomorphism $$
\delta_{n,n',\lambda}:K_0(H_{n,n',\lambda})
\rightarrow
K_0(\dot{\underline{\UU}}_{q,\pi}(\g)).
$$
Moreover, there is an $\LC$-algebra homomorphism
$$
\alpha_{n,n',\lambda}:H_n \otimes H_{n'}^{\operatorname{sop}} \otimes \SYM
\rightarrow H_{n,n',\lambda}
$$
sending $a \otimes a' \otimes p$ to $\alpha_{n,\mu}(a)
\alpha_{n',\lambda}'(a') \beta_\lambda(p)$, where 
$\mu$ is
the weight labeling the left hand edge of  the diagram 
$\alpha_{n',\lambda}'(a') 1_\lambda$.
Let $I_{n,n',\lambda}$ be the two-sided ideal of
$H_{n,n',\lambda}$ spanned by all string diagrams which involve a
U-turn, i.e. they involve at least one arc whose endpoints are both on
the top edge; cf. \cite[Proposition 3.17]{KL3}.
Let $$
\beta_{n,n',\lambda}:H_{n,n',\lambda}\twoheadrightarrow
H_{n,n',\lambda} / I_{n,n',\lambda}
$$ 
be the canonical quotient map.
The composition $\gamma_{n,n',\lambda} := \beta_{n,n',\lambda}\circ\alpha_{n,n',\lambda}$ is
surjective.
We get induced a commutative diagram at the level of Grothendieck groups:
$$
\begin{tikzcd}
K_0(H_n \otimes H_{n'}^{\operatorname{sop}} \otimes \SYM)
\arrow[two heads]{rr}{[\gamma_{n,n',\lambda}]}
\arrow[swap]{rd}{[\alpha_{n,n',\lambda}]}
&&
K_0\left(H_{n,n',\lambda} / I_{n,n',\lambda}\right) \\
&
K_0(H_{n,n',\lambda}) 
\arrow[two heads,swap]{ur}{[\beta_{n,n',\lambda}]}
\end{tikzcd}.
$$
Following the proof of \cite[Proposition 3.36]{KL3}, using the facts
summarized at the start of this section plus the fact that $H_n$ is
finite as a module over its center, one shows that
$[\gamma_{n,n',\lambda}]$ is onto, hence, so too is $[\beta_{n,n',\lambda}]$.

Now let $X$ be an indecomposable object in
$\dot{\underline{\UU}}_{q,\pi}(\g)$.
Define its {\em width} to be the smallest $N \geq 0$ such that
$X$ is isomorphic 
to a summand of $Q^m \Pi^b E_\a 1_\lambda$ for some $\a \in \SSeq$
of length $N$ and some $m \in \Z, b \in \Z/2$ and $\lambda \in P$.
We are going to show by induction on width that
each $[X]$ is in the image of $\gamma$.
For the base case, if $X$ is of width zero, 
we claim that 
it is 
isomorphic to some $Q^m \Pi^b 1_\lambda$. 
To see this, recall that
$\End_{\UU(\g)}(1_\lambda)$ is a quotient of $\SYM$, which is strictly
postively graded with $\k$ in degree zero. Hence, $1_\lambda$ is
either indecomposable or zero, which implies our claim.
Since $[1_\lambda]$ is in the image of $\gamma$, the base of the
induction is now established.

For the induction step, take $X$ of width $N > 0$.
We can find some $n,n' \geq 0$ with $n+n' = N$
and $\bi \in I^{\otimes n}, \bi' \in I^{\otimes n'}$
such that $X$ is isomorphic 
to a summand of $Q^m \Pi^b E_\bi F_{\bi'} 1_\lambda$.
This is a consequence of the relations (\ref{inv1})--(\ref{inv3}); cf. the
proof of \cite[Lemma 3.38]{KL3}.
It follows that $[X]$ is in the image of $\delta_{n,n',\lambda}$,
i.e. there is some 
$Y \in H_{n,n',\lambda}\GSProj$
such that $\delta_{n,n',\lambda}([Y]) = [X]$.
The minimality in the definition of width ensures that
$\beta_{n,n',\lambda}([Y]) \neq 0$.
Pick $Z \in H_n \otimes H_{n'}^{\operatorname{sop}} \otimes
\SYM\GSProj$ such that $[\gamma_{n,n',\lambda}]([Z]) = [\beta_{n,n',\lambda}]([Y])$.
Then one argues explicitly with idempotents as in
\cite[$\S$3.8.4]{KL3}
to see that 
$$
[\alpha_{n,n',\lambda}]([Z])= [Y] + [Y']
$$
for $Y' \in H_{n,n',\lambda}\GSProj$ with
$[\beta_{n,n',\lambda}]([Y']) = 0$.
By induction, $\delta_{n,n',\lambda}([Y'])$ is in the image of
$\gamma$. Hence, to show that $[X] = \delta_{n,n',\lambda}([Y])$ is
so, we are reduced to showing that
$\delta_{n,n',\lambda}([\alpha_{n,n',\lambda}]([Z]))$ is in the image of $\gamma$.
This follows using the following commutative diagram:
$$
\begin{tikzcd}
&\mathbf{f}_\LC \otimes_\LC \mathbf{f}_\LC
\arrow[swap]{dl}{i_\lambda}
\\
\dot U_{q,\pi}(\g)_\LC
\arrow[swap]{dd}{\gamma}
&
&K_0(H_n) \otimes_\LC K_0(H_{n'}) 
\arrow[swap]{ul}{\bar\gamma^{-1}\otimes\bar\gamma^{-1}}
\\\\
K_0(\dot{\underline{\UU}}_{q,\pi}(\g))
&&K_0(H_n \otimes H_{n'}^{\operatorname{sop}}\otimes\SYM)
\arrow{dl}{[\alpha_{n,n',\lambda}]}
\arrow[swap]{uu}{j_{n,n'}}
\\
&K_0(H_{n,n',\lambda})
\arrow{ul}{\delta_{n,n',\lambda}}
\end{tikzcd}.
$$
 Here, $\bar\gamma$ is the isomorphism from Theorem~\ref{betacat}.
the isomorphism $j_{n,n'}$ exists because of Corollary~\ref{jor}, and
$i_\lambda$ sends 
$\theta_{i_1} \cdots \theta_{i_n} \otimes
\theta_{j_1}\cdots \theta_{j_m} \mapsto 
e_{i_1}\cdots e_{i_n} f_{j_1}\cdots f_{j_m} 1_\lambda$
\end{proof}

\section{The decategorification conjecture}\label{s4}

We continue to assume the homogeneity condition (\ref{hc}) holds and
that $\k = \k_\0$ is a field.
Let us restate the Decategorification Conjecture from the introduction:

\vspace{2mm}
\noindent
{\bf Decategorification Conjecture.}
{\em 
The surjective homomorphism
$\gamma$
from Theorem~\ref{bits} is an isomorphism.}

\vspace{2mm}

The proof of the following theorem mimics \cite[$\S$3.9]{KL3}.

\begin{theorem}\label{injectivity}
Assume that the Nondegeneracy Conjecture holds and moreover that the
Cartan datum is bar-consistent, i.e. (\ref{barconsistency}) holds. Then the
Decategorification Conjecture holds as well.
\end{theorem}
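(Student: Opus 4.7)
The plan is to deduce injectivity of the surjective $\LC$-algebra homomorphism $\gamma$ from the non-degeneracy of the sesquilinear form $\langle-,-\rangle$ on $\dot U_{q,\pi}(\g)$ established in Theorem~\ref{clark}; this is precisely where bar-consistency (\ref{barconsistency}) enters. Since $\LL$ is a flat localization of $\LC$, injectivity of $\gamma$ is equivalent to injectivity of $\gamma\otimes_\LC\LL$: flatness forces $\ker\gamma$ to be $\LL$-torsion, and $\dot U_{q,\pi}(\g)_\LC\subseteq \dot U_{q,\pi}(\g)$ is $\LL$-torsion-free. To establish injectivity of $\gamma\otimes\LL$ I will construct a sesquilinear pairing $\{\!\!\{-,-\}\!\!\}$ on $\LL\otimes_\LC K_0(\dot{\underline{\UU}}_{q,\pi}(\g))$ with values in $\LL$, such that $\gamma\otimes\LL$ intertwines $\{\!\!\{-,-\}\!\!\}$ with $\langle-,-\rangle$; then any $x$ with $(\gamma\otimes\LL)(x)=0$ must satisfy $\langle x,y\rangle = 0$ for every $y$, forcing $x=0$ by non-degeneracy.

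The categorified pairing is defined as follows. For 1-morphisms $F, G:\lambda\to\mu$ in $\dot{\underline{\UU}}_{q,\pi}(\g)$, the graded superspace $\Hom_{\UU(\g)}(F,G)$ (extended to the idempotent completion in the evident way) carries a right $\SYM$-module structure via the homomorphism $\beta_\lambda$ from (\ref{beta}). Set
$$
\{\!\!\{[F], [G]\}\!\!\} := \operatorname{sdim}_\k\bigl(\Hom_{\UU(\g)}(F,G)\otimes_\SYM \k\bigr) \in \LL,
$$
where $\operatorname{sdim}_\k$ denotes the graded superdimension (with $q$ tracking the $\Z$-grading and $\pi$ tracking the $\Z/2$-grading), and $\k$ is viewed as a left $\SYM$-module via the augmentation sending every positive-degree generator to zero. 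Additivity in each variable is immediate, so this pairing descends to $K_0$. Using Definition~\ref{ernie} to identify $\Hom_{\underline{\UU}_{q,\pi}(\g)}(F, Q^n\Pi^a G)$ with $\Hom_{\UU(\g)}(F,G)_{-n,a}$, one checks $\{\!\!\{q^n\pi^a[F],[G]\}\!\!\} = q^{-n}\pi^a\{\!\!\{[F],[G]\}\!\!\}$ and $\{\!\!\{[F],q^n\pi^a[G]\}\!\!\} = q^n\pi^a\{\!\!\{[F],[G]\}\!\!\}$, matching the sesquilinearity (antilinear in the first argument, linear in the second) of $\langle-,-\rangle$.

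To verify the intertwining on generators, surjectivity of $\gamma\otimes\LL$ together with sesquilinearity reduces the task to checking $\{\!\!\{\gamma(e_\a 1_\lambda), \gamma(e_\b 1_\lambda)\}\!\!\} = \langle e_\a 1_\lambda, e_\b 1_\lambda\rangle$ for all $\a,\b\in\SSeq$. The Nondegeneracy Conjecture combined with Theorem~\ref{spanning} asserts that $\Hom_{\UU(\g)}(E_\a 1_\lambda, E_\b 1_\lambda)$ is a free right $\SYM$-module with basis $\{f(\sigma,\lambda) : \sigma\in\widehat{M}(\a,\b)\}$ homogeneous of degree $\deg(\sigma,\lambda)$ and parity $|\sigma,\lambda|$. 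Tensoring with $\k$ over $\SYM$ yields a graded superspace with basis $\{\bar f(\sigma,\lambda)\}$ and superdimension $\sum_\sigma q^{\deg(\sigma,\lambda)}\pi^{|\sigma,\lambda|}$, which is precisely $\langle e_\a 1_\lambda, e_\b 1_\lambda\rangle$ by Theorem~\ref{sesqui}. The principal obstacle is the careful bookkeeping around sesquilinearity---in particular, confirming that the $\SYM$-action descends coherently to the idempotent completion so that $\{\!\!\{-,-\}\!\!\}$ is genuinely well-defined on all of $K_0$, and propagating the match from the monomials $e_\a 1_\lambda$ (which are not in general integral) to the $\LC$-form $\dot U_{q,\pi}(\g)_\LC$ by clearing quantum-factorial denominators via Lemma~\ref{was}. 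With these details in place, injectivity of $\gamma\otimes\LL$, and hence of $\gamma$, follows as outlined.
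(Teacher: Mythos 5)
Your proposal is correct and follows essentially the same route as the paper: both arguments define a sesquilinear pairing on $K_0(\dot{\underline{\UU}}_{q,\pi}(\g))$ from graded superdimensions of $2$-morphism spaces, use the Nondegeneracy Conjecture together with Theorem~\ref{sesqui} to show $\gamma$ intertwines it with $\langle-,-\rangle$, and conclude injectivity from Theorem~\ref{clark}. The only differences are cosmetic: you normalize by tensoring $\Hom_{\UU(\g)}(F,G)$ with $\k$ over $\SYM$ where the paper divides $\dim_{q,\pi}$ by $\dim_{q,\pi}\SYM$ (these agree under the freeness hypothesis), and you spell out the torsion-freeness reduction to $\LL$ that the paper leaves implicit; also note the monomials $e_\a 1_\lambda$ already lie in $\dot U_{q,\pi}(\g)_\LC$, so no clearing of quantum-factorial denominators is needed.
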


\begin{proof}
For a graded superspace $V$, we let $\dim_{q,\pi} V := \sum_{n \in \Z}
\sum_{a \in \Z/2} (\dim V_{n,a}) q^n \pi^a$. For example,
viewing the algebra $\SYM$ from (\ref{SYM}) as a graded superalgebra
so that the isomorphism 
(\ref{beta}) preserves degrees and parities, we have that
$$
S := \dim_{q,\pi} \SYM = \prod_{i \in I} \prod_{r \geq 1}
\frac{1}{1-(\pi_i q_i^2)^r}
\in \Z[[q]][\pi] / (\pi^2-1).
$$
The Nondegeneracy Conjecture implies (indeed, is equivalent to) the
assertion that
\begin{equation}\label{dum}
\langle e_\a 1_\lambda, e_\b 1_\lambda \rangle 
=
S^{-1} \dim_{q,\pi} \Hom_{\UU(\g)}(E_\a 1_\lambda, E_\b 1_\lambda)
\end{equation}
for $\a, \b \in \SSeq$ 
with $\wt(\a) = \wt(\b)$ and
$\lambda \in P$.

Now consider the sesquilinear form on $K_0(\underline{\dot
  \UU}_{q,\pi}(\g))$ defined 
by letting $\langle [X],[Y] \rangle$ be zero if $X, Y$ are 1-morphisms
in $\underline{\dot \UU}_{q,\pi}(\g)$
whose domains or codomains are different, and setting
$$
\langle [X], [Y] \rangle := 
S^{-1} \sum_{n \in \Z}\sum_{a \in \Z/2} \dim \Hom_{\underline{\dot
    \UU}_{q,\pi}(\g)}(Q^n \Pi^a X,Y) q^n \pi^a
$$
if $X$ and $Y$ have the same domain and codomain.
Equivalently,
for 1-morphisms $X, Y:\lambda\rightarrow \mu$
in $\UU_{q,\pi}(\g)$, we have that 
$$
\langle [X], [Y] \rangle =
S^{-1} \dim_{q,\pi} \Hom_{\UU_{q,\pi}(\g)}(X,Y).
$$
Comparing with (\ref{dum}), using also Theorem~\ref{sesqui}, 
we deduce that the forms $\langle-,-\rangle$
on $\dot U_{q,\pi}(\g)_{\LC}$ and
$K_0(\underline{\dot
  \UU}_{q,\pi}(\g))$ 
are intertwined by the homomorphism $\gamma$ in the sense that
$\langle x,y \rangle = \langle \gamma(x), \gamma(y)\rangle$.

Finally, suppose that $x \in \dot U_{q,\pi}(\g)_{\LC}$
is in the kernel of $\gamma$. By the previous paragraph, we 
have that $\langle x, y \rangle = 0$ for all $y \in \dot U_{q,\pi}(\g)_{\LC}$.
In view of the non-degeneracy of the form
$\langle-,-\rangle$ from Theorem~\ref{clark},
this implies that $x = 0$.
\end{proof}

\begin{remark}
The assumption of bar-consistency made in both of 
Theorems~\ref{clark} and \ref{injectivity} is probably
unnecessary. We have included it because we have appealed to \cite[Theorem 5.12]{Clark}, where
it is assumed from the outset. Providing one allows that the canonical
basis should be bar-invariant only up to multiplication by
$\pi$, we expect that the arguments of \cite{Clark} should still be
valid without bar-consistency, but we have not checked this assertion
in detail.
\end{remark}

\begin{example}
Take $\g$ to be odd $\mathfrak{b}_1$ and identify
$P$ with $\Z$ as in the introduction.
Then,
\cite[Proposition 8.3]{EL} implies
that the indecomposable 1-morphisms in $\underline{\dot\UU}_{q,\pi}(\g)$
(up to degree and parity
shift) are 
$$
\{ E^{(a)} F^{(b)} 1_\lambda\:|\:a,b\geq 0,\lambda \in \Z, \lambda \leq b-a\}
\cup
\{F^{(b)} E^{(a)} 1_\lambda\:|\:a,b\geq 0,\lambda \in \Z, \lambda \geq b-a\}.
$$
Also by \cite[Theorem 8.4]{EL}, the Decategorification Conjecture
holds in this case, i.e. $\gamma$ is an isomorphism.
As has already been noted in \cite[Example 4.16]{Clark}, $\gamma$ maps
the classes of the indecomposable 1-morphisms listed above to
the canonical basis for $\dot U_{q,\pi}(\g)$
from \cite[Theorem 6.2]{CW}
(up to multiplication by $\pi$).
\end{example}

\end{document}